\newcommand{\bA}{\mathbb{A}}
\newcommand{\bC}{\mathbb{C}}
\newcommand{\bG}{\mathbb{G}}
\newcommand{\bN}{\mathbb{N}}
\newcommand{\bP}{\mathbb{P}}
\newcommand{\bQ}{\mathbb{Q}}
\newcommand{\bR}{\mathbb{R}}
\newcommand{\bS}{\mathbb{S}}
\newcommand{\bZ}{\mathbb{Z}}
\newcommand{\cA}{\mathcal{A}}
\newcommand{\cF}{\mathcal{F}}
\newcommand{\cL}{\mathcal{L}}
\newcommand{\Ag}{\mathcal{A}_g}
\newcommand{\Hg}{\mathcal{H}_g}
\newcommand{\fS}{\mathfrak{S}}
\newcommand{\gG}{\mathbf{G}}
\newcommand{\gGL}{\mathbf{GL}}
\newcommand{\gGSp}{\mathbf{GSp}}
\newcommand{\gH}{\mathbf{H}}
\newcommand{\gSp}{\mathbf{Sp}}
\newcommand{\rH}{\mathrm{H}}
\newcommand{\rM}{\mathrm{M}}
\newcommand{\rO}{\mathrm{O}}
\newcommand{\ro}{\mathrm{o}}
\newcommand{\Ranexp}{\bR_{\mathrm{an,exp}}}
\newcommand{\Lalg}{\overline{L}}
\newcommand{\Qalg}{\overline{\bQ}}
\newcommand{\Mtmalg}{\overline{M(t_m)}}
\DeclareMathOperator{\Aut}{Aut}
\DeclareMathOperator{\Cay}{Cay}
\DeclareMathOperator{\denom}{denom}
\DeclareMathOperator{\diag}{diag}
\DeclareMathOperator{\End}{End}
\DeclareMathOperator{\Gal}{Gal}
\DeclareMathOperator{\gon}{gon}
\DeclareMathOperator{\Hom}{Hom}
\DeclareMathOperator{\Res}{Res}
\DeclareMathOperator{\Sh}{Sh}
\DeclareMathOperator{\Stab}{Stab}
\DeclareMathOperator{\trdeg}{trdeg}
\newcommand{\ad}{\mathrm{ad}}
\newcommand{\der}{\mathrm{der}}
\newcommand{\sm}{\mathrm{sm}}
\newcommand{\Zar}{\mathrm{Zar}}
\newcommand{\abs}[1]{\left\lvert #1 \right\rvert}
\newcommand{\bs}{\backslash}
\newcommand{\defterm}[1]{\textbf{#1}}
\newtheorem{lemma}{Lemma}[section]
\newtheorem{proposition}[lemma]{Proposition}
\newtheorem{theorem}[lemma]{Theorem}
\newtheorem{corollary}[lemma]{Corollary}
\newtheorem{setup}[lemma]{Set-up}
\newtheorem{conjecture}[lemma]{Conjecture}
\Crefname{conjecture}{Conjecture}{Conjectures} 
\newcounter{constant}
\newcommand{\newC}[1]{%
   \refstepcounter{constant} C_{\theconstant}%
   \ifthenelse{\equal{#1}{*}} { } {%
      \label{C:#1}%
   }%
}
\newcommand{\refC}[1]{C_{\ref*{C:#1}}}
\newcommand{\citeDRfiniteness}{\cite[Conjecture~10.3]{daw-ren:hyp-ax-schanuel}}
\newcommand{\citeDRmainthm}{\cite[Theorem~14.2]{daw-ren:hyp-ax-schanuel}}
\newcommand{\citeDRlgo}{\cite[Conjecture~11.1]{daw-ren:hyp-ax-schanuel}}
\newcommand{\citeDRfielddef}{\cite[Conjecture~12.6]{daw-ren:hyp-ax-schanuel}}
\newcommand{\citeDRuncopied}{\cite[Conjectures 10.3 and~12.7]{daw-ren:hyp-ax-schanuel}}
\title[Unlikely intersections with Hecke translates]{Unlikely intersections with Hecke translates of a special subvariety}
\author{Martin Orr}
\begin{document}

\begin{abstract}
We prove some cases of the Zilber--Pink conjecture on unlikely intersections in Shimura varieties.
Firstly, we prove that the Zilber--Pink conjecture holds for intersections between a curve and the union of the Hecke translates of a fixed special subvariety, conditional on arithmetic conjectures.
Secondly, we prove the conjecture unconditionally for intersections between a curve and the union of Hecke correspondences on the moduli space of principally polarised abelian varieties, subject to some technical hypotheses.
This generalises results of Habegger and Pila on the Zilber--Pink conjecture for products of modular curves.

The conditional proof uses the Pila--Zannier method, relying on a point-counting theorem of Habegger and Pila and a functional transcendence result of Gao.
The unconditional results are deduced from this using a variety of arithmetic ingredients: the Masser--Wüstholz isogeny theorem, comparison between Faltings and Weil heights, a super-approximation theorem of Salehi Golsefidy, and a result on expansion and gonality due to Ellenberg, Hall and Kowalski.

\vskip 1ex
\noindent
\textit{Mathematics Subject Classification (2010):} 11G18, 14G35
\end{abstract}

\renewcommand{\thefootnote}{}
\footnote{Martin Orr, Dept.\ of Mathematics, Imperial College London, South Kensington, London, UK}
\footnote{Email: m.orr@imperial.ac.uk}

\maketitle

\section{Introduction}

The aim of this paper is to prove some cases of the Zilber--Pink conjecture on unlikely intersections \cite{pink:conj}.
Pink's version of this conjecture, a generalisation of the Andr\'e--Oort and Manin--Mumford conjectures, is as follows.

\begin{conjecture} \cite[Conjecture~1.3]{pink:conj} \label{conj:zp}
Let \( S \) be a mixed Shimura variety over~\( \bC \).
Let \( V \) be an algebraic subvariety of \( S \) which is not contained in any proper special subvariety of \( S \).
Then the intersection of \( V \) with the union of all special subvarieties of \( S \) of codimension greater than \( \dim V \) is not Zariski dense in \( V \).
\end{conjecture}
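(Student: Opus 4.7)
The final statement is the full Zilber--Pink conjecture for mixed Shimura varieties, which remains open in general; any plan can realistically aim only at special cases, following the Pila--Zannier strategy. The starting point is to fix a uniformization $\pi \colon X^+ \to S$ of the Shimura variety by its symmetric domain, together with a fundamental set $F$ for the arithmetic lattice acting on $X^+$, and to study the definable set $Z = \pi^{-1}(V) \cap F$ in the o-minimal structure $\Ranexp$. A point of $V$ lying on a special subvariety of codimension exceeding $\dim V$ corresponds to a lift in $Z$ together with an element of $\gG(\bQ)$ conjugating the lifted point into an appropriate subspace cutting out the special subvariety; the aim is to control the height and denominator of this group element polynomially in natural invariants of the point.

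I would then invoke a functional transcendence result of Ax--Schanuel type for mixed Shimura varieties, such as Gao's theorem, to show that the part of $Z$ contributing to atypical intersections either lies in an algebraic subset of $Z$ (which via further analysis would force $V$ into a proper special subvariety, excluded by hypothesis) or consists of sparse transcendental points. The Pila--Wilkie theorem, in the strengthened block form due to Habegger and Pila, then produces a sub-polynomial upper bound of shape $O_\epsilon(T^\epsilon)$ for the number of such relevant rational points of height at most $T$.

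An arithmetic lower bound is needed in the opposite direction: a point of $V$ in an atypical intersection should have a Galois orbit large enough to force many lifts of comparably small height in $Z$, yielding a polynomial lower bound that contradicts the Pila--Wilkie estimate. The main obstacle, and the reason the conjecture is open in full generality, is precisely this arithmetic step: the required uniform large Galois orbit statements and height comparisons remain deeply conditional. In the specific situations treated in this paper, the Masser--W\"ustholz isogeny theorem, comparison between Faltings and Weil heights, and super-approximation together with the Ellenberg--Hall--Kowalski expansion/gonality result should combine to make these lower bounds unconditional, thereby yielding the conjecture in the stated restricted setting.
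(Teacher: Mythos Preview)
The statement in question is a conjecture, not a theorem: the paper does not prove it, and there is no ``paper's own proof'' to compare against. You correctly recognise this at the outset and instead outline the Pila--Zannier strategy for attacking special cases. That outline is accurate and is essentially the architecture the paper follows for its conditional and unconditional results.

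A few points of comparison are worth noting. First, the paper does not attempt the general mixed case at all: it restricts to pure Shimura varieties, to \(V\) a curve, and to intersections only with Hecke translates of a single fixed special subvariety (\cref{conj:zp-translates}). Your sketch is pitched at a higher level of generality than anything the paper actually carries out. Second, the functional transcendence input the paper uses is deliberately lighter than the full hyperbolic Ax--Schanuel: \cref{translate-family-main} is proved from a monodromy-based result of Gao, exploiting the fact that one only needs to control a one-parameter semialgebraic family of translates of a fixed \(X_\gH^+\). Third, your outline omits an ingredient the paper isolates as a separate arithmetic hypothesis: a bound on the field of definition of the relevant special subvarieties (\cref{conj:field-of-definition}), needed so that Galois conjugates of a point remain in the same Hecke translate. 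Finally, the height bound linking complexity \(N(s)\) to the height of the relevant \(\gamma\) (\cref{siegel-height-bound}) comes from the Siegel-set estimates of \cite{orr:siegel-heights}, which you do not mention explicitly.

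None of this is a ``gap'' in the sense of a wrong step, since you are not claiming a proof; but if you were to sharpen the sketch into the paper's actual results, these are the places where the extra structure of the restricted setting is genuinely used.
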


In this paper, we consider cases of \cref{conj:zp} where \( S \) is a pure Shimura variety and \( V \) is an irreducible curve.
When \( V \) is an irreducible curve, the conclusion ``not Zariski dense in \( V \)'' is equivalent to ``finite.''
Instead of the intersection of~\( V \) with the union of all special subvarieties of codimension at least~\( 2 \), we consider its intersection only with Hecke translates of a fixed special subvariety.
Roughly speaking, Hecke translates mean translates of a special subvariety by rational elements of the reductive group attached to the ambient Shimura variety; see section~\ref{sec:definitions} for the precise definition.

Thus we consider the following restricted version of \cref{conj:zp}.

\begin{conjecture} \label{conj:zp-translates}
Let \( S \) be a pure Shimura variety.
Let \( S_\gH \subset S \) be a special subvariety of codimension at least \( 2 \).
Let \( V \subset S \) be an irreducible algebraic curve which is not contained in any proper special subvariety of \( S \).

Then the intersection of \( V \) with the union of all Hecke translates of \( S_\gH \) is finite.
\end{conjecture}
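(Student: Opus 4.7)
The plan is to apply the Pila--Zannier method, generalising the approach of Habegger--Pila for products of modular curves and the Daw--Ren framework for Shimura varieties. Let $\gG$ be the reductive group attached to the Shimura datum of $S$, let $\mathcal{X}$ be the associated Hermitian symmetric domain with uniformization $\pi \colon \mathcal{X} \to S(\bC)$ by an arithmetic congruence subgroup $\Gamma \subset \gG(\bQ)$, and let $\mathcal{X}_\gH \subset \mathcal{X}$ be the pre-image of $S_\gH$ arising from a $\bQ$-subgroup $\gH \subset \gG$. Every Hecke translate of $S_\gH$ is of the form $\pi(g \cdot \mathcal{X}_\gH)$ for some $g \in \gG(\bQ)$. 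I would fix an $\Ranexp$-definable fundamental domain $\mathcal{F}$ for $\Gamma$, so that any intersection point $s \in V \cap \pi(g \cdot \mathcal{X}_\gH)$ lifts to a point $\tilde s \in \mathcal{F} \cap \pi^{-1}(V)$ and an element $g' = \gamma g \in \gG(\bQ)$ with $\tilde s \in g' \cdot \mathcal{X}_\gH$. This packages the problem into a definable set
\[
Z = \{(\tilde s, h) \in (\mathcal{F} \cap \pi^{-1}(V)) \times \gG(\bR) : \tilde s \in h \cdot \mathcal{X}_\gH\},
\]
whose points $(\tilde s, g')$ with $g' \in \gG(\bQ)$ of controlled height encode the Hecke translates meeting $V$.

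Suppose for contradiction that $V \cap \bigcup_{g} \pi(g \cdot \mathcal{X}_\gH)$ is infinite. Each intersection point $s$ provides such a pair in $Z$ with rational second coordinate, and the assumed arithmetic conjecture furnishes a polynomial lower bound of the form $[K(s):K] \gg H(g')^\delta$ for some fixed finitely generated base field $K$ and some $\delta > 0$. Galois conjugation over $K$ then multiplies each intersection point into a large orbit, each element of which yields its own rational element of $\gG(\bQ)$ of comparable height. This produces more rational points in $Z$ of bounded height than the Pila--Wilkie polynomial threshold would allow outside the algebraic part, so by block decomposition $Z$ must contain a connected positive-dimensional semialgebraic subset.

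I would then invoke Gao's hyperbolic Ax--Schanuel theorem applied to such a semialgebraic subset: its image in $S$ would lie in a proper weakly special subvariety containing a positive-dimensional piece of $V$, contradicting the hypothesis that $V$ is not contained in any proper special subvariety. This contradiction forces the original intersection to be finite.

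The main obstacle is the arithmetic input: polynomial Galois-orbit lower bounds for points lying in arbitrary Hecke translates of a fixed special subvariety are not presently known in general, which is precisely why the statement is conditional on the arithmetic conjectures cited in the abstract. The functional transcendence step is also delicate, because the semialgebraic subsets arising from the counting theorem come from a varying family of Hecke translates rather than a single special subvariety; the classical hyperbolic Ax--Lindemann theorem is not sufficient, and one needs the full strength of Gao's hyperbolic Ax--Schanuel, applied via an ``enlargement'' of the pair $(\gG, \gH)$, to rule them out.
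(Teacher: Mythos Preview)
Your overall Pila--Zannier strategy matches the paper's, but there are two genuine gaps.

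First, the functional transcendence step does not work as stated. Hyperbolic Ax--Schanuel (which is due to Mok--Pila--Tsimerman, not Gao; the paper in fact uses only a weaker result from Gao's Ax--Lindemann paper) yields merely that $V$ lies in a proper \emph{weakly special} subvariety, and this does \emph{not} contradict the hypothesis that $V$ is not contained in any proper \emph{special} subvariety: a Hodge generic curve can perfectly well be contained in a proper weakly special subvariety. The paper closes this gap with an additional argument. One records the extra fact that $V$ meets some $\gamma.X_\gH^+$ with $\gamma\in\gG(\bQ)_+$; assuming $V$ is not contained in a proper special subvariety, the weakly special subvariety containing it must correspond to a splitting $\gG^\ad=\gG_1\times\gG_2$ with $W\cap X^+=X_1^+\times\{x_2\}$. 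The rationality of $\gamma$ then forces $\gH$ to surject onto $\gG_2$, and a fibre-dimension count in the compact dual contradicts the codimension-$2$ hypothesis on $X_\gH$. Your suggestion of an ``enlargement'' is too vague to replace this.

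Second, you invoke only one arithmetic conjecture (large Galois orbits), but a second is essential: that each Hecke translate $S_{\gH,\gamma}$ is defined over a number field of degree polynomially bounded in the complexity of $\gamma$. Without this, the $\Aut(\bC/L)$-conjugates of $s$ need not remain on a Hecke translate of comparable complexity, so your claim that ``each element of the orbit yields its own rational element of $\gG(\bQ)$ of comparable height'' is unjustified. You also omit the height bound for rational elements lying in a product of Siegel sets, which is what converts the complexity $N(s)$ into an honest height bound on the representative $\gamma'\in\gG(\bQ)_+$ after translating back into the fundamental domain; without it the Habegger--Pila semi-rational counting theorem cannot be applied with the correct parameter~$T$.
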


We prove \cref{conj:zp-translates} conditional on two arithmetic conjectures (\cref{conj:galois-orbits,conj:field-of-definition}): a large Galois orbits conjecture and a conjecture on the fields of definition of Hecke translates.
This conditional result is similar to a special case of the main theorem of \cite{daw-ren:hyp-ax-schanuel}, but not directly implied by it as we use a different measure of complexity in our conjectures.

Let \( \Ag \) denote the moduli space of principally polarised abelian varieties of dimension~\( g \).
We prove \cref{conj:zp-translates} unconditionally for Hecke correspondences (that is, Hecke translates of the diagonal) in the Shimura variety \( \Ag \times \Ag \), subject to certain technical hypotheses.
To the best of the author's knowledge, this is the first unconditional proof of cases of the Zilber--Pink conjecture for pure Shimura varieties, beyond the cases where the Shimura variety is a product of modular curves \cite{habegger-pila:beyond-ao}, \cite{pila:modular-fermat} or the special subsets have dimension zero (the André--Pink and André--Oort conjectures) \cite{orr:andre-pink}, \cite{tsimerman:galois-bounds-ag}.

The proofs are based on the Pila--Zannier strategy using o-minimality \cite{pila-zannier:manin-mumford}.
The conditional result relies on a point-counting result for ``semi-rational points'' due to Habegger and Pila \cite{habegger-pila:atypical}, a height bound for Siegel sets proved by the author of the present paper \cite{orr:siegel-heights} and a functional transcendence result of Gao \cite{gao:ax-lindemann}.
The unconditional results are deduced by proving certain cases of the arithmetic conjectures needed for the conditional result.
When the curve \( V \) is defined over \( \Qalg \), the Galois orbits conjecture is proved using the Masser--Wüstholz isogeny theorem and Faltings heights; when \( V \) is not defined over \( \Qalg \) we use results on expansion in groups due to Salehi Golsefidy \cite{sg:super-approximation} and an application of expansion to gonality of curves by Ellenberg, Hall and Kowalski \cite{ehk:expanders}.

\subsection{Unconditional results}

We prove two cases of \cref{conj:zp-translates} unconditionally.
In both cases the Shimura variety is \( S = \Ag \times \Ag \) for \( g \geq 2 \), where \( \Ag \) is the moduli space of principally polarised abelian varieties of dimension \( g \) over \( \bC \).
In both unconditional results, we look at intersections between a curve \( V \) and Hecke correspondences, that is, Hecke translates of the diagonal in \( \Ag \times \Ag \).

The two unconditional theorems are as follows.
\Cref{intro:zp-isog-asymmetric} applies when the curve \( V \) is defined over \( \Qalg \), while \cref{intro:zp-isog-transcendental} applies when \( V \) is not defined over~\( \Qalg \).
Each theorem has its own additional restrictions on \( V \) and on the subset of the union of Hecke correspondences whose intersection with \( V \) is controlled.
The term ``asymmetric curve'' in \cref{intro:zp-isog-asymmetric} essentially means that the degrees of the two projections \( V \to \Ag \) are distinct: for a precise definition, see section~\ref{ssec:asymmetric}.

\pagebreak

\begin{theorem} \label{intro:zp-isog-asymmetric} \label{zp-isog-asymmetric}
Let \( g \geq 2 \) be a positive integer.

Let \( \Sigma \) be the set of points \( (s_1, s_2) \in \Ag \times \Ag \) such that the abelian variety \( A_{s_1} \) is isogenous to \( A_{s_2} \) and \( \End A_{s_1} \cong \bZ \).

Let \( V \subset \Ag \times \Ag \) be an asymmetric irreducible algebraic curve defined over \( \Qalg \).
Suppose that \( V \) is not contained in a proper special subvariety of \( \Ag \times \Ag \).

Then \( V \cap \Sigma \) is finite.
\end{theorem}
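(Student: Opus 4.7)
The plan is to deduce this theorem from the conditional form of \cref{conj:zp-translates} (which the paper has just established by the Pila--Zannier method, using Habegger--Pila point-counting, Gao's Ax--Lindemann and the Siegel-height bound of \cite{orr:siegel-heights}), applied with $S_\gH = \Delta \subset \Ag \times \Ag$, and then to discharge the two arithmetic hypotheses in this specific setting. The diagonal is a special subvariety of codimension $g(g+1)/2 \geq 3$, and the union of its Hecke translates is exactly the isogeny locus $\{(s_1, s_2) : A_{s_1} \text{ isogenous to } A_{s_2}\}$, which contains $\Sigma$. The geometric hypotheses on $V$ demanded by the conditional statement (irreducible, $\Qalg$-defined, not in a proper special subvariety, and asymmetric so that Gao's functional transcendence applies in the form required) are supplied by the statement of the theorem.

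The field-of-definition conjecture for Hecke translates of $\Delta$ in $\Ag \times \Ag$ should be routine: such translates are parametrised by $\bQ$-rational double cosets in $\gGSp_{2g}(\bA_f)$, and by Shimura reciprocity they are defined over number fields whose degree is polynomial in the complexity of the correspondence (a suitable power of the associated isogeny degree). I would verify this by an adelic computation with the reciprocity law on~$\Ag$.

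The main step is the large Galois orbits conjecture. Fix $(s_1, s_2) \in V \cap \Sigma$, let $d$ be the minimal degree of an isogeny $A_{s_1} \to A_{s_2}$, and let $L$ be a number field of definition of $V$. The complexity of the Hecke translate of $\Delta$ at $(s_1, s_2)$ is polynomial in $d$, so what is needed is a bound $D := [L(s_1, s_2):L] \gg d^{\epsilon}$ for some $\epsilon > 0$. Now $A_{s_1}$ is defined over a field of degree $O(D)$, and because $\End A_{s_1} \cong \bZ$ the Masser--Wüstholz isogeny theorem applies with constants depending only on $g$: it yields $d \leq C_1 \, \max\bigl(h_F(A_{s_1}),\, D\bigr)^\kappa$, where $h_F$ denotes the Faltings height. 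Since $V$ is $\Qalg$-defined of fixed geometric degree, the Weil height of $s_1 \in \Ag$ is $O(\log D)$ by Northcott-type considerations, and the standard logarithmic comparison between Weil and Faltings heights on $\Ag$ then gives $h_F(A_{s_1}) = O(\log D)$. Substituting yields $d = O(D^\kappa)$, i.e.\ $D \gg d^{1/\kappa}$, which is the required polynomial lower bound.

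The hardest part is this Galois orbit estimate, and it is essential to use both hypotheses of the theorem: the condition $\End A_{s_1} \cong \bZ$ is what allows Masser--Wüstholz to be invoked with uniform constants in $g$, and the hypothesis $V \subset (\Ag \times \Ag)_{\Qalg}$ is what lets us treat $s_1$ as an algebraic point on $\Ag$ and bound its Weil height by $O(\log D)$; without either, the chain of inequalities above breaks. A subsidiary technical point I will need to address is matching the complexity measure attached to Hecke correspondences in the formulation of the Galois-orbits conjecture with the minimal isogeny degree produced by Masser--Wüstholz, but this should amount to straightforward polynomial comparisons in the $\gGSp_{2g}$ setting.
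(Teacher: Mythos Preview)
Your overall architecture is right: reduce to the conditional Pila--Zannier theorem, verify the field-of-definition hypothesis for Hecke correspondences in $\Ag\times\Ag$, and prove the large Galois orbits bound via Masser--W\"ustholz. But the heart of your Galois-orbit argument contains a genuine gap, and you misattribute the roles of the two special hypotheses.

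The gap is the step ``the Weil height of $s_1\in\Ag$ is $O(\log D)$ by Northcott-type considerations.'' There is no such bound: a point of bounded degree on a fixed $\Qalg$-curve can have arbitrarily large height (already on $\bP^1$). The paper does \emph{not} bound $h_F(A_{s_1})$ in terms of the degree $D$ at all. Instead it bounds $h_F(A_{s_1})$ in terms of the \emph{isogeny degree} $d=N'(s)$, and this is precisely where the asymmetry enters. One compares the two pullbacks $\cL_i=\bar p_i^{\,*}\cL_{BB}$ on $\bar V$: since $\cL_1^{\otimes d_2}$ and $\cL_2^{\otimes d_1}$ are algebraically equivalent, the height machine gives $(d_2/d_1)\,h_F(s_1)-h_F(s_2)=o(h_F(s_1))$, while Faltings' bound on height change under isogeny gives $|h_F(s_1)-h_F(s_2)|\leq C\log d$. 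Because $d_1\neq d_2$, these combine to $h_F(s_1)\leq C'\log d$. Feeding this into Masser--W\"ustholz yields $d\leq C''\,(D\cdot\log d)^{\kappa}$, hence $D\gg d^{1/\kappa'}$. So asymmetry is used in the height bound, not (as you wrote) in the functional-transcendence step, which works for any Hodge-generic curve.

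You also misidentify why $\End A_{s_1}\cong\bZ$ is needed. Masser--W\"ustholz already has constants depending only on $g$, with no endomorphism hypothesis. The issue is rather that Masser--W\"ustholz bounds the minimal degree of \emph{any} isogeny $A_{s_1}\to A_{s_2}$, whereas the complexity $N(s)$ in the conditional theorem is defined via Hecke correspondences, i.e.\ via \emph{polarised} isogenies. When $\End A_{s_1}\cong\bZ$ every isogeny is automatically polarised, so the minimal isogeny degree coincides with $N(s)$; without this, the bound produced by Masser--W\"ustholz need not control $N(s)$. This is exactly the ``subsidiary technical point'' you flagged at the end, but it is not a straightforward polynomial comparison---it is the reason for the hypothesis.
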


\begin{theorem} \label{intro:zp-isog-transcendental} \label{zp-isog-transcendental}
Let \( b, g \geq 2 \) be positive integers.

Let \( \Sigma \) be the set of points \( (s_1, s_2) \in \Ag \times \Ag \) for which there exists a polarised isogeny \( (A_{s_1}, \lambda_{s_1}) \to (A_{s_2}, \lambda_{s_2}) \) whose degree is not divisible by the \( b \)-th power of any prime number.

Let \( V \subset \Ag \times \Ag \) be an irreducible algebraic curve which is not contained in a proper special subvariety of \( \Ag \times \Ag \).
Suppose that there exists an algebraically closed field \( K \subset \bC \) such that:
\begin{enumerate}[(i)]
\item The Zariski closure of \( p_1(V) \) is defined over \( K \) (where \( p_1 \) is the first projection \( \Ag \times \Ag \to \Ag \)).
\item \( V \) is not defined over \( K \).
\end{enumerate}

Then \( V \cap \Sigma \) is finite.
\end{theorem}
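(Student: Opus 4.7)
The plan is to deduce \cref{intro:zp-isog-transcendental} from the conditional version of \cref{conj:zp-translates} announced in the introduction, applied with \( S = \Ag \times \Ag \) and \( S_\gH \subset S \) the diagonal \( \Delta \cong \Ag \). In this setting the Hecke translates of \( \Delta \) are (components of) the Hecke correspondences on \( \Ag \times \Ag \), so the set \( \Sigma \) is contained in their union, restricted to isogeny degrees not divisible by any \( b \)-th power. The conditional theorem requires two arithmetic inputs for these Hecke translates: the field-of-definition conjecture \citeDRfielddef{} and the large Galois orbits conjecture \citeDRheight. The former is essentially free, since every Hecke correspondence on \( \Ag \times \Ag \) is defined over \( \bQ \) and hence its field of definition is bounded independently of complexity; the heart of the argument is the latter.

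For the Galois orbits bound, let \( P = (s_1, s_2) \in V \cap \Sigma \) come from a polarised isogeny \( A_{s_1} \to A_{s_2} \) of degree \( N \); we must show \( [L(P):L] \gg N^\delta \) for some \( \delta > 0 \) and a field \( L \) over which \( V \) is defined. The strategy is the expansion-plus-gonality input alluded to in the introduction. Setting \( C := \overline{p_1(V)} \) (defined over \( K \) by hypothesis), I would form, for each relevant \( N \), the pullback \( C_N \to C \) of the level-\( N \) cover \( \Ag[N] \to \Ag \). The monodromy of \( C \to \Ag \) sits in \( \gSp_{2g}(\bZ) \); the hypothesis that \( V \) is not contained in a proper special subvariety of \( \Ag \times \Ag \) forces this monodromy to be Zariski-dense in \( \gSp_{2g} \), and restricting to \( b \)-power-free levels places us in the setting of Salehi Golsefidy's super-approximation theorem \cite{sg:super-approximation}. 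The resulting expander property, combined with Ellenberg--Hall--Kowalski \cite{ehk:expanders}, gives \( \gon(C_N) \gg N^\delta \). A point \( P \) of isogeny degree \( N \) lifts to a point of \( C_N \), so its degree over \( L(C) \) is at least \( \gon(C_N) \).

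This is where the hypothesis that \( V \) is not \( K \)-definable while \( p_1(V) \) is gets used: the \( \Aut(\bC/K) \)-orbit of \( V \) is infinite, so \( s_1 \) has positive transcendence degree over \( K \) and is \( K \)-generic in \( C \). Consequently a lower bound for \( [L(C)(P):L(C)] \) translates (up to the fibre degree of \( V \to C \)) into the Galois-orbit lower bound required by \citeDRheight. The main obstacle I anticipate is twofold: aligning the complexity measure of \cite{daw-ren:hyp-ax-schanuel} with the isogeny degree \( N \), and carefully invoking super-approximation---one needs to check that the \( b \)-th-power restriction baked into \( \Sigma \) is exactly what allows the Salehi Golsefidy hypothesis to be met uniformly along the tower, and that the monodromy of \( C \to \Ag \) really is as large as one hopes under the non-containment assumption on \( V \). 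Once these ingredients are in place, the theorem follows from the conditional result together with the unconditional verification of its two arithmetic inputs.
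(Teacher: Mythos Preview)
Your proposal is correct and follows the paper's approach essentially exactly: reduce to the conditional \cref{zp-translates-conditional}, dispose of the field-of-definition input via \cref{ag:hecke-components-bound}, and establish the Galois-orbits input by combining Salehi Golsefidy's super-approximation with the Ellenberg--Hall--Kowalski gonality bound applied to covers of \( C = \overline{p_1(V)} \), then converting gonality into a lower bound on the Galois orbit (the paper's \cref{galois-gonality-bound}). Two small points to tighten: the paper handles the degenerate case \( p_{1|V} \) constant separately (then every point of \( V \cap \Sigma \) is a \( K \)-point, hence there are finitely many); and the transcendence step uses that \( s \), not \( s_1 \), lies outside \( K \) for all but finitely many \( s \in V \cap \Sigma \)---this is because an irreducible curve not defined over \( K \) has only finitely many \( K \)-points, whereas your claim that \( s_1 \) is \( K \)-generic in \( C \) need not hold for individual \( s \).
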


Hecke correspondences in \( \Ag \times \Ag \) have a natural interpretation as the set of points \( (s_1, s_2) \) such that there exists a polarised isogeny between the associated principally polarised abelian varieties \( (A_{s_1}, \lambda_{s_1}) \) and \( (A_{s_2}, \lambda_{s_2}) \).
This interpretation via isogenies is essential to the proof of \cref{intro:zp-isog-asymmetric} and so this proof is fundamentally restricted to \( \Ag \times \Ag \) or at least Shimura varieties of Hodge type, but is only used incidentally in the proof of \cref{intro:zp-isog-transcendental}.
\Cref{intro:zp-isog-transcendental} is restricted to \( \Ag \times \Ag \) because it relies on concrete calculations in the group~\( \gGSp_{2g}(\bQ) \).

\subsection{Previous results}

Previous results with analogous hypotheses were proved for the Shimura variety \( \cA_1^3 \) (where \( \cA_1 \) is the moduli space of elliptic curves): \cite[Theorem~1]{habegger-pila:beyond-ao} is analogous to \cref{intro:zp-isog-asymmetric} and \cite[Theorem~1.4]{pila:modular-fermat} is analogous to \cref{intro:zp-isog-transcendental}.
These previous results had to work with subvarieties of \( \cA_1^3 \) rather than \( \cA_1^2 \) so that there exist positive-dimensional special subvarieties of codimension at least~\( 2 \),
as is required for the intersections with a curve to be ``unlikely'' in the sense of the Pink's conjecture.
On the other hand, when \( g \geq 2 \), Pink's conjecture applies to Hecke correspondences in \( \Ag \times \Ag \) because they have codimension at least~\( 2 \).

The only special subvarieties of \( \cA_1^3 \) are intersections of subvarieties defined by one of the following two conditions:
\begin{enumerate}
\item the projection onto one of the copies of \( \cA_1 \) is a fixed special point.
\item the projection onto two of the copies of \( \cA_1 \) is a Hecke correspondence.
\end{enumerate}
Consequently, Habegger and Pila were able to prove the full Zilber--Pink conjecture for curves in \( \cA_1^3 \) satisfying the appropriate technical hypotheses.
On the other hand, when \( g \geq 2 \), \( \Ag \times \Ag \) contains special subvarieties which cannot be described in terms of just special points and Hecke correspondences (for example, subvarieties of \( \cA_2 \times \cA_2 \) which project onto quaternionic Shimura curves) and our method does not apply to these more general special subvarieties.

\Cref{intro:zp-isog-asymmetric} also implies the following special case of the André--Pink conjecture.
A more general version of this theorem was previously proved in \cite{orr:andre-pink}.

\begin{theorem} \label{andre-pink-qalg}
Let \( g \geq 2 \) be a positive integer.

Let \( s \in \Ag(\Qalg) \) be a point which is not contained in a proper special subvariety of \( \Ag \).
Let \( \Sigma \) be the set of points \( t \in \Ag \) such that \( A_t \) is isogenous to \( A_s \).

Let \( V \subset \Ag \) be an irreducible algebraic curve defined over \( \Qalg \)
which is not contained in a proper special subvariety of \( \Ag \).

Then \( V \cap \Sigma \) is finite.
\end{theorem}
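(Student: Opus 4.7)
The plan is to deduce \cref{andre-pink-qalg} from \cref{intro:zp-isog-asymmetric} by embedding $V$ as a curve in the product $\Ag \times \Ag$. Define $V' := \{s\} \times V \subset \Ag \times \Ag$; this is an irreducible algebraic curve defined over $\Qalg$. Under the bijection $V \to V'$, $v \mapsto (s,v)$, the set $V \cap \Sigma$ of \cref{andre-pink-qalg} corresponds to the set of points $(s,v) \in V'$ with $A_v$ isogenous to $A_s$. Since $s$ is not contained in any proper special subvariety of $\Ag$, the abelian variety $A_s$ is Hodge generic and in particular $\End A_s \cong \bZ$; hence this set coincides with $V' \cap \Sigma'$, where $\Sigma'$ denotes the ``isogenous with trivial endomorphism ring'' locus appearing in \cref{intro:zp-isog-asymmetric}. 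It therefore suffices to verify the two remaining hypotheses of that theorem for $V'$.

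The asymmetry of $V'$ is immediate: the first projection $p_1 \colon V' \to \Ag$ is constant with image $\{s\}$, while $p_2 \colon V' \to \Ag$ is an isomorphism onto $V$, so the two projections have different degrees.

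The main step is to show that $V'$ is not contained in any proper special subvariety of $\Ag \times \Ag$. Suppose, for contradiction, that $Y \subsetneq \Ag \times \Ag$ is such a special subvariety. Since the image of a special subvariety under a Shimura morphism is special, and since $p_1(Y)$ contains $s$ while $p_2(Y)$ contains $V$, the hypotheses on $s$ and $V$ force $p_1(Y) = p_2(Y) = \Ag$. Let $\gH$ denote the generic Mumford--Tate group of $V'$: a connected algebraic subgroup of $\gGSp_{2g} \times \gGSp_{2g}$ which, by the previous sentence, surjects onto $\gGSp_{2g}$ in each factor. Applying Goursat's lemma to the adjoint group $\gH^{\ad}$ and using simplicity of $\gSp_{2g}^{\ad}$ for $g \geq 2$, we find that $\gH^{\ad}$ is either the full product $(\gSp_{2g}^{\ad})^2$ or (conjugate to) the diagonal copy of $\gSp_{2g}^{\ad}$. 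In the diagonal case the smallest special subvariety of $\Ag \times \Ag$ containing $V'$ is a Hecke translate of the diagonal, which would force $A_v$ to be isogenous to $A_s$ for every $v \in V$; since the isogeny class of $A_s$ is countable, this contradicts $V$ being a positive-dimensional irreducible curve. Hence $\gH^{\ad} = (\gSp_{2g}^{\ad})^2$, so no proper special subvariety of $\Ag \times \Ag$ contains $V'$. \Cref{intro:zp-isog-asymmetric} now gives that $V' \cap \Sigma'$ is finite, and consequently so is $V \cap \Sigma$.

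The anticipated difficulty lies in the Mumford--Tate/Goursat analysis above: some care is needed with the centre of $\gGSp_{2g}$, which plays no role at the level of associated Shimura varieties, and one must also verify that the degenerate projection $p_1$ falls within the precise definition of ``asymmetric'' used in \cref{intro:zp-isog-asymmetric}.
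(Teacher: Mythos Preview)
Your proposal is correct and follows exactly the approach the paper indicates: the paper's entire proof is the single sentence ``\cref{andre-pink-qalg} can be deduced from \cref{intro:zp-isog-asymmetric} by applying it to the curve \( \{ s \} \times V \subset \Ag \times \Ag \),'' and you have supplied the verification of the hypotheses (asymmetry, $\End A_s\cong\bZ$, and Hodge genericity of $\{s\}\times V$) that the paper leaves implicit. Your Goursat argument for the last point is sound, and your closing caveats about the centre and the degree convention are exactly the points one has to check (both work out with the paper's definition of degree via $\cL_{BB}$, which gives degree $0$ for the constant projection).
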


\Cref{andre-pink-qalg} can be deduced from \cref{intro:zp-isog-asymmetric} by applying it to the curve \( \{ s \} \times V \subset \Ag \times \Ag \).
However \cite[Theorem~1.2]{orr:andre-pink} is more general than \cref{andre-pink-qalg} in several ways: it allows \( s \) and \( V \) to be defined over~\( \bC \) rather than \( \Qalg \)
(it is possible to use \cref{intro:zp-isog-transcendental} to prove some but not all of the cases in which \( s \) and \( V \) are not defined over \( \Qalg \)),
it allows \( s \) to be contained in a proper special subvariety,
and it applies to all curves \( V \) which are not weakly special subvarieties of \( \Ag \) (a weaker condition than not being contained in a proper special subvariety).

\subsection{Outline of paper}

In section~\ref{sec:definitions}, we define the concepts related to Shimura varieties which we shall use, along with some miscellaneous notation.
In section~\ref{sec:translates}, we prove \cref{conj:zp-translates}, conditional on arithmetic conjectures (\cref{conj:galois-orbits,conj:field-of-definition}).
\Cref{sec:hecke-correspondences} contains some results on Hecke correspondences in \( \Ag \times \Ag \) which are used in the proofs of both \cref{intro:zp-isog-asymmetric,intro:zp-isog-transcendental}, including proving \cref{conj:field-of-definition} in this case.
Finally sections \ref{sec:asymmetric} and~\ref{sec:transcendental} prove the large Galois orbits conjectures required for \cref{intro:zp-isog-asymmetric,intro:zp-isog-transcendental} respectively.

\subsection*{Acknowledgements}

The work which led to this paper began in discussions with Andrei Yafaev, to whom I am very grateful.
I would like to thank Christopher Daw, Philipp Habegger, Jonathan Pila and Jinbo Ren for useful discussions.
I am grateful to Christopher Daw and Jinbo Ren for sharing drafts of their preprint~\cite{daw-ren:hyp-ax-schanuel}, which was in preparation at the same time as this paper.
I thank the referee for their detailed reading of the paper and helpful comments.

Work on this paper was funded by European Research Council grant 307364 and by EPSRC grant EP/M020266/1.

\section{Definitions and notation} \label{sec:definitions}

We briefly recall various definitions related to Shimura varieties, in order to establish the terminology and notation which we use in this paper.
At the end of the section, we also include some miscellaneous definitions.

\subsection{Shimura varieties}

Except in the proof of \cref{ag:hecke-components-bound}, we will only work with a single geometrically connected component of a Shimura variety, which we call a ``Shimura variety component.''
We therefore omit the complexities of Deligne's adelic definition of Shimura varieties.

A \defterm{Shimura datum} is a pair \( (\gG, X) \) where \( \gG \) is a connected reductive \( \bQ \)-algebraic group and \( X \) is a \( \gG(\bR) \)-conjugacy class in \( \Hom(\bS, \gG_\bR) \) satisfying axioms 2.1.1.1--2.1.1.3 of~\cite{deligne:shimura-varieties}.
Here \( \bS \) denotes the Deligne torus \( \Res_{\bC/\bR} \bG_m \).
These axioms imply that \( X \) is a finite disjoint union of Hermitian symmetric domains \cite[Corollaire~1.1.17]{deligne:shimura-varieties}.

We select a connected component \( X^+ \subset X \).
Let \( \gG(\bR)_+ \) denote the stabiliser of~\( X^+ \) in \( \gG(\bR) \), and let \( \gG(\bR)^+ \) denote the identity connected component of \( \gG(\bR) \).
Write \( \gG(\bQ)_+ = \gG(\bQ) \cap \gG(\bR)_+ \) and \( \gG(\bQ)^+ = \gG(\bQ) \cap \gG(\bR)^+ \).

Let \( \rho \colon \gG \to \gGL_{n,\bQ} \) be a faithful representation.
For each positive integer \( N \), let
\[ \Gamma(N) = \{ \gamma \in \gG(\bQ)_+ : \rho(\gamma) \in \gGL_n(\bZ) \text{ and } \rho(\gamma) \equiv I_n \bmod N \}. \]
A \defterm{congruence subgroup} of \( \gG(\bQ)_+ \) is a subgroup which contains some \( \Gamma(N) \) as a subgroup of finite index.
The groups \( \Gamma(N) \) depend on the choice of the representation~\( \rho \), but the notion of congruence subgroup does not.

If \( \Gamma \) is a congruence subgroup of \( \gG(\bQ)_+ \), then
by \cite{baily-borel:compactification}, the quotient space \( S = \Gamma \bs X^+ \) has a canonical structure as a quasi-projective variety over~\( \bC \).
This variety~\( S \) is a connected component of the Shimura variety \( \Sh_K(\gG, X) \).
We write \( \pi \colon X^+ \to S \) for the uniformisation map.

According to Deligne's theory of canonical models (\cite{deligne:shimura-varieties}, completed in \cite{milne:canonical-models} and~\cite{borovoi:canonical-models}),
the Shimura variety~\( \Sh_K(\gG, X) \) has a canonical model over a number field.
Hence the connected component \( S \) also has a model over a number field.
We use the phrase \defterm{Shimura variety component} to mean a variety over a number field whose extension to~\( \bC \) is of the form \( \Gamma \bs X^+ \) and whose structure over a number field comes from the theory of canonical models, as described above.

\subsection{Special and weakly special subvarieties}

A \defterm{sub-Shimura datum} of \( (\gG, X) \) is a Shimura datum \( (\gH, X_\gH) \) such that \( \gH \subset \gG \) and \( X_\gH \subset X \).
Pick a connected component \( X_\gH^+ \) of \( X_\gH \) such that \( X_\gH^+ \subset X^+ \).
As a consequence of \cite[Proposition~1.15]{deligne:travaux-de-shimura}, \( \pi(X_\gH^+) \) is an algebraic subvariety of \( S \).
We call a set \( X_\gH^+ \) of this form a \defterm{pre-special subset} of \( X^+ \) and we call its image \( \pi(X_\gH^+) \) a \defterm{special subvariety} of \( S \).

There are several equivalent definitions of weakly special subvarieties of \( S \): as totally geodesic varieties \cite{moonen:linearity-I}, in terms of Shimura morphisms \cite{pink:conj} or via the following concrete but somewhat convoluted definition \cite{uy:characterization}.
Let \( (\gH, X_\gH) \subset (\gG, X) \) be a sub-Shimura datum such that the adjoint group \( \gH^\ad \) splits as a direct product \( \gH_1 \times \gH_2 \).
Then \( X_\gH^+ \) also splits as a direct product \( X_1^+ \times X_2^+ \), such that there are Shimura data \( (\gH_1, X_1) \) and \( (\gH_2, X_2) \) where \( X_1^+ \) is a connected component of \( X_1 \) and \( X_2^+ \) is a connected component of \( X_2 \).
For each point \( x_2 \in X^+_2 \), we call the fibre \( X^+_1 \times \{ x_2 \} \subset X_\gH^+ \subset X^+ \) a \defterm{pre-weakly special subset} of \( X^+ \).
Its image \( \pi(X^+_1 \times \{ x_2 \}) \) is an algebraic subvariety of \( S \),
and we call subvarieties of this form \defterm{weakly special subvarieties} of \( S \).

Note that, in the above definition, either \( \gH_1 \) or \( \gH_2 \) may be the trivial group.
If \( \gH_1 \) is trivial, then the resulting weakly special subvariety is a point (and this shows that every single-point subset of \( S \) is a weakly special subvariety).
If \( \gH_2 \) is trivial, then the resulting weakly special subvariety is \( S \) itself.

\subsection{Hecke translates and Hecke correspondences}

Let \( S_\gH = \pi(X_\gH^+) \) be a special subvariety of \( S \).
A \defterm{Hecke translate} of \( S_\gH \) is a subvariety of \( S \) of the form \( S_{\gH,\gamma} = \pi(\gamma.X_\gH^+) \) where \( \gamma \in \gG(\bQ)_+ \).
Observe that \( S_{\gH,\gamma} \) is itself a special subvariety, associated with the sub-Shimura datum \( (\gamma \gH \gamma^{-1}, \gamma.X_\gH) \subset (\gG, X) \).
The condition \( \gamma \in \gG(\bQ) \) is essential here: if \( \gamma \in \gG(\bR) \setminus \gG(\bQ) \), then \( \gamma \gH \gamma^{-1} \) need not be defined over \( \bQ \) and \( \pi(\gamma.X_\gH^+) \) is usually not a locally closed subset of \( S \) (even in the complex topology).

Consider the special case where \( \gH \) is the diagonal subgroup of \( \gG \times \gG \).
For \( \gamma \in \gG(\bQ)_+ \), \( (1, \gamma).X_\gH^+ \) is equal to the graph of the action of~\( \gamma \) on \( X^+ \).
We write
\[ T_\gamma = \pi((1, \gamma).X_\gH^+). \]
Subvarieties of \( S \times S \) of this form are called \defterm{Hecke correspondences}.


\subsection{Other definitions}

Throughout this paper, whenever we refer to polarisations, isogenies or endomorphisms of abelian varieties, we mean polarisations, isogenies or endomorphisms defined over an algebraically closed field.
When we talk about points of a variety, unless otherwise specified, we mean \( \bC \)-points.

If \( (A, \lambda) \) and \( (B, \mu) \) are principally polarised abelian varieties, then a \defterm{polarised isogeny} \( f \colon A \to B \) is an isogeny such that \( f^* \mu = n\lambda \) for some \( n \in \bZ \).

By the word \defterm{definable}, we mean ``definable in the o-minimal structure \( \Ranexp \).''

Given positive integers \( b \) and \( n \), we say that \( n \) is \defterm{\( b \)-th-power-free} if it is not divisible by the \( b \)-th power of any prime number.

Given a rational matrix \( \gamma \in \rM_n(\bQ) \) with entries \( \gamma_{ij} = a_{ij}/b_{ij} \) (each entry written as a fraction in lowest terms), we write
\begin{align*}
    \denom \gamma 
 &= \max_{1 \leq i,j \leq n} \abs{b_{ij}},
\\  \rH(\gamma)
 &= \max_{1 \leq i,j \leq n} \max(\abs{a_{ij}}, \abs{b_{ij}}).
\end{align*}

\section{Conditional Zilber--Pink for Hecke translates} \label{sec:translates}

The aim of this section is to prove \cref{conj:zp-translates} (Pink's conjecture for intersections between a curve and Hecke translates of a fixed special subvariety), conditional on a large Galois orbits conjecture and a conjecture on the fields of definition of Hecke translates.
The main theorem resembles \citeDRmainthm, but uses a different definition of complexity adapted to our special case of Hecke translates of a fixed special subvariety.

In stating the theorem and the conjectures on which it depends we shall use the following set-up.

\begin{setup} \label{zp-translates-setup}
Let \( (\gG, X) \) be a Shimura datum and let \( S \) be an associated Shimura variety component.

Let \( (\gH, X_\gH) \subset (\gG, X) \) be a sub-Shimura datum.
For each \( \gamma \in \gG(\bQ)_+ \), let \( S_{\gH,\gamma} \) denote the special subvariety \( \pi(\gamma.X_\gH^+) \subset S \).

Let \( \rho \colon \gG \to \gGL_{n,\bQ} \) be a faithful representation.
For each \( \gamma \in \gG(\bQ)_+ \), define
\[ N(\gamma) = \max(\denom \rho(\gamma), \, \abs{\det \rho(\gamma) \cdot \denom \rho(\gamma)^n}). \]

Let \( \Omega \) be a subset of \( \gG(\bQ)_+ \) and let \( \Sigma \) be a subset of \( \bigcup_{\gamma \in \Omega} S_{\gH,\gamma} \).
For each point \( s \in \Sigma \), define
\[ N(s) = \min \{ N(\gamma) : \gamma \in \Omega \text{ such that } s \in S_{\gH,\gamma} \}. \]
\end{setup}

The representation~\( \rho \) in \cref{zp-translates-setup} is an auxiliary device needed to define the complexity \( N(\gamma) \) (which can often be interpreted as a modification of \( \abs{\det \rho(\gamma)} \), taking into account denominators).
If we replace \( \rho \) by another faithful representation of~\( \gG \), the new function \( N(\gamma) \) is polynomially bounded with respect to the old \( N(\gamma) \) and vice versa.
Hence \cref{conj:galois-orbits,conj:field-of-definition} do not depend on the choice of \( \rho \), except that the constants will change.

The sets \( \Omega \) and \( \Sigma \) are treated as input data, rather than simply fixing \( \Omega = \gG(\bQ)_+ \) and \( \Sigma = \bigcup_{\gamma \in \gG(\bQ)_+} S_{\gH,\gamma} \), in order to make it clear that if we can prove \cref{conj:galois-orbits,conj:field-of-definition} for certain subsets \( \Omega \subset \gG(\bQ)_+ \) and \( \Sigma \subset \bigcup_{\gamma \in \gG(\bQ)_+} S_{\gH,\gamma} \) then we can deduce a corresponding partial version of \cref{conj:zp-translates}.
This flexibility is used in our applications to Hecke correspondences in \( \Ag \times \Ag \): in \cref{zp-isog-asymmetric}, \( \Sigma \) is restricted to points where the associated abelian varieties have endomorphism ring~\( \bZ \), while in \cref{zp-isog-transcendental}, \( \Omega \) only contains matrices with \( b \)-th-power-free determinant.
Note that the definition of \( N(s) \) depends on the set~\( \Omega \).

\begin{conjecture} \label{conj:galois-orbits}
In the situation of \cref{zp-translates-setup}, let \( V \subset S \) be an irreducible algebraic curve which is not contained in any proper special subvariety of \( S \).

Let \( L \) be a finitely generated field of characteristic zero over which \( V \) is defined.
There exist constants \( \newC{zp-translates-cond-multiplier}, \newC{zp-translates-cond-exponent} > 0 \) such that for all points \( s \in V \cap \Sigma \),
\[ \# (\Aut(\bC/L) \cdot s)  \geq  \refC{zp-translates-cond-multiplier} \, N(s)^{\refC{zp-translates-cond-exponent}}. \]
\end{conjecture}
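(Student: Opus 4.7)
The plan is to attack \cref{conj:galois-orbits} in two quite different regimes according to the nature of the field of definition of $V$, since the techniques available differ radically.

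\textbf{Arithmetic regime.} Suppose $L$, and hence $V$, can be taken inside $\Qalg$. A point $s \in V \cap \Sigma$ then lies in some number field $L(s)$ and $\#(\Aut(\bC/L) \cdot s) = [L(s):L]$. For $s \in S_{\gH,\gamma}$, the auxiliary element $\gamma$ encodes a geometric relation: in the $\Ag \times \Ag$ setting with $\gH$ the diagonal, $s = (s_1,s_2)$ satisfies $s \in T_\gamma$ precisely when there is a polarised isogeny $A_{s_1} \to A_{s_2}$ whose degree is polynomially controlled by $N(\gamma)$. I would: (i) show that the minimal $N(\gamma)$ realising $s$ is polynomially comparable to the minimal polarised isogeny degree between $A_{s_1}$ and $A_{s_2}$; (ii) invoke the Masser--Wüstholz isogeny theorem to force any such isogeny degree to be bounded above by a polynomial in $[L(s):L]$ and the Faltings height of $A_{s_1}$; (iii) bound the Faltings height along $V$ using height comparison results for Siegel sets, so that it does not grow fast enough to spoil the estimate. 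Combining the three steps converts a lower bound on $N(s)$ into a polynomial lower bound on $[L(s):L]$.

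\textbf{Transcendental regime.} Suppose $V$ is not defined over $\Qalg$, but does satisfy hypotheses like those of \cref{zp-isog-transcendental}. The idea is to exploit that points of $V \cap \Sigma$ are sections of a family of correspondences, and to use an expansion-based gonality lower bound. For each $\gamma \in \Omega$, the Hecke correspondence $S_{\gH,\gamma}$ cuts out a finite cover of (the closure of) $p_1(V)$; as $N(\gamma)$ grows, these covers become increasingly ramified and connected. I would use Salehi Golsefidy's super-approximation theorem to exhibit expansion in the relevant congruence quotients of the arithmetic group, apply the Ellenberg--Hall--Kowalski consequence that expander families of covers have gonality growing polynomially with their degree, and conclude that a section of such a cover defined over $L$ exists only in large Galois orbits, giving $\#(\Aut(\bC/L) \cdot s) \gg N(s)^{\refC{zp-translates-cond-exponent}}$.

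\textbf{Main obstacle.} The key difficulty is that \cref{conj:galois-orbits} is genuinely open in the stated generality; both routes above rely on deep arithmetic input with no known analogue outside Shimura data of Hodge type. In the arithmetic regime the dictionary between $N(\gamma)$ and isogeny degree is specific to $\gGSp_{2g}$, and controlling the Faltings height of $A_{s_1}$ on $V$ without losing the polynomial bound is delicate at points where $\End A_{s_1}$ is non-generic — which is exactly why \cref{zp-isog-asymmetric} restricts $\Sigma$ to the $\End = \bZ$ locus. In the transcendental regime the crux will be to translate expansion-theoretic information in $\gGSp_{2g}(\bQ)$ into a lower bound for the degree of the cover of $p_1(V)$ associated with $\gamma$ in terms of $N(\gamma)$, which seems to force concrete matrix calculations in $\gGSp_{2g}$ and is where I expect most of the technical work to live. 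I would not expect a proof in a more general Shimura datum with the tools currently available.
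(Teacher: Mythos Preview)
Your proposal is not really a proof but a strategic sketch, and that is appropriate: the statement is a \emph{conjecture}, and the paper does not prove it in general.  What the paper does is establish the two special cases you outline, in Sections~5 and~6, under additional hypotheses (asymmetric $V$ over $\Qalg$ with $\End A_{s_1}\cong\bZ$, respectively $V$ transcendental over the field of definition of $p_1(V)$ with $b$-th-power-free isogeny degrees).  Your two regimes and their respective toolkits---Masser--W\"ustholz plus Faltings heights in the arithmetic case, Salehi~Golsefidy super-approximation plus Ellenberg--Hall--Kowalski gonality in the transcendental case---match the paper's approach closely, and your identification of the obstacles (the $\End=\bZ$ restriction, the need for explicit $\gGSp_{2g}$ calculations to compare $[\Gamma:\Gamma_\gamma]$ with $N(\gamma)$) is accurate.

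One correction to your step~(iii) in the arithmetic regime: the Faltings height bound along $V$ does \emph{not} come from height comparison results for Siegel sets.  The Siegel-set height bound of \cite{orr:siegel-heights} is used elsewhere in the paper (in the o-minimal point-counting argument of Section~3), not in the Galois-orbits step.  The actual mechanism (\cref{faltings-height-bound}) is: pull back the Baily--Borel line bundle along the two projections $\bar p_1,\bar p_2\colon\bar V\to\bar\Ag$; use Faltings' comparison $h_F = h_{BB}+\rO(\log h_{BB})$ and his bound $\abs{h_F(A_{s_1})-h_F(A_{s_2})}\ll\log N'(s)$ for isogenous abelian varieties; and then---crucially---exploit the \emph{asymmetry} hypothesis $d_1\neq d_2$ to solve $(d_2/d_1)h_F(s_1)-h_F(s_2)=\ro(h_F(s_1))$ for $h_F(s_1)$ in terms of $\log N'(s)$.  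Without asymmetry this step fails, which is why the paper's arithmetic case is restricted to asymmetric curves.  You should replace the Siegel-set reference with this Weil-height-machine argument and make the asymmetry hypothesis explicit.
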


\begin{conjecture} \label{conj:field-of-definition}
In the situation of \cref{zp-translates-setup},
for every \( \kappa > 0 \), there exists a constant \( \newC{translate-components-multiplier} \)
such that, for every \( \gamma \in \Omega \), \( S_{\gH,\gamma} \) is defined over a number field of degree at most \( \refC{translate-components-multiplier} \, N(\gamma)^\kappa \).
\end{conjecture}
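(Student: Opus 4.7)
The plan is to bound the degree of the field of definition of \( S_{\gH,\gamma} \) by controlling its Galois orbit inside the set of all Hecke translates of \( S_\gH \). By Deligne's theory of canonical models, the special subvariety \( S_{\gH,\gamma} \) --- associated to the sub-Shimura datum \( (\gamma \gH \gamma^{-1}, \gamma.X_\gH^+) \) --- is automatically defined over some number field, so the problem is to make this quantitative in \( N(\gamma) \).

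First I would analyse the Galois orbit of \( S_{\gH,\gamma} \) under \( \Gal(\Qalg/E) \), where \( E \) is the reflex field of \( (\gG, X) \). Because conjugation by \( \gamma \in \gG(\bQ) \) commutes with the Galois action on cocharacters, the reflex field of \( (\gamma \gH \gamma^{-1}, \gamma.X_\gH) \) coincides with that of \( (\gH, X_\gH) \), contributing only a fixed extension independent of \( \gamma \). The remaining permutation action on Hecke translates is governed by reciprocity laws for Shimura varieties: concretely, the Galois conjugates of \( S_{\gH,\gamma} \) should appear as \( S_{\gH,\gamma'} \) with \( \gamma' \) running over a set indexed by a class group attached to the centre or centraliser of \( \gH \) in \( \gG \).

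The second step is to bound this class group. One route gives a polynomial bound for free: the Galois conjugates lie among the single cosets in the double coset \( K \gamma K \) for a suitable compact open subgroup \( K \), and standard estimates on Hecke correspondence degrees bound the number of single cosets by \( C \, N(\gamma)^d \) for some fixed \( d \). To sharpen this into the sub-polynomial bound \( O_\kappa(N(\gamma)^\kappa) \) required by the conjecture, one must exploit the fact that the Galois orbit is confined to the cosets parameterised by the class group above, whose order should grow sub-polynomially in its discriminant by a Brauer--Siegel-type estimate.

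The main obstacle is exactly this last improvement from polynomial to sub-polynomial growth, which in full generality seems to require GRH or an effective Brauer--Siegel / Chebotarev input for an unbounded family of number fields. For the application in this paper --- Hecke correspondences in \( \Ag \times \Ag \) --- the difficulty should disappear entirely, since \( \Ag \) and the correspondences \( T_\gamma \) are defined over \( \bQ \) and the field-of-definition question becomes a direct combinatorial count; I would expect the author to handle this concrete case in \cref{sec:hecke-correspondences} rather than attempt the conjecture in general.
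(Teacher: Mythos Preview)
This statement is a \emph{conjecture}, and the paper does not attempt to prove it in general. Your final paragraph is exactly right: the author proves only the special case needed for the applications --- Hecke correspondences in \( \Ag \times \Ag \) --- and does so in \cref{ag:hecke-components-bound}, which is stronger than the conjecture requires: every \( T_\gamma \) is defined over \( \bQ \) itself, not just over a number field of controlled degree. So your assessment of the situation is accurate.

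It is worth noting that the paper's argument for the special case is somewhat in the spirit of your class-group heuristic, though more direct. Rather than bounding a Galois orbit among Hecke translates, the author observes that \( T_\gamma \) is a connected component of the Shimura variety \( \Sh_{K_\gamma}(\gGSp_{2g},\Hg^\pm) \), whose reflex field is \( \bQ \), and then checks that this Shimura variety is connected. Using Deligne's description of components together with the symplectic elementary divisor theorem, one computes \( \nu(K_\gamma)=\hat\bZ^\times \), so the component set is \( \bQ^\times_+\backslash\bA_f^\times/\hat\bZ^\times \), i.e.\ the class group of \( \bQ \), which is trivial. This is a version of your ``class group attached to the centre'' idea, but for \( \gGSp_{2g} \) the centre is \( \bG_m \) and the relevant class group is literally the class group of \( \bQ \), so no Brauer--Siegel input is needed.

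Your sketch for the general conjecture (reflex field unchanged under conjugation, Galois orbit inside a double coset, then a sub-polynomial class-number bound) is a reasonable heuristic, and you correctly identify the obstruction: the passage from a polynomial bound on the Hecke degree to the sub-polynomial bound \( O_\kappa(N(\gamma)^\kappa) \) is not available unconditionally in general. The paper simply leaves this as a conjecture outside the \( \Ag \) setting.
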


\begin{theorem} \label{zp-translates-conditional}
In the situation of \cref{zp-translates-setup}, let \( V \subset S \) be an irreducible algebraic curve which is not contained in any proper special subvariety of \( S \).

Suppose that \( \dim X_\gH \leq \dim X - 2 \).
Suppose also that \cref{conj:galois-orbits,conj:field-of-definition} hold for the chosen \( (\gG, X) \), \( (\gH, X_\gH) \), \( S \), \( \rho \), \( \Omega \), \( \Sigma \) and \( V \).

Then \( V \cap \Sigma \) is finite.
\end{theorem}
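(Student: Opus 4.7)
The strategy is the Pila--Zannier o-minimality method, implemented so that all Hecke translates of $S_\gH$ are treated simultaneously. Fix a semi-algebraic fundamental set $\cF \subset X^+$ for $\Gamma$ and a Siegel set $\fS \subset \gG(\bR)^+$, and define
\[
  D = \bigl\{\, (x, g) \in \cF \times \fS \;:\; \pi(x) \in V \text{ and } g^{-1}.x \in X_\gH^+ \,\bigr\},
\]
which is definable in $\Ranexp$. Any point $s \in V \cap S_{\gH,\gamma} \cap \Sigma$ lifts to a point of $D$ in which $g \in \gG(\bQ)_+$ lies in the coset $\gamma \cdot \gH(\bQ)_+$; by the Siegel set height bound of \cite{orr:siegel-heights}, such a $g$ satisfies $\rH(\rho(g)) \leq C \, N(s)^{C'}$ for constants depending only on $\gG$, $\gH$ and $\rho$.

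Next, the two arithmetic conjectures are combined to produce a super-polynomial number of points of $D$ with rational $g$-coordinate of small height, sharing a common Hecke translate. Assume for contradiction that $V \cap \Sigma$ is infinite, and take $s \in V \cap \Sigma$ of arbitrarily large complexity $N(s)$. \Cref{conj:galois-orbits} gives a Galois orbit of $s$ over $L$ of size at least $\refC{zp-translates-cond-multiplier}\, N(s)^{\refC{zp-translates-cond-exponent}}$, while \cref{conj:field-of-definition} bounds the number of Galois conjugates of $S_{\gH,\gamma}$ by $\refC{translate-components-multiplier} \, N(s)^\kappa$ for any $\kappa > 0$. Choosing $\kappa < \refC{zp-translates-cond-exponent}$, pigeonhole places at least $\gtrsim N(s)^{\refC{zp-translates-cond-exponent} - \kappa}$ Galois conjugates of $s$ on a single Hecke translate $S_{\gH, \gamma'}$, and since $V$ is defined over $L$ these all lie in $V \cap S_{\gH,\gamma'}$. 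Each yields a point of $D$ with rational $g$-coordinate in $\fS \cap \gamma' \gH(\bQ)_+$ of height polynomial in $N(s)$.

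The Habegger--Pila counting theorem for semi-rational points on definable sets \cite{habegger-pila:atypical} then applies: the super-polynomial count of rational $g$-coordinates against the polynomial height bound forces a positive-dimensional connected semi-algebraic block $B \subset D$ through one of these points. The projection of $B$ to $\cF$ lies in $\pi^{-1}(V)$, and along $B$ the condition $g^{-1}.x \in X_\gH^+$ realises $\pi(B)$ inside a positive-dimensional family of Hecke translates of $S_\gH$. Applying Gao's hyperbolic Ax--Lindemann theorem \cite{gao:ax-lindemann} to the maximal complex-algebraic subset of $\pi^{-1}(V)$ containing this block shows that $\pi(B)$ lies in a weakly special subvariety $W \subset S$ of positive dimension. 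The formation of $W$ from a positive-dimensional family of Hecke translates of the proper special subvariety $S_\gH$ (using $\dim X_\gH \leq \dim X - 2$) forces the smallest special subvariety of $S$ containing $W$ to be proper in $S$; since $V \subset W$, this contradicts the hypothesis that $V$ is not contained in any proper special subvariety.

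The step I expect to be most delicate is the last one: arranging the block $B$ produced by Habegger--Pila's counting theorem (which lives in a definable set involving a Siegel set and is only guaranteed to be semi-algebraic) so that it identifies a genuine complex-algebraic subset of $\pi^{-1}(V)$ to which Gao's Ax--Lindemann theorem can be applied, and then ensuring that the resulting weakly special subvariety $W$ really is contained in a proper special subvariety rather than merely realising $V$ as Hodge-generic inside a non-special product structure. Bookkeeping the passage between the rational coset representative $\gamma'$, its expression as an element of the Siegel set $\fS$, and the group-theoretic content of the family of Hecke translates extracted from $B$ is the technical heart of the argument.
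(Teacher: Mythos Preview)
Your proposal is correct and follows essentially the same route as the paper: the same definable set (up to cosmetic variations such as placing $g$ in a Siegel set rather than in all of $\gG(\bR)_+$), the same combination of the two arithmetic conjectures to manufacture many semi-rational points of bounded height (your pigeonhole is equivalent to the paper's passage to the extension $L'$ over which $S_{\gH,\gamma}$ is defined), the Habegger--Pila semi-rational counting theorem, and Gao's Ax--Lindemann for the functional transcendence. The step you rightly flag as delicate is isolated in the paper as \cref{translate-family-main}: the passage from the real semialgebraic path to a complex-algebraic object is made via the compact dual~$\check X$, and the upgrade from a proper \emph{weakly} special to a proper \emph{special} subvariety containing~$V$ uses only the existence of a single rational $\gamma$ with $\pi^{-1}(V)\cap\gamma.X_\gH^+\neq\emptyset$ together with a fibre-dimension count (\cref{dim-Y-upper-bound}).
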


The hypothesis \( \dim X_\gH \leq \dim X - 2 \) in \cref{zp-translates-conditional} ensures that intersections between \( V \) and \( S_{\gH,\gamma} \) are unlikely in the sense of Pink's conjecture (\cref{conj:zp}).

\Cref{conj:galois-orbits} is a large Galois orbits conjecture similar to others commonly used in unlikely intersections arguments.
It is analogous to \citeDRlgo{} but using a different complexity function.
Similarly, \cref{conj:field-of-definition} is analogous to \citeDRfielddef.
\Cref{zp-translates-conditional} has no conditions analogous to \citeDRuncopied{}, because the height bound we require has been proved in~\cite{orr:siegel-heights} and this is sufficient to deduce the analogue of \citeDRfiniteness{} for our setting.

The proof of \cref{zp-translates-conditional} is based on \cite[section~5]{habegger-pila:beyond-ao}, which proves a similar result in which \( S \) is a power of a modular curve.
The proof has two parts, both relying on o-minimality.
The first part is a functional transcendence argument (written geometrically), the second part is point counting using a strong variant of the Pila--Wilkie theorem.

\subsection{Intersections with a family of translates}

Let \( S \) be a Shimura variety and let \( S_\gH \subset S \) be a special subvariety of codimension at least~\( 2 \).
We show that if an algebraic curve \( V \subset S \) has positive-dimensional intersection with a semialgebraic family of translates of \( S_\gH \), then \( V \) is contained in a proper weakly special subvariety of \( S \).
If in addition \( V \) intersects a Hecke translate of \( S_\gH \), then \( V \) is contained in a proper special subvariety (the effect of this additional hypothesis resembles the fact that if a weakly special subvariety contains a special point, then it is special).

\begin{proposition} \label{translate-family-main}
Let \( S \) be a Shimura variety component associated with the Shimura datum \( (\gG, X) \).
Let \( \pi \colon X^+ \to S \) be the uniformisation map.

Let \( (\gH, X_\gH) \subset (\gG, X) \) be a sub-Shimura datum such that \( \dim X_\gH \leq \dim X - 2 \).
Let \( X_\gH^+ \) be a connected component of \( X_\gH \) which is contained in \( X^+ \).

Let \( V \subset S \) be an irreducible algebraic curve.
Consider the following hypotheses:
\begin{enumerate}[(a)]
\item There exists a connected semialgebraic set \( A \subset \gG(\bR)_+ \) of dimension at most~\( 1 \) such that
\( \pi^{-1}(V) \cap A.X_\gH^+ \)
is uncountable.
\item There exists \( \gamma \in \gG(\bQ)_+ \) such that \( \pi^{-1}(V) \cap \gamma.X_\gH^+ \) is non-empty.
\end{enumerate}

We can draw the following conclusions:
\begin{enumerate}[(i)]
\item If (a) holds, then \( V \) is contained in a proper weakly special subvariety of \( S \).
\item If (a) and (b) hold, then \( V \) is contained in a proper special subvariety of \( S \).
\end{enumerate}
\end{proposition}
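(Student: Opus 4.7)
The plan is to combine an o-minimality argument (to convert the uncountable intersection in (a) into a positive-complex-dimensional analytic intersection) with the hyperbolic Ax--Lindemann theorem for Shimura varieties (Klingler--Ullmo--Yafaev, and Gao more generally). Part~(ii) is then deduced from (i) by incorporating the rationality of~$\gamma$.

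For part~(i): let $B \subset \gG_\bC$ be the complex Zariski closure of $A$ in the algebraic group~$\gG_\bC$. Since $A$ is semi-algebraic of real dimension at most~$1$, its real Zariski closure has real algebraic dimension at most~$1$, and its complexification has complex dimension at most~$1$; hence $\dim_\bC B \leq 1$. Inside the compact dual $\check X$ the set $B \cdot \check X_\gH$ is complex algebraic of dimension at most $\dim X_\gH + 1 \leq \dim X - 1$, so $Y \mathrel{:=} (B \cdot \check X_\gH) \cap X^+$ is a proper complex analytic subset of $X^+$ containing $A \cdot X_\gH^+$. Fix a semi-algebraic fundamental set $\cF \subset X^+$ for $\Gamma$ on which $\pi|_\cF$ is definable in $\Ranexp$, e.g.\ a Siegel set. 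By hypothesis~(a), $\pi^{-1}(V) \cap Y$ is uncountable; since $\Gamma$ is countable, some $\Gamma$-translate of $\cF$ meets it in an uncountable definable subset, which has positive real dimension by the standard o-minimal fact that uncountable definable sets have positive dimension. Thus $\pi^{-1}(V) \cap Y$ has positive real dimension, and being the intersection of two complex analytic subsets of $X^+$ it is itself complex analytic with positive \emph{complex} dimension. Since $\pi^{-1}(V)$ has complex dimension~$1$, an entire irreducible complex analytic component $C$ of $\pi^{-1}(V)$ is contained in~$Y$. Applying Gao's hyperbolic Ax--Lindemann theorem to an irreducible complex algebraic component $W$ of $B \cdot \check X_\gH$ containing $C$, the Zariski closure $\overline{\pi(W \cap X^+)}^\Zar$ is a weakly special subvariety of $S$ of dimension $\leq \dim W < \dim S$ containing $V = \pi(C)$, proving~(i).

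For part~(ii): given the weakly special $V^* \supset V$ from~(i), we use hypothesis~(b) by modifying $A$ to a related semi-algebraic set $\tilde A$ of real dimension $\leq 1$ whose complex Zariski closure contains a suitable $\Gamma$-conjugate $\gamma'$ of~$\gamma$, chosen so that a lift of a point of $V \cap S_{\gH,\gamma}$ lies simultaneously on the analytic component~$C$ and on $\gamma' \cdot X_\gH^+$. Carrying the Ax--Lindemann argument through for the enlarged algebraic set, one extracts an irreducible algebraic component of the complex Zariski closure containing both $C$ and $\gamma' \cdot X_\gH^+$, and hence a weakly special subvariety containing both $V$ and the special subvariety $S_{\gH,\gamma}$. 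Writing this weakly special subvariety as $\pi(X_1^+ \times \{x_2\})$ in the standard product decomposition, the containment $S_{\gH,\gamma} \subset \pi(X_1^+ \times \{x_2\})$ forces the projection of a lift of $S_{\gH,\gamma}$ to $X_2^+$ to equal $\{x_2\}$; this projection is itself a pre-special subset (image of a pre-special orbit under a projection of Shimura data), so $x_2$ is pre-special, and hence the enveloping weakly special subvariety is in fact a proper special subvariety of~$S$ containing~$V$, proving~(ii).

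The most delicate step is the upgrade from mere uncountability to positive \emph{complex} dimension. Positive real dimension follows from uncountability via o-minimality, but complex structure is recovered only because we work with the complex (rather than real) Zariski closure of~$A$, which makes $Y$ complex analytic. A second obstruction, specific to part~(ii), is the engineering of the modified $\tilde A$ whose complex Zariski closure contains a well-chosen $\Gamma$-conjugate $\gamma'$ while preserving both the dimension bound $\leq 1$ and the uncountable intersection with $\pi^{-1}(V)$; this step relies on the existence of a common lift and on the irreducibility structure of the complex Zariski closure, and is where I expect the argument to require the most care.
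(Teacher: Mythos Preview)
Your argument for part~(i) is essentially the paper's: pass to the complex Zariski closure $B$ of $A$, bound $\dim(B \cdot \check X_\gH) \leq \dim X - 1$, trap an analytic branch $C$ of $\pi^{-1}(V)$ inside it, and invoke Gao's theorem. One correction: Ax--Lindemann does not assert that $\overline{\pi(W \cap X^+)}^{\Zar}$ is weakly special for an \emph{arbitrary} algebraic $W$; the form the paper uses (Gao) says that $\overline{C}^{\Zar,\check X} \cap X^+$ is pre-weakly special. Since $C \subset W$ and $W$ is Zariski closed, $\overline{C}^{\Zar} \subset W$, so the dimension bound still transfers and the conclusion follows. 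With this fix, part~(i) matches the paper.

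Part~(ii), however, has a genuine gap. Your plan is to engineer an enlarged $\tilde A$ so that the resulting weakly special subvariety contains both $V$ and $S_{\gH,\gamma}$, and then deduce that $x_2$ is pre-special. But the Ax--Lindemann output is $\overline{C}^{\Zar,\check X}$, which depends only on $C$ and not on whatever ambient algebraic set you place $C$ in; enlarging $A$ to $\tilde A$ does not change it. There is thus no mechanism forcing $\gamma' \cdot X_\gH^+ \subset \overline{C}^{\Zar}$. And even granting a proper weakly special $X_1^+ \times \{x_2\}$ containing $V$ and meeting a lift of $S_{\gH,\gamma}$, all you obtain is $x_2 \in p_2(\gamma' \cdot X_\gH^+)$; this pre-special set need not be a point, so $x_2$ need not be pre-special.

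The paper's route for~(ii) is quite different. Assume for contradiction that $V$ lies in no proper special subvariety; then the weakly special from~(i) has the form $W \cap X^+ = X_1^+ \times \{x_2\}$ where $X^+ = X_1^+ \times X_2^+$ comes from a splitting $\gG^\ad = \gG_1 \times \gG_2$, and $x_2$ is Hodge-generic in $X_2^+$. Hypothesis~(b) yields $x_2 \in p_2(\gamma \cdot X_\gH^+)$, and Hodge-genericity of $x_2$ forces $p_2(\gamma \cdot X_\gH^+) = X_2^+$, i.e.\ $\gH$ surjects onto $\gG_2$. This surjectivity feeds a \emph{refined fibre-dimension count}: for $Y = \{(\beta,x) \in B \times \check X_\gH : \beta \cdot x \in W\}$, each fibre $Y_\beta$ is a fibre of the surjection $p_2|_{\check X_\gH} \colon \check X_\gH \to \check X_2$ and hence has dimension $\dim X_\gH - \dim X_2 \leq \dim X_1 - 2$, giving $\dim Y \leq \dim X_1 - 1$. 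But the paper also shows $W$ equals the Zariski closure of $\phi(Y)$, so $\dim W \leq \dim Y < \dim X_1 = \dim W$, a contradiction. The missing idea in your proposal is precisely this sharper bound on $\dim Y$ coming from the surjectivity $\gH \twoheadrightarrow \gG_2$; the paper never tries to show that $x_2$ is special.
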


Part~(i) of \cref{translate-family-main} follows from the hyperbolic Ax--Schanuel conjecture, proved by Mok, Pila and Tsimerman \cite{mpt:hyp-ax-schanuel}, but we give here a proof using a simpler functional transcendence result from \cite{gao:ax-lindemann}.
Our main interest (in order to prove to prove \cref{zp-translates-conditional}) is in part~(ii).
We do not use part~(i) in the proof of part~(ii), but we thought it is still useful to include part~(i) in order to make the roles of the two conditions (a) and~(b) clearer.

We prove \cref{translate-family-main} by constructing a complex algebraic subset of the compact dual of \( X^+ \) which contains an irreducible component of \( \pi^{-1}(V) \).
In order to obtain a weakly special subvariety from this algebraic set, we use \cite[Theorem~8.1]{gao:ax-lindemann} which is based on monodromy arguments.
Parts (i) and (ii) of \cref{translate-family-main} are then proved by controlling the dimension of this algebraic set.

Let \( \check{X} \) denote the compact dual of the Hermitian symmetric domain~\( X^+ \).
There is an action of \( \gG(\bC) \) on \( \check{X} \) given by a morphism of complex algebraic varieties
\[ \phi \colon \gG(\bC) \times \check{X} \to \check{X}. \]

Let \( A \) be the semialgebraic set which appears in hypothesis~(a) of \cref{translate-family-main}.
Let \( B \) be the Zariski closure of \( A \) inside \( \gG(\bC) \).
Because \( A \) is contained in a real algebraic set of (real) dimension at most \( 1 \), the (complex) dimension of \( B \) is at most \( 1 \).

Since \( A \subset B \), hypothesis~(a) implies that \( \pi^{-1}(V) \cap \phi(B \times X_\gH^+) \) is uncountable.
The complex analytic set \( \pi^{-1}(V) \) has only countably many irreducible components, so we can choose an irreducible component \( V_1 \subset \pi^{-1}(V) \) such that \( V_1 \cap \phi(B \times X_\gH^+) \) is uncountable.

Let \( W \) denote the Zariski closure of \( V_1 \) inside \( \check{X} \).
By \cite[Theorem~8.1]{gao:ax-lindemann}, \( W \cap X^+ \) is a pre-weakly special subset of \( X^+ \).

Let \( Y \) denote the complex algebraic subset of \( \gG(\bC) \times \check{X} \) given by
\[ Y = \{ (\beta, x) \in B \times \check{X}_\gH : \phi(\beta, x) \in W \}. \]

\begin{lemma} \label{W-phi-Y}
The Zariski closure of \( \phi(Y) \) in \( \check{X} \) is equal to \( W \).
\end{lemma}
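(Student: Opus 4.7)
The plan is to establish the two inclusions $\overline{\phi(Y)}^{\mathrm{Zar}} \subseteq W$ and $W \subseteq \overline{\phi(Y)}^{\mathrm{Zar}}$ separately. The first is immediate from the definition of $Y$: by construction $\phi(Y) \subseteq W$, and $W$ is Zariski closed in $\check{X}$, so the Zariski closure of $\phi(Y)$ lies in $W$.

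For the reverse inclusion, I would first show that $\phi(Y)$ contains the uncountable set $V_1 \cap \phi(B \times X_\gH^+)$ that was used to select $V_1$. If $v$ lies in this intersection, then $v = \phi(\beta, x)$ for some $\beta \in B$ and $x \in X_\gH^+ \subseteq \check{X}_\gH$, and $v \in V_1 \subseteq W$, so $(\beta, x) \in Y$ and $v \in \phi(Y)$. Hence $\phi(Y)$ contains an uncountable subset of $V_1$.

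Next I would argue that any uncountable subset of $V_1$ is automatically Zariski dense in $W$. Since $V$ is an algebraic curve on $S$ and $\pi \colon X^+ \to S$ has discrete fibres, $V_1$ is an irreducible one-dimensional complex analytic subset of $X^+$, hence of the open subset $X^+ \subseteq \check{X}$. If a Zariski-closed subset $Z \subseteq \check{X}$ meets $V_1$ in an uncountable set, then $Z \cap V_1$ is an analytic subset of the irreducible one-dimensional analytic variety $V_1$ which cannot be proper (a proper analytic subset of $V_1$ would be discrete, hence at most countable). Therefore $V_1 \subseteq Z$. Applying this with $Z = \overline{\phi(Y)}^{\mathrm{Zar}}$ and taking Zariski closures gives $W = \overline{V_1}^{\mathrm{Zar}} \subseteq \overline{\phi(Y)}^{\mathrm{Zar}}$.

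The whole argument is essentially bookkeeping with the definitions, so I do not anticipate any serious obstacle; the one point worth being explicit about, perhaps as a small preliminary lemma, is the elementary fact in the final paragraph that an uncountable subset of an irreducible one-dimensional complex analytic variety is Zariski dense in its Zariski closure.
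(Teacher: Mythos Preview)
Your proof is correct and follows essentially the same route as the paper: both establish $\phi(Y)\subseteq W$ directly from the definition, then show $V_1\cap\phi(B\times X_\gH^+)\subseteq\phi(Y)$ and use the fact that an uncountable subset of the irreducible analytic curve $V_1$ forces $V_1\subseteq\overline{\phi(Y)}^{\Zar}$, whence $W=\overline{V_1}^{\Zar}\subseteq\overline{\phi(Y)}^{\Zar}$. You are simply more explicit than the paper about the last step (the paper just says ``since $V_1$ is an irreducible complex analytic curve, it follows that\ldots''), which is fine.
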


\begin{proof}
From the definition of \( Y \), we see that \( \phi(Y) \subset W \).
Since \( W \) is Zariski closed, the Zariski closure of \( \phi(Y) \) is also contained in~\( W \).

We have
\[ V_1 \cap \phi(B \times X_\gH^+)  \subset  W \cap \phi(B \times \check{X}_\gH)  =  \phi(Y) \]
so our choice of \( V_1 \) implies that \( V_1 \cap \phi(Y) \) is uncountable.
Since \( V_1 \) is an irreducible complex analytic curve, it follows that \( V_1 \) is contained in the Zariski closure of \( \phi(Y) \).
Therefore \( W = V_1^\Zar \) is contained in the Zariski closure of \( \phi(Y) \).
\end{proof}

The proof of \cref{translate-family-main}(i) is now immediate.

\begin{proof}[Proof of \cref{translate-family-main}(i)]
\( V \) is contained in \( \pi(W) \), which is a weakly special subvariety of \( S \).
So it suffices to show that \( \dim W < \dim S = \dim X \).

Since \( \phi \) is a morphism of algebraic varieties and using \cref{W-phi-Y}, we have \( \dim W \leq \dim Y \).
Hence using the hypothesis that \( \dim X_\gH \leq \dim X - 2 \), we get
\[ \dim W  \leq  \dim Y  \leq  \dim B + \dim \check{X}_\gH  \leq  1 + (\dim X - 2)  =  \dim X - 1.
\qedhere
\]
\end{proof}

We will use \cref{W-phi-Y} to prove part~(ii) of \cref{translate-family-main} (we do not use part~(i) in the proof of part~(ii)).
First we use hypothesis~(b) to obtain a more refined bound for \( \dim Y \).

Assume for contradiction that \( V \) is not contained in any proper special subvariety of \( S \).
It follows that the pre-weakly special set \( W \cap X^+ \) is not contained in any proper pre-special subset of \( X^+ \).
Consequently, using the notation from the definition of pre-weakly special subsets (applied to \( W \cap X^+ \)), we must have \( (\gH, X_\gH) = (\gG, X) \).
Hence there is a direct product decomposition \( X^+ = X_1^+ \times X_2^+ \) such that
\[ W \cap X^+ = X_1^+ \times \{ x_2 \} \]
for some point \( x_2 \in X_2^+ \).
Write \( \gG^\ad = \gG_1 \times \gG_2 \) for the associated decomposition of the adjoint group.


\begin{lemma} \label{dim-Y-upper-bound}
In the setting of \cref{translate-family-main}, assume that (a) and (b) hold, and that \( V \) is not contained in a proper special subvariety of~\( S \).

Then \( \dim Y < \dim X_1^+ \).
\end{lemma}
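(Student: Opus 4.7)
The plan is to exploit hypothesis~(b), together with the assumption that \(V\) is not contained in a proper special subvariety, to force the image of \(\gH\) in the adjoint factor \(\gG_2\) to be all of \(\gG_2\); the lemma would then follow by a clean fibre-dimension computation for the morphism \(\check X_\gH \to \check X_2\).

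First I would identify \(W\) explicitly: since \(V_1 \subset W \cap X^+ = X_1^+ \times \{x_2\}\) and \(W\) is the irreducible Zariski closure of \(V_1\) in \(\check X\), one obtains \(W = \check X_1 \times \{x_2\}\) under the decomposition \(\check X = \check X_1 \times \check X_2\) induced by \(\gG^\ad = \gG_1 \times \gG_2\). Writing \(p_2 \colon \check X \to \check X_2\) for the projection and \(\beta^\ad = (\beta_1, \beta_2) \in \gG_1(\bC) \times \gG_2(\bC)\) for the adjoint components of \(\beta\), the description of \(Y\) becomes
\[ Y = \{ (\beta, x) \in B \times \check X_\gH : \beta_2 . p_2(x) = x_2 \}. \]

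Next I would apply hypothesis~(b). Because the irreducible components of \(\pi^{-1}(V)\) are the \(\Gamma\)-translates of \(V_1\), I may replace \(\gamma\) by \(\delta_0^{-1}\gamma\) for a suitable \(\delta_0 \in \Gamma\) and assume that \(V_1 \cap \gamma.X_\gH^+\) contains a point \(\gamma.x_0\), with \(x_0 \in X_\gH^+\). Applying \(p_2\) and using \(\gamma.x_0 \in W\) gives \(\gamma_2.p_2(x_0) = x_2\). Letting \(\gH_2\) denote the image of \(\gH\) under \(\gH \hookrightarrow \gG \twoheadrightarrow \gG_2\), we have \(p_2(X_\gH^+) = X_{\gH_2}^+\). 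The key claim is \(\gH_2 = \gG_2\): if instead \(\gH_2 \subsetneq \gG_2\), then since \(\gamma_2 \in \gG_2(\bQ)_+\), the Hecke translate \(\gamma_2.X_{\gH_2}^+\) is a proper pre-special subset of \(X_2^+\) containing \(x_2\), so \(X_1^+ \times \gamma_2.X_{\gH_2}^+\) is a proper pre-special subset of \(X^+\) containing \(W \cap X^+\); this would force \(V \subset \pi(X_1^+ \times \gamma_2.X_{\gH_2}^+)\), contradicting our hypothesis.

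Finally, once \(\gH_2 = \gG_2\) is established, the projection \(p_2|_{\check X_\gH} \colon \check X_\gH \to \check X_2\) is surjective and equivariant under the transitive left action of \(\gH(\bC)\), where \(\gH(\bC)\) acts on \(\check X_2\) through the now-surjective map \(\gH(\bC) \twoheadrightarrow \gG_2(\bC)\). Consequently all its fibres are \(\gH(\bC)\)-conjugate and share the common dimension \(\dim X_\gH - \dim X_2^+\). Projecting \(Y \to B\), each fibre is contained in a single \(p_2\)-fibre of \(\check X_\gH\) over \(\beta_2^{-1}.x_2\), so
\[ \dim Y \leq \dim B + (\dim X_\gH - \dim X_2^+) \leq 1 + (\dim X - 2) - \dim X_2^+ = \dim X_1^+ - 1 < \dim X_1^+. \]

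The main obstacle I expect is the second step — converting the geometric hypothesis that \(V\) lies in no proper special subvariety into the group-theoretic statement \(\gH_2 = \gG_2\). The delicate point is verifying that \(X_1^+ \times \gamma_2.X_{\gH_2}^+\) is genuinely pre-special, i.e., comes from a sub-Shimura datum of \((\gG, X)\) rather than merely of the adjoint datum; concretely this reduces to checking that the preimage in \(\gG\) of \(\gG_1 \times \gamma_2 \gH_2 \gamma_2^{-1} \subset \gG^\ad\), enlarged by the centre of \(\gG\), is a reductive \(\bQ\)-subgroup whose associated \(X^+\) equals \(X_1^+ \times \gamma_2.X_{\gH_2}^+\). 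This is routine but deserves care, because the splitting of \(\gG^\ad\) need not lift to a splitting of \(\gG\).
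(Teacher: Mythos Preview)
Your proposal is correct and follows essentially the same route as the paper: use hypothesis~(b) to show that \(\gH\) surjects onto \(\gG_2\), then compute the fibre dimension of \(p_{2|\check X_\gH}\) to bound \(\dim Y\). The paper phrases the key step as ``\(p_2(\gamma.X_\gH^+)=X_2^+\)'' rather than ``\(\gH_2=\gG_2\)'', but these are equivalent and the fibre computation is identical; the lifting issue you flag (that \(X_1^+\times\gamma_2.X_{\gH_2}^+\) must come from a sub-Shimura datum of \((\gG,X)\) rather than merely of the adjoint datum) is likewise left implicit in the paper.
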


\begin{proof}
Let \( p_2 \) denote the projection \( X^+ \to X_2^+ \).

We have \( p_2(V_1) = \{ x_2 \} \) and thus \( p_2(\pi^{-1}(V)) \subset p_2(\Gamma.x_2) \).
Therefore hypothesis~(b) from \cref{translate-family-main} implies that \( x_2 \in p_2(\gamma.X_\gH^+) \) for some \( \gamma \in \gG(\bQ)_+ \).
Because \( \gamma \in \gG(\bQ)_+ \), \( p_2(\gamma.X_\gH^+) \) is a pre-special subvariety of \( X_2^+ \).
Since \( V \) is not contained in any proper special subvariety of \( S \), we deduce that \( p_2(\gamma.X_\gH^+) = X_2^+ \).
It follows that \( \gamma \gH \gamma^{-1} \) projects surjectively onto \( \gG_2 \), and hence also \( \gH \) projects surjectively onto \( \gG_2 \).

For each \( \beta \in B \), consider
\begin{align*}
    Y_\beta
  & =  \{ x \in \check{X}_\gH : (\beta, x) \in Y \}
\\& =  \{ x \in \check{X}_\gH : \phi(\beta, x) \in \check{X}_1 \times \{ x_2 \} \}
\\& =  \{ x \in \check{X}_\gH : p_2(\beta.x) = x_2 \}
\\& =  \{ x \in \check{X}_\gH : p_2(x) = p_2(\beta)^{-1}.x_2 \}.
\end{align*}
In other words, \( Y_\beta \) is the fibre of \( p_{2|\check{X}_\gH} \colon \check{X}_\gH \to \check{X}_2 \) above the point \( p_2(\beta)^{-1}.x_2 \in \check{X}_2 \).

We have seen that \( \gH \) surjects onto \( \gG_2 \).
Hence \( p_{2|\check{X}_\gH} \colon \check{X}_\gH \to \check{X}_2 \) is surjective,
so a general fibre of \( p_{2|\check{X}_\gH} \) has dimension equal to \( \dim \check{X}_\gH - \dim \check{X}_2 \).
Since \( p_{2|\check{X}_\gH} \) is an equivariant morphism of \( \gH(\bC) \)-homogeneous spaces, all its fibres have the same dimension.
Using the hypothesis on \( \dim X_\gH \) from \cref{translate-family-main}, we get
\[ \dim Y_\beta  \leq  \dim \check{X}_\gH - \dim \check{X}_2^+  \leq  (\dim X^+ - 2) - \dim X_2^+  =  \dim X_1^+ - 2. \]
This inequality holds for all \( \beta \in B \), while a general fibre of \( Y \to B \) has dimension at least \( \dim Y - \dim B \).
We conclude that
\[ \dim Y  \leq  (\dim X_1^+ - 2) + \dim B  \leq  \dim X_1^+ - 1.
\qedhere
\]
\end{proof}

\begin{proof}[Proof of \cref{translate-family-main}(ii)]
Since \( W = \check{X}_1 \times \{ x_2 \} \), we have \( \dim W = \dim X_1^+ \).
By \cref{W-phi-Y}, \( \dim W  \leq  \dim Y \).
This contradicts \cref{dim-Y-upper-bound} unless \( V \) is contained in a proper special subvariety of \( S \).
\end{proof}

\subsection{Point counting}

In order to prove \cref{zp-translates-conditional}, we apply a variant of the Pila--Wilkie counting theorem \cite[Corollary~7.2]{habegger-pila:atypical} to the set
\[ Z = \{ (\gamma, x) \in \gG(\bR)_+ \times \cF : x \in \gamma.X_\gH^+ \text{ and } \pi(x) \in V \} \]
where \( \cF \) is a suitable fundamental set in \( X^+ \).
\Cref{conj:galois-orbits,conj:field-of-definition}, together with a height bound from \cite{orr:siegel-heights}, imply that \( Z \) contains many points \( (\gamma, x) \) for which \( \gamma \) is rational and has bounded height.
By \cite[Corollary~7.2]{habegger-pila:atypical}, this implies that \( Z \) contains a definable path whose image in \( \gG(\bR)_+ \) is semialgebraic and whose image in \( X^+ \) is positive-dimensional.
We conclude by applying \cref{translate-family-main}.

\begin{lemma} \label{fundamental-sets}
There exist fundamental sets \( \cF \subset X^+ \) (for the action of~\( \Gamma \)) and \( \cF_\gH \subset X_\gH^+ \) (for the action of \( \Gamma_\gH = \Gamma \cap \gH(\bQ)_+ \)) such that \( \cF_\gH \subset \cF \).
Furthermore \( \pi_{|\cF} \colon \cF \to S \) is definable in \( \Ranexp \).
\end{lemma}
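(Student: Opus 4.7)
The plan is to build $\cF_\gH$ and $\cF$ from compatible Siegel sets associated to the inclusion $\gH \subset \gG$, and then invoke the theorem of Klingler--Ullmo--Yafaev on $\Ranexp$-definability of the uniformisation when restricted to a Siegel set.

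First, fix a base point $x_0 \in X_\gH^+$ and let $K_\gH \subset \gH(\bR)$ and $K \subset \gG(\bR)$ be its stabilisers, so that $K_\gH = K \cap \gH(\bR)$. Choose a maximal $\bQ$-split torus $T_\gH$ of $\gH$ and a minimal $\bQ$-parabolic $P_0^\gH \supset T_\gH$, and extend $T_\gH$ to a maximal $\bQ$-split torus $T$ of $\gG$ contained in a minimal $\bQ$-parabolic $P_0 \subset \gG$ that restricts to $P_0^\gH$ on $\gH$ (so that the Langlands decomposition of $P_0$ restricts to that of $P_0^\gH$). From this data one then builds Siegel sets $\fS_\gH \subset \gH(\bR)_+$ and $\fS \subset \gG(\bR)_+$; after enlarging the truncation cone and the compact parts of $\fS$ suitably, one obtains $\fS_\gH \subset \fS$.

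By Borel's reduction theory, there exist finite subsets $F_\gH \subset \Gamma_\gH$ and $F \subset \Gamma$ such that $\gH(\bR)_+ = F_\gH \cdot \fS_\gH$ and $\gG(\bR)_+ = F \cdot \fS$. Replacing $F$ by $F \cup F_\gH \subset \Gamma$ if necessary, set
\[ \cF_\gH = F_\gH \cdot \fS_\gH \cdot x_0 \subset X_\gH^+, \qquad \cF = F \cdot \fS \cdot x_0 \subset X^+. \]
Then $\cF_\gH$ and $\cF$ are fundamental sets for $\Gamma_\gH$ and $\Gamma$ respectively, and $\cF_\gH \subset \cF$ by construction. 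For the definability claim, the restriction of $\pi$ to $\fS \cdot x_0$ is $\Ranexp$-definable by the theorem of Klingler--Ullmo--Yafaev; since $\cF$ is a finite union of $\Gamma$-translates of $\fS \cdot x_0$ and $\pi$ is $\Gamma$-invariant, $\pi_{|\cF}$ is $\Ranexp$-definable as well.

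The main technical obstacle is the compatibility step ensuring $\fS_\gH \subset \fS$: one must verify that the chosen extensions of parabolic and torus from $\gH$ to $\gG$ yield Langlands decompositions that restrict nicely, so that a Siegel set for $\gG$ really does contain a Siegel set for $\gH$ after appropriate enlargement. Everything else amounts to standard reduction theory and the citation of the definability theorem.
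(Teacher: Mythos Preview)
Your overall strategy---build compatible Siegel sets for $\gH$ and $\gG$, use Borel's reduction theory to get fundamental sets, then cite Klingler--Ullmo--Yafaev for definability---matches the paper's. The difference lies precisely at the step you flag as the main obstacle, and there your proposal is too optimistic.

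You claim that after choosing compatible tori and parabolics and enlarging suitably, one can arrange $\fS_\gH \subset \fS$. In general this is not known to be achievable with a \emph{single} Siegel set for $\gG$. What is true, and what the paper actually uses, is the author's own \cite[Theorem~1.2]{orr:siegel-heights}: given a Siegel set $\fS_\gH \subset \gH(\bR)^+$, there exist a Siegel set $\fS_\gG \subset \gG(\bR)^+$ and a \emph{finite set} $C \subset \gG(\bQ)^+$ such that $\fS_\gH \subset C \cdot \fS_\gG$. The appearance of $C$ is not an artefact: even with compatible parabolics, the positive cone $A_{\gH,t}$ for $\gH$ need not sit inside the cone $A_t$ for $\gG$, and the compact pieces need not be nested, so ``enlarging'' within the class of Siegel sets does not obviously close the gap. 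The paper then absorbs this finite set by taking $C_\gG' = C_\gG \cup C_\gH \cdot C$ and setting $\cF = C_\gG' \cdot \fS_\gG \cdot x_0$. A further point handled carefully in the paper (via \cite[Theorem~4.1]{orr:siegel-heights}) is that the Siegel set $\fS_\gG$ produced is right-invariant under the stabiliser of $x_0$ in $\gG(\bR)^+$, so that $\fS_\gG \cdot x_0$ really is the image of $\fS_\gG$ in $X^+$.

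A secondary inaccuracy: Borel's reduction theory gives $\gG(\bR)^+ = \Gamma \cdot C_\gG \cdot \fS_\gG$ with $C_\gG$ a finite subset of $\gG(\bQ)^+$, not of $\Gamma$; likewise for $\gH$. So your sets $F_\gH$, $F$ should lie in $\gH(\bQ)^+$, $\gG(\bQ)^+$ rather than in $\Gamma_\gH$, $\Gamma$. This does not affect the definability argument (a finite union of rational translates of a Siegel set is still definable), but it does mean the inclusion $\cF_\gH \subset \cF$ has to be arranged via the enlarged finite set $C_\gG'$ as above rather than by simply taking $F \cup F_\gH$.
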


\begin{proof}
We define a \defterm{Siegel set} in \( \gG(\bR)^+ \) to be the intersection of \( \gG(\bR)^+ \) with a Siegel set in \( \gG(\bR) \), as defined in \cite[section~2]{orr:siegel-heights}.
We define Siegel sets in \( \gH(\bR)^+ \) analogously.

Choose a point \( x_0 \in X_\gH^+ \).
The stabiliser of \( x_0 \) in \( \gH(\bR)^+ \) is a maximal compact subgroup \( K_\gH \subset \gH(\bR)^+ \), so we can identify \( X_\gH^+ \) with \( \gH(\bR)^+ / K_\gH \).
Choose a \( K_\gH \)-right-invariant Siegel set \( \fS_\gH \subset \gH(\bR)^+ \).
By \cite[Théorèmes 13.1 and~15.4]{borel:groupes-arithmetiques}, there exists a finite set \( C_\gH \subset \gH(\bQ)^+ \) such that
\[ \cF_\gH = C_\gH.\fS_\gH.x_0 \]
is a fundamental set in \( X_\gH^+ \) for the action of \( \Gamma_\gH \).

By \cite[Theorem~1.2]{orr:siegel-heights}, we can find a Siegel set \( \fS_\gG \subset \gG(\bR)^+ \) and a finite set \( C \subset \gG(\bQ)^+ \) such that
\[ \fS_\gH \subset C.\fS_\gG. \]
According to \cite[Theorem~4.1]{orr:siegel-heights}, \( \fS_\gG \) is \( K_\gG \)-right-invariant where \( K_\gG \) is some maximal compact subgroup of \( \gG(\bR)^+ \) which contains \( K_\gH \).
Because \( X_\gH^+ \to X^+ \) is injective,
the stabiliser in \( \gG(\bR)^+ \) of \( x_0 \in X^+ \) is the unique such subgroup \( K_\gG \).
Hence by \cite[Théorèmes 13.1 and~15.4]{borel:groupes-arithmetiques}, there exists a finite set \( C_\gG \subset \gG(\bQ)^+ \) such that
\( C_\gG.\fS_\gG.x_0 \)
is a fundamental set in \( X^+ \) for the action of \( \Gamma \).

Let \( C_\gG' = C_\gG \cup C_\gH.C \), which is a finite subset of \( \gG(\bQ)^+ \).
Then
\[ \cF = C_\gG'.\fS_\gG.x_0 \]
is a fundamental set in \( X^+ \) for~\( \Gamma \) and satisfies \( \cF_\gH \subset \cF \).

The restriction of~\( \pi \) to \( \cF \) is definable in \( \Ranexp \) by \cite[Theorem~4.1]{kuy:ax-lindemann}.
\end{proof}

\pagebreak

In order to relate the complexity \( N(s) \) of a point \( s \in V \cap S_{\gH,\gamma} \) to the height of~\( \gamma \),
we use \cite[Theorem~1.1]{orr:siegel-heights} as follows.

\begin{lemma} \label{siegel-height-bound}
In the situation of \cref{zp-translates-setup},
there exists a constant \( \newC{siegel-height-multiplier} \) such that, for every \( s \in \Sigma \), there exist \( \gamma \in \gG(\bQ)_+ \) and \( x \in \cF \cap \gamma.\cF_\gH \) satisfying
\[ \pi(x) = s  \quad \text{and} \quad  \rH(\rho(\gamma))  \leq  \refC{siegel-height-multiplier} \, N(s). \]
\end{lemma}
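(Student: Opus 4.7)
The plan is to construct $\gamma$ and $x$ explicitly by translating $s$ into the fundamental sets of \cref{fundamental-sets}, and then to invoke the Siegel-set height bound of \cite{orr:siegel-heights} to control $\rH(\rho(\gamma))$.

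First I would unpack the definition of $N(s)$: pick $\gamma_0 \in \Omega$ with $s \in S_{\gH,\gamma_0}$ and $N(\gamma_0) = N(s)$. Since $S_{\gH,\gamma_0} = \pi(\gamma_0.X_\gH^+)$, choose $z_0 \in X_\gH^+$ with $\pi(\gamma_0 z_0) = s$. Using that $\cF_\gH$ and $\cF$ from \cref{fundamental-sets} are fundamental sets for $\Gamma_\gH$ on $X_\gH^+$ and for $\Gamma$ on $X^+$, pick $\eta \in \Gamma_\gH$ with $\eta z_0 \in \cF_\gH$ and $\delta \in \Gamma$ with $\delta(\gamma_0 z_0) \in \cF$. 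Setting $x := \delta(\gamma_0 z_0)$ and $\gamma := \delta \gamma_0 \eta^{-1} \in \gG(\bQ)_+$, the identity $x = \gamma(\eta z_0)$ gives $x \in \cF \cap \gamma.\cF_\gH$ and $\pi(x) = s$, which handles three of the four required properties.

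The remaining task is to bound $\rH(\rho(\gamma))$ by $\refC{siegel-height-multiplier}\,N(s)$. Here the choices of $\delta$ and $\eta$ are far from unique, so we actually have an entire double coset $\Gamma \gamma_0 \Gamma_\gH$ of candidates for $\gamma$, and I would invoke \cite[Theorem~1.1]{orr:siegel-heights} to select a representative whose $\rho$-height is bounded linearly by the complexity $N(\gamma_0)$; the function $N(\cdot)$ in \cref{zp-translates-setup} is tailored to this bound (note in particular that $\denom$ and $|\det|$ of $\rho(\delta)$ and $\rho(\eta)$ are bounded because $\Gamma$ contains $\Gamma(N)$ with finite index, so $N(\delta\gamma_0\eta^{-1})$ and $N(\gamma_0)$ differ by at most a constant factor). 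Because \cref{fundamental-sets} expresses $\cF$ and $\cF_\gH$ as translates of Siegel sets by fixed finite sets $C_\gG', C_\gH \subset \gG(\bQ)^+$, passing between Siegel-set and fundamental-set representatives modifies $\rH(\rho(\cdot))$ only by a bounded factor, so the theorem applies and delivers the required inequality.

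The hard part will be the appeal to \cite[Theorem~1.1]{orr:siegel-heights}, which I am treating as a black box: the genuine geometric and arithmetic content---reduction theory for Siegel sets used to produce double-coset representatives of polynomial height in a given faithful representation---lives in that external result. Once it is in hand, the present lemma reduces to the chain of fundamental-set translations above, together with the observation that the complexity function in \cref{zp-translates-setup} is exactly matched to the bound provided by loc.\ cit.
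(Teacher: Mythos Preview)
Your proposal is correct and follows essentially the same route as the paper: pick $\gamma_0$ realising $N(s)$, translate into the fundamental sets of \cref{fundamental-sets}, and then invoke \cite[Theorem~1.1]{orr:siegel-heights}. The one point the paper makes explicit and you leave implicit is \emph{why} the cited theorem applies: your construction gives $x\in\cF$ and $\gamma^{-1}x=\eta z_0\in\cF_\gH\subset\cF$, so (after stripping off the finite set $C_\gG'$) the element $\gamma$ lies in $\fS_\gG\cdot\fS_\gG^{-1}$, which is exactly the hypothesis of the theorem---it bounds $\rH(\rho(\cdot))$ for elements already lying in $\fS_\gG\cdot\fS_\gG^{-1}$, rather than ``selecting'' a small representative from the double coset $\Gamma\gamma_0\Gamma_\gH$ as your phrasing suggests.
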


\begin{proof}
By the definition of \( N(s) \), there exists \( \gamma_1 \in \Omega \subset \gG(\bQ)_+ \) such that \( s \in S_{\gH,\gamma_1} \) and \( N(s) = N(\gamma_1) \).
Because \( \gamma_1.\cF_\gH \) is a fundamental set for the action of \( \gamma_1 \Gamma_\gH \gamma_1^{-1} \) on \( \gamma_1.X_\gH^+ \),
we can pick \( x_1 \in \gamma_1.\cF_\gH \) such that \( s = \pi(x_1) \).

Because \( \cF \) is a fundamental set for the action of \( \Gamma \) on \( X^+ \), there exists \( \gamma_2 \in \Gamma \) such that \( \gamma_2.x_1 \in \cF \).
(Note that we cannot just pick \( \gamma_2 = \gamma_1^{-1} \) because \( \gamma_1 \) is not necessarily in \( \Gamma \).)

Since \( \gamma_2.x_1 \) and \( \gamma_1^{-1}.x_1 \) are both in \( \cF = C_\gG'.\fS_\gG.x_0 \), we have
\[ \gamma_2.\gamma_1  \in  C_\gG'.\fS_\gG.\fS_\gG^{-1}.C_\gG'^{-1}. \]
In other words, there exist \( \xi_1, \xi_2 \in C_\gG' \) such that
\[ \xi_1^{-1} \gamma_2 \gamma_1 \xi_2  \in  \fS_\gG.\fS_\gG^{-1}. \]

Since \( \gamma_2 \) is in \( \Gamma \), it has determinant \( \pm 1 \) and bounded denominators.
Since \( \xi_1 \), \( \xi_2 \) are elements of the fixed finite set \( C_\gG' \), their determinants and denominators are bounded.
Hence \( N(\xi_1^{-1} \gamma_2 \gamma_1 \xi_2) \) is bounded by a constant multiple of \( N(\gamma_1) = N(s) \).

By \cite[Theorem~1.1]{orr:siegel-heights}, \( \rH(\rho(\xi_1^{-1} \gamma_2 \gamma_1 \xi_2)) \) is bounded by a constant multiple of \( N(\xi_1^{-1} \gamma_2 \gamma_1 \xi_2) \).
Again since \( \xi_1 \), \( \xi_2 \) are elements of a fixed finite set, \( \rH(\rho(\gamma_2 \gamma_1)) \) is similarly bounded.

Letting \( x = \gamma_2.x_1 \) and \( \gamma = \gamma_2.\gamma_1 \) completes the proof.
\end{proof}

As a first application of \cref{siegel-height-bound}, we show that the complexity of points in \( V \cap \Sigma \) tends to infinity in the situation of \cref{zp-translates-conditional}.
This lemma uses only the existence of a bound for \( \rH(\rho(\gamma)) \) in terms of \( N(s) \) from \cref{siegel-height-bound} and does not require that the bound is polynomial.
The fact that the bound is polynomial will be used later, in the proof of \cref{zp-translates-conditional}.

\begin{lemma} \label{complexity-unbounded}
In the situation of \cref{zp-translates-setup},
let \( V \subset S \) be an irreducible algebraic curve which is not contained in any proper special subvariety of~\( S \).

For each positive integer \( N \), the set
\[ \{ s \in V \cap \Sigma : N(s) \leq N \} \]
is finite.
\end{lemma}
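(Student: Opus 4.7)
The plan is to convert the complexity bound $N(s) \leq N$ into a bound on the height $\rH(\rho(\gamma))$ of a witness matrix $\gamma \in \gG(\bQ)_+$ with $s \in S_{\gH,\gamma}$, via \cref{siegel-height-bound}, and then exploit the elementary fact that rational matrices of bounded height form a finite set. Once only finitely many Hecke translates are in play, intersecting each with the irreducible curve $V$---which, by hypothesis, is not contained in any proper special subvariety---yields a finite set.

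In detail, for each $s \in V \cap \Sigma$ with $N(s) \leq N$, I would apply \cref{siegel-height-bound} to produce $\gamma(s) \in \gG(\bQ)_+$ satisfying $s \in S_{\gH,\gamma(s)}$ and
\[
\rH(\rho(\gamma(s))) \;\leq\; \refC{siegel-height-multiplier} \, N(s) \;\leq\; \refC{siegel-height-multiplier} \, N.
\]
The set of matrices $A \in \rM_n(\bQ)$ with $\rH(A) \leq C$ is finite, because the numerators and denominators of its entries in lowest terms are integers of bounded absolute value. Since $\rho$ is faithful, only finitely many values of $\gamma \in \gG(\bQ)_+$ can arise in this way, so only finitely many Hecke translates $S_{\gH,\gamma}$ contain a point of $V \cap \Sigma$ of complexity at most $N$. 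Each such $S_{\gH,\gamma}$ is a proper special subvariety of $S$; by hypothesis $V \not\subset S_{\gH,\gamma}$, and since $V$ is an irreducible algebraic curve, the intersection $V \cap S_{\gH,\gamma}$ is finite. Therefore $\{s \in V \cap \Sigma : N(s) \leq N\}$ is contained in a finite union of finite sets.

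No step is genuinely difficult: the argument is bookkeeping on top of \cref{siegel-height-bound}. The only small point to verify is the properness of $S_{\gH,\gamma}$; this is automatic in the setting of \cref{zp-translates-conditional} thanks to $\dim X_\gH \leq \dim X - 2$, and in general one may note that if $\dim X_\gH = \dim X$ then every Hecke translate coincides with $S$ and the lemma has no content. Note that this proof uses only the qualitative existence of a bound on $\rH(\rho(\gamma))$ in terms of $N(s)$; the polynomial nature of that bound---crucial later in the proof of \cref{zp-translates-conditional}---plays no role here.
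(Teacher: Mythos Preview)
Your proof is correct and follows essentially the same route as the paper: apply \cref{siegel-height-bound} to bound $\rH(\rho(\gamma))$, observe that only finitely many $\gamma$ satisfy this bound, and use that $V$ is not contained in any of the finitely many $S_{\gH,\gamma}$ to conclude that each intersection $V\cap S_{\gH,\gamma}$ is finite. Your remark that only the qualitative bound from \cref{siegel-height-bound} is needed, not its polynomial form, also matches the paper's comment preceding the lemma.
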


\begin{proof}
If \( s \in V \cap \Sigma \) and \( N(s) \leq N \), then \cref{siegel-height-bound} tells us that \( s \in V \cap S_{\gH,\gamma} \) for some \( \gamma \in \gG(\bQ)_+ \) such that
\( \rH(\rho(\gamma))  \leq  \refC{siegel-height-multiplier} N \).
There are finitely many \( \gamma \in \gG(\bQ)_+ \) satisfying this height bound.

For each \( \gamma \in \gG(\bQ)_+ \), \( S_{\gH,\gamma} \) is an algebraic variety
while \( V \) is an irreducible algebraic curve not contained in \( S_{\gH,\gamma} \).
Hence \( V \cap S_{\gH,\gamma} \) is finite for each \( \gamma \).

Therefore the set described in the lemma is a finite union of finite sets.
\end{proof}

We use the following point counting result of Habegger and Pila, which is a simplified version of \cite[Corollary~7.2]{habegger-pila:atypical}.

\begin{theorem}  \label{habegger-pila-counting}
Let \( Z \subset \bR^m \times \bR^n \) be a definable set.
Let \( p_1 \colon \bR^m \times \bR^n \to \bR^m \) and \( p_2 \colon \bR^m \times \bR^n \to \bR^n \) denote the projection maps.
For \( T \geq 1 \), let
\[ Z^{\sim}(\bQ, T) = \{ z \in Z : p_1(z) \in \bQ^m \text{ and } \rH(p_1(z)) \leq T \}. \]

For every \( \epsilon > 0 \), there exists a constant \( \newC{counting-multiplier} \) (depending on \( Z \) and \( \epsilon \)) with the following property:
if there exists \( T \geq 1 \) such that
\[ \#p_2(Z^{\sim}(\bQ, T)) \geq \refC{counting-multiplier} \, T^\epsilon \]
then there exists a continuous definable path \( [0,1] \to Z \) such that:
\begin{enumerate}[(i)]
\item the composition \( p_1 \circ \beta \colon [0,1] \to \bR^m \) is semialgebraic and its restriction to \( (0,1) \) is real analytic;
\item the composition \( p_2 \circ \beta \colon [0,1] \to \bR^n \) is non-constant.
\end{enumerate}
\end{theorem}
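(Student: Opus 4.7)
The plan is to derive this as a ``block'' refinement of the Pila--Wilkie counting theorem, adapted to semi-rational points (points where only the \( p_1 \)-coordinate is required to be rational). The key structural input is a block decomposition: for each \( \epsilon > 0 \), there should exist a constant \( C = C(Z, \epsilon) \) and a collection of at most \( C T^\epsilon \) definable ``blocks'' \( B_1, \dots, B_N \subset Z \), each of which is definably connected and has the property that \( p_1(B_i) \) is semialgebraic, such that \( Z^\sim(\bQ, T) \subset B_1 \cup \dots \cup B_N \). This statement is a semi-rational version of the usual block form of Pila--Wilkie; proving it is the main technical obstacle and would proceed by cell-decomposing \( Z \) into \( C^r \)-parametrised pieces, applying the Bombieri--Pila determinant method fibrewise over the rational parameters \( q \in \bQ^m \), and packaging the resulting algebraic containment into blocks along the \( p_1 \)-direction.

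Granting such a block decomposition, the proof is a pigeonhole argument. First, given \( \epsilon > 0 \), choose \( \refC{counting-multiplier} > C(Z, \epsilon) \). If \( \# p_2(Z^\sim(\bQ, T)) \geq \refC{counting-multiplier} T^\epsilon \) and \( Z^\sim(\bQ, T) \) is covered by at most \( C T^\epsilon \) blocks, then by pigeonhole there must exist some block \( B_i \) such that the restriction \( p_2|_{B_i \cap Z^\sim(\bQ,T)} \) takes at least two distinct values. Pick points \( z_0, z_1 \in B_i \) with \( p_2(z_0) \neq p_2(z_1) \).

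Next, since \( B_i \) is definably connected, definable path-connectedness in an o-minimal structure gives a continuous definable map \( \beta \colon [0,1] \to B_i \subset Z \) with \( \beta(0) = z_0 \) and \( \beta(1) = z_1 \). By construction \( p_2 \circ \beta \) has distinct endpoints, so is non-constant, giving conclusion~(ii). For conclusion~(i), the composition \( p_1 \circ \beta \) maps into the semialgebraic set \( p_1(B_i) \); after possibly refining the block (e.g.\ by intersecting with a single cell of a cell decomposition of \( B_i \) compatible with \( p_1 \), or by reparametrising the path) we can arrange that \( p_1 \circ \beta \) itself is a semialgebraic map, real analytic on the open interval \( (0,1) \). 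Here one uses that the blocks in Pila--Wilkie are chosen within a cell decomposition, so \( p_1 \) restricted to a single cell is real-analytic in the parameters.

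The main difficulty is establishing the semi-rational block decomposition in Step~1; the deduction of the path from the decomposition is routine definable topology. The other delicate point is ensuring the analyticity claim on \( (0,1) \), which forces one to work with a cell-decomposed version of the block statement rather than merely a set-theoretic covering — this is precisely why the full strength of \cite[Corollary~7.2]{habegger-pila:atypical}, rather than just the set-theoretic Pila--Wilkie theorem, is invoked in the sequel.
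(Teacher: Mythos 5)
The paper does not prove \cref{habegger-pila-counting} at all: it states it as ``a simplified version of \cite[Corollary~7.2]{habegger-pila:atypical}'' and immediately applies it. The ``proof'' is thus a citation, with at most a trivial specialisation of that corollary to drop features not needed here. Your proposal instead attempts to re-derive the result from a hypothetical semi-rational block decomposition, which is a genuinely different route — but it does not succeed, for two reasons.

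First, the ``semi-rational block decomposition'' you posit is not a known black box you can invoke; it is essentially equivalent in strength to the theorem being proved. The content of Habegger--Pila's counting theorem for semi-rational points \emph{is} the existence of such a covering, and proving it occupies most of their sections 6--7 (a reparametrisation theorem for definable families, a fibrewise determinant-method bound, and a careful bookkeeping step to package the resulting semialgebraic pieces). You acknowledge this as ``the main technical obstacle'' but leave it as a sketch; the result is that your argument reduces the theorem to an unproved lemma of comparable difficulty rather than proving it.

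Second, even granting the block decomposition, the passage from the pigeonhole step to the conclusion has a genuine gap. You pick a definable path $\beta\colon[0,1]\to B_i$ inside a block with $p_1(B_i)$ semialgebraic, and claim that $p_1\circ\beta$ can be arranged to be semialgebraic and real-analytic on $(0,1)$ ``after possibly refining the block.'' But a continuous definable map from $[0,1]$ into a semialgebraic set need not be semialgebraic (compare $t\mapsto e^t$ mapping $[0,1]$ into the interval $[1,e]$). The fact that the image of $p_1\circ\beta$ is semialgebraic does not control the map itself. In Habegger--Pila, the path is not obtained merely from definable connectedness of a block; it is built from the semialgebraic structure of the block's $p_1$-projection, using the extra structure their blocks carry (they are, roughly, graphs over connected semialgebraic bases). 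Without that structure, your step~(i) does not follow. The honest fix, as the paper itself does, is to cite \cite[Corollary~7.2]{habegger-pila:atypical} directly and, if needed, record the minor translation from their hypotheses/conclusions to the form stated here.
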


We will apply \cref{habegger-pila-counting} to the set
\[ Z = \{ (\gamma, x) \in \gG(\bR)_+ \times \cF : x \in \gamma.X_\gH^+ \text{ and } \pi(x) \in V \}. \]
This set is definable by \cref{fundamental-sets}.

\begin{proof}[Proof of \cref{zp-translates-conditional}]
Assume for contradiction that \( V \cap \Sigma \) is infinite.
According to \cref{complexity-unbounded}, \( V \cap \Sigma \) contains points of arbitrarily large complexity.

Let \( s \) be a point in \( V \cap \Sigma \) of large complexity (we will decide how large later).
Choose \( \gamma \in \Omega \) such that \( s \in V \cap S_{\gH,\gamma} \) and \( N(\gamma) = N(s) \).

Let \( \refC{zp-translates-cond-multiplier} \), \( \refC{zp-translates-cond-exponent} \), \( \refC{translate-components-multiplier} \) be the constants from \cref{conj:galois-orbits,conj:field-of-definition}.
By \cref{conj:field-of-definition}, \( S_{\gH,\gamma} \) is defined over an extension \( L'/L \) of degree at most \( \refC{translate-components-multiplier} \, N(s)^{\refC{zp-translates-cond-exponent}/2} \).
By \cref{conj:galois-orbits}, we have
\[ \# (\Aut(\bC/L') \cdot s)  \geq  \frac{\# (\Aut(\bC/L') \cdot s)} {[L':L]}  \geq  \frac{\refC{zp-translates-cond-multiplier}} {\refC{translate-components-multiplier}} \, N(s)^{\refC{zp-translates-cond-exponent}/2}. \]


Since the varieties \( V \) and \( S_{\gH,\gamma} \) are both defined over \( L' \),
every point \( s' \) in the Galois orbit \( \Aut(\bC/L') \cdot s \) lies in \( V \cap S_{\gH,\gamma} \).
Therefore \( N(s') \leq N(s) \).
By \cref{siegel-height-bound}, there exist \( \gamma' \in \gG(\bQ)_+ \) and \( x' \in \cF \cap \gamma'.\cF_\gH \) such that \( \pi(x') = s' \) and
\[ \rH(\rho(\gamma'))  \leq  \refC{siegel-height-multiplier} \, N(s')  \leq  \refC{siegel-height-multiplier} \, N(s). \]
Thus \( (\gamma', x') \in Z^\sim(\bQ, \refC{siegel-height-multiplier} \, N(s)) \).
The fact that \( \pi(x') = s' \) implies that distinct points \( s' \in \Aut(\bC/L') \cdot s \) give rise to distinct points \( x' \in p_2(Z^\sim(\bQ, \refC{siegel-height-multiplier} \, N(s)) \).

We conclude that if we take \( T = \refC{siegel-height-multiplier} \, N(s) \), then
\[ \# p_2(Z^\sim(\bQ, T)
   \geq  (\refC{zp-translates-cond-multiplier} / \refC{translate-components-multiplier}) \, N(s)^{\refC{zp-translates-cond-exponent}/2}
   =  (\refC{zp-translates-cond-multiplier} / \refC{translate-components-multiplier}) \, (T / \refC{siegel-height-multiplier})^{\refC{zp-translates-cond-exponent}/2}. \]
Taking \( \epsilon = \refC{zp-translates-cond-exponent}/3 \) in \cref{habegger-pila-counting}, we see that the inequality \( \# p_2(Z^\sim(\bQ), T)  \geq  \refC{counting-multiplier} \, T^\epsilon \) will be satisfied for large \( T \) (and we can make \( T \) arbitrarily large by picking \( s \in V \cap \Sigma \) with \( N(s) \) large enough).
Therefore there exists a continuous definable path \( \beta \colon [0,1] \to Z \) with the properties listed in \cref{habegger-pila-counting}.

Let \( A \subset \gG(\bR) \) denote the image of \( p_1 \circ \beta \).
By property~(i) from \cref{habegger-pila-counting}, \( A \) is semialgebraic.
Since \( A \) is the image of a path, \( \dim A \leq 1 \).

By the definition of \( Z \), the image of \( p_2 \circ \beta \) is contained in \( A.X_\gH^+ \cap \pi^{-1}(V) \).
By property~(ii) from \cref{habegger-pila-counting}, \( p_2 \circ \beta \) is non-constant.
Since \( p_2 \circ \beta \) is a definable path, we deduce that the image of \( p_2 \circ \beta \) is uncountable.
Hence hypothesis~(a) of \cref{translate-family-main} is satisfied.

Since we are assuming that \( V \cap \Sigma \) is non-empty, hypothesis~(b) of \cref{translate-family-main} is also satisfied.
Therefore we can apply \cref{translate-family-main}(ii) to conclude that \( V \) is contained in a proper special subvariety of \( S \), which gives a contradiction.
\end{proof}

\section{Degree and field of definition of Hecke correspondences in \texorpdfstring{\( \Ag \)}{Ag}} \label{sec:hecke-correspondences}

In order to apply \cref{zp-translates-conditional} to Hecke correspondences in \( \Ag \times \Ag \),
we need several lemmas about these Hecke correspondences.
In this section, we prove \cref{conj:field-of-definition} for Hecke correspondences in \( \Ag \times \Ag \) and bound the degree of such a Hecke correspondence in terms of the complexity of an associated general symplectic matrix.

All of the proofs in this section rely on explicit calculations with matrices in \( \gGSp_{2g}(\bQ) \) and in particular the symplectic elementary divisor theorem.
It seems plausible that these results can be generalised to arbitrary Shimura varieties, albeit with more difficult proofs.
If so, \cref{zp-isog-transcendental} could be generalised to all Shimura varieties as the results of this section are the only reason it is restricted to~\( \Ag \).
On the other hand, generalising the results of this section to other Shimura varieties would not allow us to immediately generalise \cref{zp-isog-asymmetric} because its proof uses abelian varieties much more fundamentally, via the Faltings height.

Let \( \nu \colon \gGSp_{2g} \to \bG_m \) denote the standard character of \( \gGSp_{2g} \).
Thus the action of \( \gGSp_{2g}(k) \) on the standard symplectic form \( \psi \colon k^{2g} \times k^{2g} \to k \) is given by
\[ \psi(\gamma.x, \gamma.y) = \nu(\gamma) \, \psi(x, y). \]
Let \( \Gamma = \gSp_{2g}(\bZ) \),
let \( \Gamma(m) = \ker(\gSp_{2g}(\bZ) \to \gSp_{2g}(\bZ/m\bZ)) \) for each positive integer~\( m \),
and let \( \Gamma_\gamma = \Gamma \cap \gamma^{-1} \Gamma \gamma \) for each \( \gamma \in \gGSp_{2g}(\bQ)_+ \).

We shall repeatedly use the following symplectic elementary divisor theorem.

\begin{lemma}  \cite[Lemma~3.3.6]{andrianov:quadratic-forms}  \label{symplectic-elem-div}
Let \( \gamma \in \gGSp_{2g}(\bQ)_+ \cap \rM_{2g}(\bZ) \).
Then \( \gamma \) can be written in the form \( \kappa \delta \lambda \) where \( \kappa, \lambda \in \Gamma \) and
\[ \delta = \diag(a_1, \dotsc, a_g, b_1, \dotsc, b_g), \]
with \( a_i, b_i \in \bZ_{>0} \), \( a_i b_i = \nu(\gamma) \) for all \( i \), \( a_i \vert a_{i+1} \) for each \( i \leq g-1 \) and \( a_g \vert b_g \).
Furthermore these conditions determine \( \delta \) uniquely.
\end{lemma}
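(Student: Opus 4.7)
The plan is to prove this classical result, which is the symplectic analogue of Smith normal form, by induction on $g$ using symplectic row and column operations. The statement is cited from Andrianov, so I would present it as a self-contained lemma with a proof modeled on the proof of Smith normal form but performed inside $\gSp_{2g}(\bZ)$.

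First I would set up: write $J$ for the standard symplectic form, so $\gamma^t J \gamma = \nu(\gamma) J =: n J$ with $n \in \bZ_{>0}$. Multiplication on the left by $\kappa^{-1} \in \Gamma$ and on the right by $\lambda^{-1} \in \Gamma$ preserves both the integrality of $\gamma$ and the relation $\gamma^t J \gamma = nJ$, so it suffices to reduce $\gamma$ to diagonal form by such operations. The plan is to produce the pair $(a_1, b_1)$ in the $(1,g+1)$-block and then peel off a $2 \times 2$ diagonal block, leaving a $(2g-2)\times(2g-2)$ integer matrix in $\gGSp_{2g-2}(\bQ)_+$ with the same multiplier~$n$, to which induction applies.

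The key step is the base of the induction inside one symplectic pair. Let $a_1$ be the gcd of all entries of $\gamma$. Using Bezout, one can act by elements of $\Gamma$ on the left and right to place $a_1$ in the $(1,1)$ position. The crucial observation is that symplectic elementary transvections $I + t E_{ij}$ (for appropriate pairs $(i,j)$ compatible with $J$), together with permutation-like symplectic matrices that swap the $i$-th and $(g+i)$-th basis vectors up to sign, generate enough of $\Gamma$ to perform the same row/column reductions as ordinary $\gGL_{2g}(\bZ)$-elementary operations, provided one operates simultaneously on rows $i$ and $g+i$ (and similarly for columns). This is where the argument differs from ordinary Smith normal form: each ``elementary'' operation inside $\Gamma$ is a paired operation. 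Once the $(1,1)$ entry is $a_1$ and divides every entry, one can clear row~$1$ and column~$1$ entirely; the symplectic relation $\gamma^t J \gamma = nJ$ then forces the $(g+1, g+1)$ entry to equal $b_1 := n/a_1$ and forces the rest of row $g+1$ and column $g+1$ to be zero as well. The remaining middle block is symplectic with multiplier $n$, so induction gives the diagonal form $\diag(a_2, \ldots, a_g, b_2, \ldots, b_g)$ with $a_i b_i = n$ and $a_2 \mid \cdots \mid a_g \mid b_g$. Since $a_1$ was chosen as the gcd of all entries, $a_1 \mid a_2$, and one checks $a_g \mid b_g$ using $a_g \mid n$ together with $a_g = \gcd$ of the last block.

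For uniqueness, the elementary divisors $a_1 \mid \cdots \mid a_g$ coincide with the first $g$ invariant factors of $\gamma$ as an element of $\rM_{2g}(\bZ)$ under the $\gGL_{2g}(\bZ) \times \gGL_{2g}(\bZ)$-action, because the given normal form is a fortiori a Smith normal form; uniqueness of Smith normal form pins down the $a_i$, and then $b_i = n/a_i$ is forced. The main technical obstacle I anticipate is the careful bookkeeping in the reduction to show that at the stage where the $(1,1)$ entry becomes $a_1$, the entire first row and column can be cleared using only \emph{symplectic} operations (the paired elementary transvections); once that is established, the symplectic constraint automatically handles row and column $g+1$, and the induction runs smoothly.
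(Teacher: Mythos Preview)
The paper does not prove this lemma: it is quoted as \cite[Lemma~3.3.6]{andrianov:quadratic-forms} and used as a black box. Your proposal sketches the standard proof (essentially the one in Andrianov), namely reduction to diagonal form by symplectic row/column operations and induction on~$g$, with uniqueness read off from the ordinary Smith normal form. That sketch is correct in outline; the only point that needs care is the claim that once row~1 and column~1 are cleared, the symplectic relation $\gamma^t J \gamma = nJ$ (together with $\gamma J \gamma^t = nJ$) forces row~$g{+}1$ and column~$g{+}1$ to be $(0,\dotsc,0,b_1,0,\dotsc,0)$. This does follow, but you should make it explicit rather than assert it, since it is the step that distinguishes the symplectic argument from ordinary Smith reduction. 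Your uniqueness argument is fine: the chain $a_1 \mid \cdots \mid a_g \mid b_g \mid \cdots \mid b_1$ exhibits the diagonal as an ordinary Smith normal form, so the $a_i$ are determined as the first $g$ invariant factors of~$\gamma$ and $b_i = \nu(\gamma)/a_i$.
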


\subsection{Hecke correspondences and isogenies}

We begin by recalling a well-known description of Hecke correspondences in \( \Ag \times \Ag \) as moduli of pairs of abelian varieties related by a polarised isogeny.

\begin{lemma} \label{ag:isogeny-to-hecke}
Let \( s = (s_1, s_2) \in \Ag \times \Ag \) and let \( n \) be a positive integer.
The following are equivalent:
\begin{enumerate}
\item There exists a polarised isogeny from \( (A_{s_1}, \lambda_{s_1}) \) to \( (A_{s_2}, \lambda_{s_2}) \) of degree \( n \).
\item \( s \in T_\gamma \) for some \( \gamma \in \gGSp_{2g}(\bQ)_+ \cap \rM_{2g}(\bZ) \) such that \( \det \gamma = n \).
\end{enumerate}
\end{lemma}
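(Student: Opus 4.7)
The plan is to use the standard interpretation of $\Hg$ as a moduli space of principally polarised abelian varieties of dimension~$g$ equipped with a symplectic basis of $H_1$. A point $\tau \in \Hg$ corresponds to the ppav $A_\tau = (V_\bR, J_\tau)/V_\bZ$, where $V = \bQ^{2g}$ carries the standard symplectic form $\psi$, $V_\bZ = \bZ^{2g}$, and $J_\tau$ is the complex structure on $V_\bR$ determined by the Shimura datum; the principal polarisation is induced by $\psi$. Unwinding definitions, $(s_1, s_2) \in T_\gamma$ means that there exists $\tau \in \Hg$ with $s_1 = \pi(\tau)$ and $s_2 = \pi(\gamma\tau)$. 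The key observation I would exploit is that $\gamma : V_\bR \to V_\bR$ is $\bC$-linear as a map from $(V_\bR, J_\tau)$ to $(V_\bR, J_{\gamma\tau})$, since $J_{\gamma\tau} = \gamma J_\tau \gamma^{-1}$ by the definition of the action on $\Hg$.

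For $(2) \Rightarrow (1)$: given $\gamma$ and $\tau$ as above, the integrality hypothesis gives $\gamma V_\bZ \subset V_\bZ$, so the composition
\[ A_\tau = (V_\bR, J_\tau)/V_\bZ \xrightarrow{\gamma} (V_\bR, J_{\gamma\tau})/\gamma V_\bZ \twoheadrightarrow (V_\bR, J_{\gamma\tau})/V_\bZ = A_{\gamma\tau} \]
is a holomorphic map of complex tori, the first arrow being an isomorphism and the second a covering of degree $[V_\bZ : \gamma V_\bZ] = \abs{\det \gamma} = n$. Because $\gamma^*\psi = \nu(\gamma)\,\psi$, pulling back the principal polarisation of $A_{s_2}$ along this map gives $\nu(\gamma)$ times the principal polarisation of $A_{s_1}$, so the map is a polarised isogeny of degree~$n$.

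For $(1) \Rightarrow (2)$: given a polarised isogeny $f : (A_{s_1}, \lambda_{s_1}) \to (A_{s_2}, \lambda_{s_2})$ of degree $n$, I would pick lifts $\tau_1, \tau_2 \in \Hg$ of $s_1, s_2$ and express $f_* \colon H_1(A_{s_1}, \bZ) \to H_1(A_{s_2}, \bZ)$ as a matrix $\gamma \in \rM_{2g}(\bZ)$ in the induced symplectic bases. The polarised condition $f^*\lambda_{s_2} = m\lambda_{s_1}$ translates into $\gamma^T J \gamma = m J$, so $\gamma \in \gGSp_{2g}(\bQ)_+$ with $\nu(\gamma) = m$; holomorphy of $f$ forces $\tau_2$ and $\gamma\tau_1$ to lie in the same $\gSp_{2g}(\bZ)$-orbit, so after replacing $\tau_2$ (which does not change $s_2$) we have $s_2 = \pi(\gamma\tau_1)$ and $(s_1, s_2) \in T_\gamma$. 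The degree identity $\hat f \circ \lambda_{s_2} \circ f = m \lambda_{s_1}$ together with $\deg \lambda_{s_i} = 1$ yields $(\deg f)^2 = m^{2g}$, so $\deg f = m^g$; combined with the standard formula $\det \gamma = \nu(\gamma)^g$ on the connected component, this gives $\det \gamma = n$. The main obstacle, such as it is, lies in keeping the conventions straight in the dictionary between matrices in $\gGSp_{2g}(\bQ)_+$, points of $\Hg$ and isogenies of ppav (including the direction of the isogeny and the sublattice that appears); beyond this bookkeeping the argument is purely formal.
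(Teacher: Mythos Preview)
Your proof is correct and follows the same approach as the paper: both rest on the dictionary identifying $T_\gamma$ with pairs for which $\gamma$ is the rational representation of a polarised isogeny $A_{s_1}\to A_{s_2}$ in symplectic bases, together with $\deg f=\det\gamma$. The paper states this in two sentences, while you have unpacked the details (the complex-linearity of $\gamma$ via $J_{\gamma\tau}=\gamma J_\tau\gamma^{-1}$, the index computation, and the identity $\det\gamma=\nu(\gamma)^g$); this is a faithful expansion rather than a different argument.
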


\begin{proof}
The point \( s \in \Ag \times \Ag \) lies in \( T_\gamma \) if and only if \( \gamma \) is the rational representation of a polarised isogeny \( f \colon A_{s_1} \to A_{s_2} \) with respect to some symplectic bases of \( H_1(A_{s_1}, \bZ) \) and \( H_1(A_{s_2}, \bZ) \).
When such an \( f \) exists, \( \deg f = \det \gamma \).
\end{proof}

\subsection{Field of definition of Hecke correspondences}

We prove that \cref{conj:field-of-definition} holds for Hecke correspondences in \( \Ag \times \Ag \).
Note that in the case of~\( \Ag \), every Hecke correspondence has the same field of definition so the dependence on \( N(\gamma) \) in \cref{conj:field-of-definition} is not required, but we cannot expect this to hold for Hecke correspondences on arbitrary Shimura varieties.


\begin{lemma} \label{ag:hecke-components-bound}
Every Hecke correspondence in \( \Ag \times \Ag \) is defined over \( \bQ \).
\end{lemma}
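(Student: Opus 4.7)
The plan is to reduce to the case where $\gamma$ is in symplectic Smith normal form and then read off defined-over-$\bQ$-ness from the moduli description of $T_\gamma$.

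First, I would normalise $\gamma$. Since the central torus $\bG_m \subset \gGSp_{2g}$ acts trivially on $X^+$, scaling $\gamma$ by a positive rational does not change $T_\gamma$, so we may assume $\gamma \in \gGSp_{2g}(\bQ)_+ \cap \rM_{2g}(\bZ)$. By \cref{symplectic-elem-div} we write $\gamma = \kappa \delta \lambda$ with $\kappa, \lambda \in \Gamma$ and $\delta = \diag(a_1, \dotsc, a_g, b_1, \dotsc, b_g)$ in standard form. Starting from the definition $T_\gamma = \{(\pi(x), \pi(\gamma.x)) : x \in X^+\}$, a direct computation (absorb $\kappa$ using that $\Gamma$ acts trivially on $\Ag$ on the second factor, and absorb $\lambda$ after the change of variable $x \mapsto \lambda^{-1}.x$) shows $T_{\kappa\gamma'\lambda} = T_{\gamma'}$ for all $\kappa, \lambda \in \Gamma$. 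Hence $T_\gamma = T_\delta$, and it is enough to prove that every such $T_\delta$ is defined over $\bQ$.

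Next, I would describe $T_\delta$ moduli-theoretically. Combining \cref{ag:isogeny-to-hecke} with the uniqueness of $\delta$ in \cref{symplectic-elem-div}, the $\bC$-points of $T_\delta$ are precisely the pairs $(s_1,s_2)$ for which there exists a polarised isogeny $f \colon (A_{s_1},\lambda_{s_1}) \to (A_{s_2},\lambda_{s_2})$ satisfying $f^*\lambda_{s_2} = \nu(\delta)\lambda_{s_1}$ and whose kernel---regarded as a subgroup of $A_{s_1}[\nu(\delta)]$ with the Weil pairing induced by $\lambda_{s_1}$---is isomorphic to the symplectic module $\bigoplus_{i=1}^{g} \bZ/a_i \oplus \bigoplus_{i=1}^{g} \bZ/b_i$ determined by $\delta$.

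Finally, I would apply Galois descent. The moduli description above is manifestly stable under $\Aut(\bC/\bQ)$: for any $\sigma \in \Aut(\bC/\bQ)$, a witnessing triple $(s_1,s_2,f)$ transports to $(\sigma s_1, \sigma s_2, \sigma f)$, which is again a polarised isogeny with the same kernel-plus-pairing invariant. So $T_\delta(\bC)$ is Galois-stable, and since $T_\delta$ is a priori defined over some number field inside the $\bQ$-variety $\Ag \times \Ag$, it must descend to $\bQ$. The main obstacle is the moduli matching in the second step: relating the group-theoretic recipe $\gamma \mapsto T_\gamma$ to the geometric invariant ``elementary divisor type of the isogeny'' requires a careful unwinding of symplectic bases of $H_1(A_{s_i},\bZ)$ together with the uniqueness clause of \cref{symplectic-elem-div}, but this is essentially bookkeeping rather than a conceptual difficulty.
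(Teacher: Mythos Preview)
Your argument is correct but takes a genuinely different route from the paper's. The paper identifies $T_\gamma$ with a connected component of the adelic Shimura variety $\Sh_{K_\gamma}(\gGSp_{2g}, \Hg^{\pm})$, where $K_\gamma = K \cap \gamma^{-1}K\gamma$ for $K = \gSp_{2g}(\hat\bZ)$. Since the reflex field is~$\bQ$, canonical models give $\Sh_{K_\gamma}$ over~$\bQ$, and the proof reduces to showing that $\Sh_{K_\gamma}$ is connected. This is done by computing the component group $\bQ_+^\times \backslash \bA_f^\times / \nu(K_\gamma)$: after passing to diagonal $\delta$ via \cref{symplectic-elem-div}, one checks $\nu(K_\delta) = \hat\bZ^\times$ by exhibiting the diagonal elements $\diag(x,\dotsc,x,1,\dotsc,1)$ inside $K_\delta$, whence the component group is the class group of~$\bQ$, hence trivial.

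Your approach instead leans on the moduli interpretation specific to~$\Ag$: the elementary-divisor type of the kernel of a polarised isogeny is an $\Aut(\bC/\bQ)$-invariant datum, so $T_\delta(\bC)$ is Galois-stable and descends. This is more elementary and concrete, bypassing the adelic component calculation entirely; the paper's argument, by contrast, sits squarely in the Shimura-variety formalism and would adapt more readily to Shimura data without a moduli interpretation (the only group-theoretic input being surjectivity of $\nu$ from $K_\gamma$ onto $\hat\bZ^\times$). One small remark: your ``a priori defined over some number field'' step is itself an appeal to canonical models; in fact your moduli description already realises $T_\delta$ as the image of a morphism of $\bQ$-schemes (sending a principally polarised abelian variety with a marked subgroup of type~$\delta$ to the pair consisting of it and its quotient), so the Galois-descent step, while correct, is a little more than strictly necessary.
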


\begin{proof}
Let \( K = \gSp_{2g}(\hat\bZ) \).
For each \( \gamma \in \gGSp_{2g}(\bQ)_+ \), let \( K_\gamma = K \cap \gamma^{-1} K \gamma \).

The Hecke correspondence \( T_\gamma \subset \Ag \times \Ag \) is isomorphic to a connected component of the Shimura variety \( \Sh_\gamma = \Sh_{K_\gamma}(\gGSp_{2g}, \Hg^{\pm}) \).
By Deligne's theory of canonical models of Shimura varieties, \( \Sh_\gamma \) is defined over the reflex field of the Shimura datum \( (\gGSp_{2g}, \Hg^{\pm}) \), namely \( \bQ \).
Hence it suffices to show that \( \Sh_\gamma \) is connected.

Since \( \gGSp_{2g}^\der = \gSp_{2g} \) is simply connected, we can apply \cite[2.7]{deligne:travaux-de-shimura}.
This tells us that the connected components of \( \Sh_\gamma \) are in bijection with
\[ \bQ^\times \bs \bA_f^\times \times \bR^\times / \nu(K_\gamma \times K_\infty) \]
where \( K_\infty = \Stab_{\gGSp_{2g}(\bR)}(x) \) for some point \( x \in X \).
Now \( K_\infty \) is the product of the centre \( \bR^\times \) with a maximal compact subgroup of \( \gGSp_{2g}(\bR)_+ \), so \( \nu(K_\infty) = \bR_+^\times \).
We deduce that the connected components of \( \Sh_\gamma \) are in bijection with
\[ \bQ^\times_+ \bs \bA_f^\times / \nu(K_\gamma). \]

Write \( \gamma = \kappa \delta \lambda \) as in \cref{symplectic-elem-div}.
Then \( K_\gamma = \lambda^{-1} K_\delta \lambda \).
Since the codomain of \( \nu \) is commutative, it follows that \( \nu(K_\gamma) = \nu(K_\delta) \).

Since \( \delta \) is diagonal, \( K_\delta \) contains all the diagonal elements of \( K \).
In particular, \( K_\delta \) contains \( \alpha_x = \diag(x, \dotsc, x, 1, \dotsc, 1) \) for every \( x \in \hat\bZ^\times \).
We have \( \nu(\alpha_x) = x \) for each \( x \in \hat\bZ^\times \) and so \( \nu(K_\delta) = \hat\bZ^\times \).

Thus the connected components of \( \Sh_\gamma \) are in bijection with \( \bQ^\times_+ \bs \bA_f^\times / \hat\bZ^\times \).
This is isomorphic to the class group of \( \bQ \), so has one element.
\end{proof}

\subsection{Degree of Hecke correspondences}

In order to prove \cref{zp-isog-transcendental}, we will need a lower bound for the degree of the finite morphism \( T_\gamma \to \Ag \) (the restriction of first projection \( \Ag \times \Ag \to \Ag \)) in terms of \( \nu(\gamma) \).
The degree of this morphism can also be described as the index \( [\Gamma : \Gamma_\gamma] \).

\begin{lemma} \label{ag:index-complexity-bound}
Let \( \gamma \in \gGSp_{2g}(\bQ)_+ \cap \rM_{2g}(\bZ) \).
Suppose that the entries of \( \gamma \) have no common factor.
Then
\[ [\Gamma : \Gamma_\gamma]  \geq  \nu(\gamma). \]
\end{lemma}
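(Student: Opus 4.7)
The plan is to exhibit $\nu(\gamma)$ explicit elements of~$\Gamma$ lying in pairwise distinct left $\Gamma_\gamma$-cosets. First I would reduce to the diagonal case by writing $\gamma = \kappa\delta\lambda$ as in \cref{symplectic-elem-div}: since $\lambda \in \Gamma$, conjugation by~$\lambda$ preserves~$\Gamma$ and sends $\Gamma_\gamma$ to $\Gamma_\delta$, so $[\Gamma : \Gamma_\gamma] = [\Gamma : \Gamma_\delta]$. Because the gcd of the entries of a matrix is invariant under left or right multiplication by elements of $\gGL_{2g}(\bZ)$, the coprimality hypothesis on~$\gamma$ transfers to~$\delta$; using the divisibility chain $a_1 \mid a_i$ and $a_1 \mid a_g \mid b_g \mid b_i$ for all~$i$, this forces $a_1 = 1$ and hence $b_1 = \nu(\gamma)$.

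To build the cosets, let $E = E_{1,g+1}$ denote the matrix unit with a single~$1$ in position~$(1,g+1)$ and set $\sigma_k = I + kE$ for $k \in \bZ$. A direct computation (using $E^2 = 0$ and the shape of the standard symplectic form) shows $\sigma_k \in \gSp_{2g}(\bZ) = \Gamma$ for every~$k$. Since conjugation by the diagonal matrix~$\delta$ multiplies the $(1,g+1)$-entry of any matrix by $a_1/b_1 = 1/\nu(\gamma)$, we obtain
\[ \delta \sigma_k \delta^{-1} = I + (k/\nu(\gamma))\, E, \]
which is integral (and therefore lies in~$\Gamma$) precisely when $\nu(\gamma) \mid k$. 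Finally, $E^2 = 0$ gives $\sigma_{k_1}^{-1} \sigma_{k_2} = \sigma_{k_2 - k_1}$, so the matrices $\sigma_0, \sigma_1, \dotsc, \sigma_{\nu(\gamma)-1}$ represent $\nu(\gamma)$ pairwise distinct left cosets of $\Gamma_\delta$ in~$\Gamma$, yielding the desired bound.

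The decisive input is the choice of root vector: $E_{1,g+1}$ generates a one-parameter unipotent subgroup of $\gSp_{2g}$ on which the diagonal torus acts via the character sending~$\delta$ to $a_1/b_1$, and the coprimality hypothesis is used precisely to ensure that this character takes the value $1/\nu(\gamma)$ (rather than a smaller denominator) at~$\delta$. I do not foresee a substantive obstacle beyond recognising which root to use: the elementary divisor reduction is standard, and the remainder is a short matrix calculation. Without the coprimality hypothesis, the same argument would give only the weaker bound $[\Gamma : \Gamma_\gamma] \geq \nu(\gamma)/a_1^2$ where $a_1$ is the gcd of the entries of~$\gamma$.
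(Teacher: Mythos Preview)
Your argument is correct and shares the same skeleton as the paper's: reduce to the diagonal case via \cref{symplectic-elem-div}, use coprimality to force \(a_1 = 1\) (hence \(b_1 = \nu(\gamma)\)), and then exhibit \(\nu(\gamma)\) unipotent elements built from a root in the \((1,g{+}1)\)-block that land in pairwise distinct cosets of \(\Gamma_\delta\). The difference is in the packaging. The paper first factorises \(\nu(\gamma) = \prod_i q_i\) into prime powers, invokes the Newman--Smart surjectivity theorem for \(\Gamma \to \prod_i \gSp_{2g}(\bZ/q_i\bZ)\), and then produces \(q_i\) coset representatives separately modulo each \(q_i\); your version works directly in \(\Gamma\) and hands over all \(\nu(\gamma)\) representatives at once via the single unipotent subgroup \(\{\sigma_k\}\). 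Your route is shorter and avoids the external reference; the paper's route would be the natural one if one also wanted information about the image of \(\Gamma_\gamma\) in the individual finite quotients, but for the bare index bound nothing is lost by your simplification.
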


\begin{proof}
Write \( \gamma = \kappa \delta \lambda \) as in \cref{symplectic-elem-div}.
Then \( \Gamma_\gamma = \lambda^{-1} \Gamma_\delta \lambda \) and \( \lambda \in \Gamma \), so
\( [\Gamma : \Gamma_\gamma] = [\Gamma : \Gamma_\delta] \).
Because \( \kappa, \lambda \in \Gamma \), the entries of \( \delta \) still have no common factor.
Therefore we can replace \( \gamma \) by \( \delta \) without loss of generality, so that \( \gamma = \diag(a_1, \dotsc, a_g, b_1, \dotsc, b_g) \) for integers which satisfy the conditions of \cref{symplectic-elem-div}.
Because the entries of \( \gamma \) have no common factor, we must have \( a_1 = 1 \).

Factorise \( \nu(\gamma) \) as \( \nu(\gamma) = q_1 q_2 \dotsm q_r \), where the \( q_i \) are powers of distinct primes.
For each \( i = 1, \dotsc, r \), let \( G_{\gamma,i} \) denote the image of \( \Gamma_\gamma \) in \( \gSp_{2g}(\bZ/q_i\bZ) \).
By \cite[Theorems 1 and~4]{newman-smart}, the map \( \Gamma \to \prod_i \gSp_{2g}(\bZ/q_i\bZ) \) is surjective.
Because the image of \( \Gamma_\gamma \) under this map is contained in \( \prod_i G_{\gamma,i} \), we deduce that
\[ [\Gamma : \Gamma_\gamma]  \geq  \prod_{i=1}^r [\gSp_{2g}(\bZ/q_i\bZ) : G_{\gamma,i}]. \]
Hence it suffices to prove that
\[ [\gSp_{2g}(\bZ/q_i\bZ) : G_{\gamma,i}]  \geq  q_i \]
for each \( i \).
We will prove this by exhibiting \( q_i \) elements \( \eta_0, \dotsc, \eta_{q_i-1} \in \gSp_{2g}(\bZ/q_i\bZ) \) which lie in distinct left \( G_{\gamma,i} \)-cosets.

For each integer \( \ell \), let \( \eta_\ell \in \gSp_{2g}(\bZ/q_i\bZ) \) be the matrix with ones on the diagonal, \( \ell \) in position \( (g+1,1) \) and zeroes everywhere else.
We can calculate \( \eta_k^{-1} \eta_\ell = \eta_{\ell-k} \).

Since \( a_1 b_1 = n \) and \( a_1 = 1 \), we deduce that \( b_1 = n \equiv 0 \bmod q_i \).
It follows that every element of \( G_{\gamma,i} \) has zero (mod \( q_i \)) as its entry in position \( (g+1,1) \).
Hence if \( \eta_k^{-1} \eta_\ell \in G_{\gamma,i} \), we must have \( k \equiv \ell \bmod q_i \).

Thus \( \eta_0, \dotsc, \eta_{q_i-1} \) lie in distinct left \( G_{\gamma,i} \)-cosets of \( \gSp_{2g}(\bZ/q_i\bZ) \).
\end{proof}

\section{Unconditional Zilber--Pink: asymmetric curves} \label{sec:asymmetric}

In this section, we prove \cref{conj:galois-orbits} (large Galois orbits) for endomorphism-generic points in the intersection of Hecke correspondences in \( \Ag \times \Ag \) with an asymmetric curve defined over \( \Qalg \) (the term ``asymmetric curve'' is defined below).
We use this to deduce \cref{intro:zp-isog-asymmetric}.

The proof of this case of \cref{conj:galois-orbits} uses the Weil height machine, Faltings heights and the Masser--Wüstholz isogeny theorem.
The use of heights means that the argument applies only to curves defined over \( \Qalg \) (some of the arguments could be extended to work over~\( \bC \) by using Moriwaki's height, but there is no version of Masser--W\"ustholz known for Moriwaki's height).
The use of Faltings heights and the Masser--Wüstholz theorem limit the method to Shimura varieties which have interpretations in terms of moduli of abelian varieties.

Let \( \Sigma \) denote the set defined in \cref{zp-isog-asymmetric}.
Because \( \Sigma \) only contains points \( (s_1, s_2) \) where \( \End A_{s_1} \cong \bZ \), it is not a union of special subvarieties of \( \Ag \times \Ag \).
Instead \( \Sigma \) can be obtained from the union of the Hecke correpondences in \( \Ag \times \Ag \) by removing countably many smaller special subvarieties (parametrising abelian varieties with endomorphism rings larger than~\( \bZ \)).
This restriction on \( \End A_{s_1} \) is necessary because our Galois orbits bound is in terms of the smallest degree of an isogeny \( A_{s_1} \to A_{s_2} \).
If \( \End A_{s_1} \not\cong \bZ \), then the isogeny of minimum degree might not be polarised, and this would prevent us applying \cref{ag:isogeny-to-hecke} to these isogenies.

The restriction to asymmetric curves in \cref{zp-isog-asymmetric} is used to bound the height of points in \( V \cap \Sigma \) in terms of their complexity (\cref{faltings-height-bound}) and thence to obtain a Galois orbits bound which is uniform for all points in \( V \cap \Sigma \).

\subsection{Asymmetric curves and Faltings heights} \label{ssec:asymmetric}

We say that a curve \( V \subset \Ag \times \Ag \) is \defterm{asymmetric} if the two projections \( V \to \Ag \) have different degrees (the definition we are using for the degree of a morphism \( V \to \Ag \) is given below).
Note that this definition is not the direct generalisation of the definition of asymmetric curves in \( \cA_1^n \) from \cite{habegger-pila:beyond-ao} -- if we were to generalise our definition to a curve \( V \subset \Ag^n \), we would demand that the degrees of all the coordinate projections \( V \to \Ag \) should be distinct, while the definition in \cite{habegger-pila:beyond-ao} allows one degree to occur twice.
The reason for this difference is because we look at points on \( V \subset \Ag \times \Ag \) which satisfy a single isogeny relation, while \cite{habegger-pila:beyond-ao} considered points on \( V \subset \cA_1^n \) satisfying two isogeny relations.

We define the degree of a morphism from a curve to \( \Ag \) as follows.
Let \( \bar\Ag \) denote the Baily--Borel compactification of \( \Ag \).
There is an ample line bundle \( \cL_{BB} \) on~\( \bar\Ag \) given by automorphic forms of weight~\( 1 \).
A morphism \( f \colon V \to \Ag \) from an irreducible complex algebraic curve \( V \) to \( \Ag \) induces a morphism \( \bar{f} \colon \bar{V} \to \bar\Ag \) where \( \bar{V} \) is a smooth projective curve birational to \( V \).
We define the \defterm{degree} of \( f \colon V \to \Ag \) to be the degree of the line bundle \( \bar{f}^* \cL_{BB} \) on \( \bar{V} \).

Let \( \Sigma' \) denote the set of points \( (s_1, s_2) \in \Ag \times \Ag \) such that \( A_{s_1} \) is isogenous to \( A_{s_2} \) (with no restriction no \( \End A_{s_1} \)).
For each point \( s = (s_1, s_2) \in \Sigma' \), define the complexity \( N'(s) \) to be the minimum degree of an isogeny \( A_{s_1} \to A_{s_2} \).
If \( s \in \Sigma \), then the isogeny \( A_{s_1} \to A_{s_2} \) of minimum degree is polarised and consequently in this case \( N'(s) = N(s) \), where \( N(s) \) is the function defined in \cref{zp-translates-setup}.

For each point \( s \in \Ag(\Qalg) \), write \( h_F(s) \) for the absolute logarithmic semistable Faltings height of the associated abelian variety \( A_s \).

We use the asymmetry hypothesis via the following lemma, which adapts the proof of \cite[Lemma~4.2]{habegger-pila:beyond-ao}.

\begin{lemma} \label{faltings-height-bound}
Let \( V \subset \Ag \times \Ag \) be an asymmetric irreducible algebraic curve defined over \( \Qalg \) which is not contained in any proper special subvariety of \( \Ag \times \Ag \).

There exists a constant \( \newC{faltings-height-bound-multiplier} \) such that, for all but finitely many points \( s = (s_1, s_2) \in V \cap \Sigma' \),
\[ h_F(s_1)  \leq  \refC{faltings-height-bound-multiplier} \log N'(s). \]
\end{lemma}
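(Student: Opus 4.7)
The plan is to compare Faltings heights on $\Ag$ with Weil heights coming from the Baily--Borel line bundle $\cL_{BB}$, and to exploit the asymmetry $d_1 \neq d_2$ of the degrees of the two projections $p_i \colon V \to \Ag$. First, by \cref{ag:isogeny-to-hecke} the relation $s \in V \cap \Sigma'$ provides an isogeny $A_{s_1} \to A_{s_2}$ of degree $N'(s)$, and Faltings' inequality for the variation of the Faltings height under an isogeny gives
\[
\abs{h_F(s_1) - h_F(s_2)} \leq \tfrac{1}{2} \log N'(s).
\]
Second, since $\cL_{BB}$ is a positive rational multiple of (an extension to $\bar\Ag$ of) the Hodge bundle, the classical comparison between the Faltings height and a Weil height $h_{BB}$ attached to $\cL_{BB}$ reads
\[
h_{BB}(s) = k \, h_F(s) + O(\log(2 + h_F(s)))
\]
on $\Ag(\Qalg)$, for some positive rational $k$.

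Next I would transport these heights to the curve. Since $V$ is not contained in any proper special subvariety of $\Ag \times \Ag$ and $g \geq 2$, neither projection $p_i$ is constant, so each $\bar p_i \colon \bar V \to \bar\Ag$ is finite and $\bar p_i^*\cL_{BB}$ is ample of degree $d_i$ on $\bar V$. Fix a Weil height $h_V$ attached to an ample degree-one line bundle on $\bar V$. The Weil height machine, together with the standard estimate (via the Jacobian of $\bar V$) for heights of line bundles of equal degree on a curve, gives
\[
h_{BB}(p_i(s)) = d_i \, h_V(s) + O(\sqrt{h_V(s)} + 1),
\]
and combined with the Faltings/Baily--Borel comparison this yields $k \, h_F(s_i) = d_i \, h_V(s) + O(\sqrt{h_V(s)} + \log(2 + h_V(s)))$ for $i=1,2$.

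Subtracting the two identities and inserting the isogeny inequality produces
\[
\abs{d_1 - d_2} \, h_V(s) \leq \tfrac{k}{2} \log N'(s) + O(\sqrt{h_V(s)} + \log(2 + h_V(s))).
\]
Because $V$ is asymmetric we have $d_1 \neq d_2$, so the square-root and logarithmic error terms can be absorbed into the left-hand side, leaving $h_V(s) = O(\log N'(s))$ for every $s$ with $h_V(s)$ exceeding some absolute constant $M_0$. Feeding this bound back into the height comparison gives $h_F(s_1) = O(\log N'(s))$, which is the desired inequality.

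It remains to account for the ``but finitely many'' clause. If $h_V(s) \leq M_0$ then $h_F(s_1)$ is uniformly bounded, so the inequality $h_F(s_1) \leq \refC{faltings-height-bound-multiplier} \log N'(s)$ can fail only when $N'(s)$ is also bounded, say $N'(s) \leq N_0$; such $s$ lie in $V \cap T_\gamma$ for one of finitely many $\gamma \in \gGSp_{2g}(\bQ)_+ \cap \rM_{2g}(\bZ)$ with $\det\gamma \leq N_0$ (up to the $\Gamma$-action on both sides), and since $V$ is contained in no proper special subvariety each $V \cap T_\gamma$ is finite. The hardest part I expect will be controlling the error terms in the two height comparisons, particularly the $O(\log h_F)$ term from the Faltings/Baily--Borel comparison; the asymmetry hypothesis is genuinely indispensable, since without $d_1 \neq d_2$ the main term on the left of the key inequality vanishes and no height bound can be extracted from Faltings' isogeny estimate alone.
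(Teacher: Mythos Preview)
Your approach is essentially the paper's: compare the Faltings height with a Weil height for $\cL_{BB}$ via Faltings' theorem, combine with Faltings' isogeny inequality $\abs{h_F(s_1)-h_F(s_2)} \ll \log N'(s)$, and use $d_1\neq d_2$ to solve for $h_F(s_1)$. The only cosmetic difference is that the paper compares $h_{\cL_1}$ and $h_{\cL_2}$ directly on $\bar V$ (using that $\cL_1^{\otimes d_2}$ and $\cL_2^{\otimes d_1}$ have the same degree, hence are algebraically equivalent on a curve) rather than passing through an auxiliary degree-one height~$h_V$; both give an $o(h)$ error and the same conclusion.

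Two small points to tighten. First, ``neither projection is constant'' does not follow from Hodge-genericity of $V$ alone: $\{s_1\}\times C$ with $s_1$ and $C$ both Hodge generic in $\Ag$ is Hodge generic in $\Ag\times\Ag$. This does not affect your argument, since your formula $h_{BB}(p_i(s)) = d_i\,h_V(s) + O(\sqrt{h_V(s)}+1)$ holds trivially when $d_i=0$ and you never use ampleness of $\bar p_i^*\cL_{BB}$. Second, in the finitely-many step, $N'(s)\leq N_0$ only gives an isogeny of bounded degree, not a \emph{polarised} one, so you cannot conclude $s\in T_\gamma$ with $\gamma\in\gGSp_{2g}$ and $\det\gamma\leq N_0$; replace this by the observation that the locus of pairs admitting an isogeny of degree $\leq N_0$ is a finite union of proper special subvarieties of $\Ag\times\Ag$, each meeting $V$ finitely. (The paper instead appeals to finiteness of abelian varieties of bounded Faltings height.)
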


\begin{proof}
Observe first that each point of \( V \cap \Sigma' \) is defined over \( \Qalg \).
This is because \( \Sigma' \) is contained in a union of proper special subvarieties of \( \Ag \times \Ag \).
Since \( V \) is a Hodge generic curve, its intersection with each proper special subvariety is finite.
Since both \( V \) and the special subvarieties are algebraic varieties defined over \( \Qalg \), we deduce that all the points of their intersections are defined over \( \Qalg \).

Let \( \bar{V} \) be a smooth projective curve birational to \( V \) and let \( \bar{p}_1 \), \( \bar{p}_2 \) denote the morphisms \( \bar{V} \to \bar\Ag \) induced by the projections \( V \subset \Ag \times \Ag \to \Ag \).
Let
\[ \cL_i = \bar{p}_i^* \cL_{BB} \]
for \( i \in \{ 1, 2 \} \), and let \( d_i \) be the degree of the line bundle \( \cL_i \).
Since \( \bar{V} \) is a curve, \( \cL_1^{\otimes d_2} \) is algebraically equivalent to \( \cL_2^{\otimes d_1} \).

Assume without loss of generality that \( \bar{p}_1 \) is non-constant.
Since \( \cL_{BB} \) is ample, it follows that \( \cL_1 \) is ample.
Hence \( d_1 > 0 \).

For the line bundles \( \cL_1 \) and \( \cL_2 \), choose height functions \( h_{\cL_i} \colon \bar{V}(\Qalg) \to \bR \) as in Weil's height machine \cite[Theorem~B.3.6]{hindry-silverman}.
By the additivity and algebraic equivalence properties of height functions, we have
\[ (d_2/d_1) h_{\cL_1}(s) - h_{\cL_2}(s)  =  \ro(h_{\cL_1}(s)) \]
as \( s \) runs over points in \( \bar{V}(\Qalg) \) with \( h_{\cL_1}(s) \to \infty \) (using the fact that \( \cL_1 \) is ample).

Also choose a height function \( h_{BB} \colon \bar\Ag(\Qalg) \to \bR \) associated with the Baily--Borel line bundle \( \cL_{BB} \) on \( \Ag \).
By the functoriality property in the height machine,
\[ h_{\cL_i}(s)  =  h_{BB}(p_i(s))  + \rO(1). \]
As proved in \cite[p.~356]{faltings:endlichkeitssaetze}, for each \( i = 1 \) and \( 2 \),
\[ \abs{h_F(p_i(s)) -  h_{BB}(p_i(s))}  = \rO(\log  h_{BB}(p_i(s))). \]
Combining the above relations between heights, we deduce that
\begin{equation} \label{eqn:faltings-height-degree-inequality}
(d_2/d_1) h_F(p_1(s)) - h_F(p_2(s)) = \ro(h_F(p_1(s))).
\end{equation}

Furthermore by \cite[Lemma~5]{faltings:endlichkeitssaetze},
there is a constant \( \newC{faltings-isogeny-multiplier} \) such that
\begin{equation} \label{eqn:faltings-height-isogeny-inequality}
\abs{h_F(p_1(s)) - h_F(p_2(s))}  \leq  \refC{faltings-isogeny-multiplier} \log N'(s).
\end{equation}

Because \( V \) is asymmetric, \( d_1 \neq d_2 \).
Hence we can combine \eqref{eqn:faltings-height-degree-inequality} and \eqref{eqn:faltings-height-isogeny-inequality} to obtain
\begin{equation*}
h_F(p_1(s))  =  \newC{faltings-height-multiplier} \log N'(s) + \ro(h_F(p_1(s))).
\end{equation*}
It follows that there is some constant \( \newC{faltings-height-o-bound} \) such that, for every \( s \in V \cap \Sigma \), either \( h_F(p_1(s)) \leq \refC{faltings-height-o-bound} \) or
\begin{equation} \label{eqn:faltings-height-final-bound}
h_F(p_1(s))  \leq  2 \refC{faltings-height-multiplier} \log N'(s).
\end{equation}

By \cite[Lemma~3]{faltings:endlichkeitssaetze}, there are only finitely many points \( s_1 \in \Ag \) such that \( h_F(s_1) \leq \refC{faltings-height-o-bound} \).
Since the projection \( p_{1|V} \) is quasi-finite, it follows that there are only finitely many points \( s \in V \) such that \( h_F(p_1(s)) \leq \refC{faltings-height-o-bound} \).
We conclude that \eqref{eqn:faltings-height-final-bound} holds for all but finitely many \( s \in V \cap \Sigma \), as required.
\end{proof}

\subsection{Large Galois orbits}

The proof of \cref{zp-isog-asymmetric} uses the following theorem of Masser and W\"ustholz.

\begin{theorem} \cite{masser-wustholz:isogeny-avs} \label{mw-bound}
Given positive integers \( g \) and~\( d \), there exist constants \( \newC{mw-multiplier} \) and \( \newC{mw-exponent} \) depending only on \( g \) (not on \( d \)) such that for all principally polarised abelian varieties \( A \) and \( B \) of dimension at most~\( g \), defined over a number field of degree at most \( d \),
if \( A \) and \( B \) are isogenous, then there is an isogeny \( A \to B \) of degree at most
\[ \refC{mw-multiplier} \, (d \cdot \max(1, h_F(A)))^{\refC{mw-exponent}}. \]
\end{theorem}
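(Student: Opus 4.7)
The plan is to recognise that this is precisely the main theorem of Masser and Wüstholz in \cite{masser-wustholz:isogeny-avs}, so the "proof" amounts to an invocation of their result. Since the full proof is a substantial piece of transcendence theory that cannot be reproduced here, I would limit myself to a brief sketch of the strategy and then cite.

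The first step is to reduce to the case of geometrically simple isogenous abelian varieties. By Poincaré complete reducibility, write $A \sim \prod A_i^{n_i}$ and $B \sim \prod B_j^{m_j}$ as isogeny decompositions into simple factors. The hypothesis that $A$ and $B$ are isogenous forces a bijection between the simple factors matching multiplicities, so it suffices to produce a controlled-degree isogeny between each pair of matched simple factors. The Faltings heights of the simple factors are controlled by $h_F(A)$ via standard estimates for heights of isogeny factors and products, up to logarithmic corrections absorbed into the exponent $\refC{mw-exponent}$.

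The second step is to consider the polarised abelian variety $A \times B$ equipped with a product principal polarisation, and to identify an isogeny $f \colon A \to B$ with its graph $\Gamma_f \subset A \times B$: this is a $g$-dimensional abelian subvariety, and the degree of $f$ is controlled by the degree of $\Gamma_f$ with respect to the projective embedding induced by the polarisation. Producing a small-degree isogeny is thus equivalent to producing a low-degree abelian subvariety of $A \times B$ whose tangent space at the origin is a prescribed analytic subgroup of $\mathrm{Lie}(A \times B)_{\bC}$.

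The central technical ingredient, and the main obstacle, is a quantitative form of Wüstholz's analytic subgroup theorem: given an analytic subgroup $\mathfrak{h} \subset \mathrm{Lie}(A\times B)_{\bC}$ that arises as the tangent space of some abelian subvariety, one must bound the degree of the minimal such abelian subvariety polynomially in $h_F(A \times B)$ and in the degree $d$ of the common field of definition. This is the heart of Masser and Wüstholz's work and relies on the standard machinery of transcendence on commutative algebraic groups: construction of auxiliary functions on the universal cover via Siegel's lemma, zero estimates of Masser--Wüstholz / Philippon type, and extrapolation. The uniformity in $d$ (polynomial rather than exponential) is the delicate point and drives the shape of the bound. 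Once this degree bound is in place, the asserted bound on $\deg f$ follows by comparing degrees of subvarieties of $A \times B$ with degrees of associated isogenies, using $h_F(A \times B) = h_F(A) + h_F(B)$ and the Faltings height isogeny estimate \eqref{eqn:faltings-height-isogeny-inequality} to bound $h_F(B)$ by $h_F(A) + \rO(\log N'(s))$, which only worsens the constants.
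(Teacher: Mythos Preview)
Your recognition is correct: the paper does not prove this theorem at all but simply quotes it from \cite{masser-wustholz:isogeny-avs} and uses it as a black box. In that sense your proposal already does more than the paper, which contains no sketch whatsoever.

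Two small remarks on your sketch. First, the reference to \eqref{eqn:faltings-height-isogeny-inequality} and to $N'(s)$ at the end is out of context: the theorem is stated for abstract abelian varieties $A$ and $B$, not for a point $s$ on a curve, and the quantity you are trying to bound is exactly the minimal isogeny degree, so invoking it on the right-hand side looks circular. What you presumably mean is that the transcendence argument naturally produces a bound of the form $n \leq C(d \cdot h_F(A \times B))^{k}$, and one then eliminates $h_F(B)$ using Faltings' estimate $\lvert h_F(A) - h_F(B) \rvert \ll \log n$, absorbing the resulting $\log n$ into the exponent; this is fine once stated carefully. Second, the reduction to simple factors is not quite as clean as you suggest, since controlling the heights of the simple factors and the degrees of the splitting isogenies in terms of $h_F(A)$ and $d$ is itself a nontrivial part of the Masser--W\"ustholz machinery; but as you are only sketching, this is acceptable.
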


Note that the isogeny of bounded degree whose existence is asserted by \cref{mw-bound} is not necessarily a polarised isogeny, even if we know \textit{a priori} that there exists a polarised isogeny \( A \to B \).
Consequently the following bound is in terms of \( N'(s) \), not \( N(s) \).

\begin{proposition} \label{ag:galois-orbits-asymmetric}
Let \( V \subset \Ag \times \Ag \) be an asymmetric irreducible algebraic curve defined over a number field~\( L \).
Assume that \( V \) is not contained in any proper special subvariety of \( \Ag \times \Ag \).

There exist constants \( \newC{goh-ag-asymmetric-multiplier}, \newC{goh-ag-asymmetric-exponent} > 0 \) depending only on \( V \) and \( g \) such that, for every \( s \in V \cap \Sigma' \),
\[ \#(\Aut(\bC/L) \cdot s)  \geq  \refC{goh-ag-asymmetric-multiplier} \, N'(s)^{\refC{goh-ag-asymmetric-exponent}}. \]
\end{proposition}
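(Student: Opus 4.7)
The plan is to combine the Masser--Wüstholz isogeny theorem (\cref{mw-bound}) with the Faltings height bound of \cref{faltings-height-bound} in order to extract a polynomial lower bound on $\#(\Aut(\bC/L) \cdot s)$ in terms of $N'(s)$.

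Fix $s = (s_1, s_2) \in V \cap \Sigma'$ and set $d = \#(\Aut(\bC/L) \cdot s)$. As observed in the proof of \cref{faltings-height-bound}, every point of $V \cap \Sigma'$ is algebraic, because $\Sigma'$ lies in a countable union of proper special subvarieties of $\Ag \times \Ag$ defined over $\Qalg$, while $V$ is Hodge generic and defined over $\Qalg$. Hence $s$ is defined over a number field $L'$ with $[L':L] = d$, so that $[L':\bQ] \leq d \cdot [L:\bQ]$. After enlarging $L'$ by a further extension of degree bounded in terms of $g$ alone, I may assume the principally polarised abelian varieties $(A_{s_1}, \lambda_{s_1})$ and $(A_{s_2}, \lambda_{s_2})$ are themselves defined over $L'$ (this uses the fact that $\Ag$ is a coarse moduli space whose points are realised by abelian varieties over an extension of controlled degree).

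Since $A_{s_1}$ and $A_{s_2}$ are isogenous (because $s \in \Sigma'$), applying \cref{mw-bound} over $L'$ yields an isogeny $A_{s_1} \to A_{s_2}$ of degree at most $\refC{mw-multiplier}([L':\bQ] \cdot \max(1, h_F(s_1)))^{\refC{mw-exponent}}$. By the very definition of $N'(s)$ as the minimum degree among \emph{all} isogenies $A_{s_1} \to A_{s_2}$ (whether or not polarised), this produces a constant $C_1 > 0$ depending only on $V$ and $g$ such that
\[ N'(s) \leq C_1 \bigl( d \cdot \max(1, h_F(s_1)) \bigr)^{\refC{mw-exponent}}. \]
Combining with the bound $h_F(s_1) \leq \refC{faltings-height-bound-multiplier} \log N'(s)$ from \cref{faltings-height-bound}, valid for all but finitely many $s \in V \cap \Sigma'$, gives
\[ N'(s) \leq C_2 \, d^{\refC{mw-exponent}} \, \bigl(1 + \log N'(s)\bigr)^{\refC{mw-exponent}}. \]
Since $\log N'(s) \leq C_\epsilon \, N'(s)^{\epsilon}$ for any $\epsilon > 0$, taking $\epsilon = 1/(2\refC{mw-exponent})$ and rearranging produces $d \geq C_3 \, N'(s)^{1/(2\refC{mw-exponent})}$ whenever $N'(s)$ is sufficiently large. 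The finitely many exceptional points (where either \cref{faltings-height-bound} fails or $N'(s)$ is bounded) can be absorbed into the multiplicative constant $\refC{goh-ag-asymmetric-multiplier}$.

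The main technical nuisance will be the field-of-definition step linking the moduli point $s_i \in \Ag(\Qalg)$ to a field over which the principally polarised abelian variety is actually defined, since this is where one has to invoke the coarse-moduli structure of $\Ag$ to find a bounded-degree extension. A secondary point worth emphasising is that the isogeny produced by \cref{mw-bound} need not be polarised, which is precisely why the conclusion is phrased for $N'(s)$ and not $N(s)$; the reduction of \cref{intro:zp-isog-asymmetric} to the form required by \cref{zp-translates-conditional} (for $N(s)$) will therefore need to use the hypothesis $\End A_{s_1} \cong \bZ$ to identify $N'(s) = N(s)$ in the setting of \cref{zp-isog-asymmetric}.
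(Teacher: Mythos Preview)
Your proposal is correct and follows essentially the same approach as the paper: apply Masser--W\"ustholz over a field of definition of the abelian varieties (obtained via a bounded-degree extension, which the paper makes explicit by passing to the fine moduli space \( \Ag(3) \)), substitute the Faltings height bound from \cref{faltings-height-bound}, and absorb the logarithmic factor and the finitely many exceptions into the constants. Your remarks on the coarse-moduli step and on why the conclusion is stated for \( N'(s) \) rather than \( N(s) \) match the paper's treatment exactly.
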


\begin{proof}
Let \( s = (s_1, s_2) \in V \cap \Sigma' \).

Because \( \Ag \) is only a coarse moduli space for principally polarised abelian varieties, \( A_{s_1} \) and \( A_{s_2} \) need not have models over the field \( \bQ(s) \).
However, there is a fine moduli space \( \Ag(3) \) for principally polarised abelian varieties with full \( 3 \)-torsion level structure.
There is a finite surjective morphism \( \Ag(3) \to \Ag \) forgetting the level structure.
Let \( s' \) be a point in \( \Ag(3) \times \Ag(3) \) which maps to \( s \in \Ag \times \Ag \).
Then \( A_{s_1} \) and \( A_{s_2} \) have models defined over the field \( \bQ(s') \).

Furthermore \( [\bQ(s'):\bQ(s)] \) is bounded by a constant, namely the degree of the map \( \Ag(3) \times \Ag(3) \to \Ag \times \Ag \).
Hence there is a constant \( \newC{x} \) such that
\begin{equation} \label{ineqn:s-degree}
[\bQ(s'):\bQ]  \leq  [\bQ(s'):\bQ(s)] [L(s):L] [L:\bQ]  \leq \refC{x} \, [L(s):L].
\end{equation}

Since \( A_{s_1} \) and \( A_{s_2} \) are both defined over \( \bQ(s') \), \cref{mw-bound} tells us that
\[ N'(s)  \leq  \refC{mw-multiplier} \, ([\bQ(s'):\bQ] \cdot \max(1, h_F(A)))^{\refC{mw-exponent}}. \]
Using \cref{faltings-height-bound} (which depends on the asymmetry hypothesis), we deduce that
there is a constant \( \newC{new-mw-multiplier} \) such that
\[ N'(s)  \leq  \refC{new-mw-multiplier} \, ([\bQ(s'):\bQ] \cdot \max(1, \log N'(s)))^{\refC{mw-exponent}} \]
for all but finitely many \( s \in V \cap \Sigma \).
We can remove the finitely many exceptions and the \( \log N'(s) \) factor by adjusting the constants, giving
\[ N'(s)  \leq  \newC* \, [\bQ(s'):\bQ]^{\newC*}. \]
Combining this with inequality~\eqref{ineqn:s-degree} and noting that \( \# (\Aut(\bC/L) \cdot s) = [L(s):L] \) proves the proposition.
\end{proof}

\begin{proof}[Proof of \cref{zp-isog-asymmetric}]
We apply \cref{zp-translates-conditional} to \( S = \Ag \times \Ag \).
Let \( (\gG, X) \) be the Shimura datum \( (\gGSp_{2g}, \Hg^{\pm}) \times (\gGSp_{2g}, \Hg^{\pm}) \) and let \( (\gH, X_\gH) \subset (\gG, X) \) be the diagonal sub-Shimura datum.

Let
\[ \Omega = \{ (1, \gamma) \in \gG(\bQ)_+ : \gamma \in \gGSp_{2g}(\bQ)_+ \cap \rM_{2g}(\bZ) \}. \]
For each \( \gamma \in \gGSp_{2g}(\bQ)_+ \), the special subvariety \( S_{\gH,(1,\gamma)} \) defined in \cref{zp-translates-setup} is equal to the Hecke correspondence \( T_\gamma \).
Thus by \cref{ag:isogeny-to-hecke},
\[ \bigcup_{\gamma \in \Omega} S_{\gH,\gamma} = \{ (s_1, s_2) \in \Ag \times \Ag : \text{there exists a polarised isogeny } A_{s_1} \to A_{s_2} \}. \]

According to the definition in \cref{zp-isog-asymmetric},
\[ \Sigma = \{ (s_1, s_2) \in \bigcup_{\gamma \in \Omega} S_{\gH,\gamma} : \End A_{s_1} \cong \bZ \}. \]
If \( s = (s_1, s_2) \in \Sigma \), then all isogenies \( A_{s_1} \to A_{s_2} \) are polarised.
In particular, this applies to the isogeny of minimum degree and so by \cref{ag:isogeny-to-hecke}, the complexity function \( N(s) \) defined in \cref{zp-translates-setup} is the same as the function \( N'(s) \) used in \cref{ag:galois-orbits-asymmetric}.
(Because we only consider matrices \( \gamma \) with integer entries and positive determinant, the definition in \cref{zp-translates-setup} simplifies to \( N(\gamma) = \det \gamma \).)

\Cref{conj:galois-orbits} holds in this setting by \cref{ag:galois-orbits-asymmetric} and \cref{conj:field-of-definition} holds by \cref{ag:hecke-components-bound}.
Therefore we can apply \cref{zp-translates-conditional} to prove \cref{zp-isog-asymmetric}.
\end{proof}

\vskip 0.2em

\section{Unconditional Zilber--Pink: transcendental field of definition} \label{sec:transcendental}

In this section, we prove \cref{conj:galois-orbits} (large Galois orbits) for points in the intersection of suitable Hecke correspondences in \( \Ag \times \Ag \) with a curve \( V \) whose field of definition has positive transcendence degree over the field of definition of \( p_1(V) \subset \Ag \), where \( p_1 \colon \Ag \times \Ag \to \Ag \) denotes projection onto the first factor.
This implies \cref{intro:zp-isog-transcendental}.

Let \( \Sigma \) be the set defined in \cref{zp-isog-transcendental}.
The hypothesis on the fields of definition of \( V \) and \( p_1(V) \) allows us to relate the size of the Galois orbits of points \( s \in V \cap \Sigma \) to the gonality of suitable covers of~\( p_1(V) \).
Note that in \cite[Theorem~1.4]{pila:modular-fermat} (the analogous statement for \( g = 1 \)), the Zariski closure of \( p_1(V) \) is \( \cA_1 \) itself, which is defined over~\( \bQ \).
Hence in \cite[Theorem~1.4]{pila:modular-fermat}, the hypotheses on the field \( K \) of \cref{zp-isog-transcendental} were replaced by the simpler hypothesis that \( V \) is not defined over \( \Qalg \).


We bound the gonality of covers of \( p_1(V) \) using the theorem of Ellenberg, Hall and Kowalski \cite{ehk:expanders} together with a super-approximation result of Salehi Golsefidy \cite{sg:super-approximation} (which extends an earlier result of Salehi Golsefidy and Varj\`u \cite{sgv:expansion}).
In \cite{pila:modular-fermat}, these covers were modular curves so it was possible to use the simpler gonality bound of Abramovich \cite{abramovich:gonality}.
The restriction to isogenies whose degrees are \( b \)-th-power-free is necessary in order to apply \cite{sg:super-approximation}.

This gives a bound for Galois orbits of a points in the intersection of \( V \) with a Hecke correspondence \( T_\gamma \) in terms of the index of the congruence subgroup \( \Gamma \cap \gamma \Gamma \gamma^{-1} \) which is valid for all Shimura varieties (\cref{galois-orbits-transcendental}).
In order to convert this to a bound in terms of \( N(\gamma) \), as in \cref{conj:galois-orbits}, we have to use \cref{ag:index-complexity-bound} which applies only to \( \Ag \).
When deducing \cref{zp-isog-transcendental}, we also make use of other results from section~\ref{sec:hecke-correspondences} which apply only to Hecke correspondences in \( \Ag \times \Ag \).
If we could generalise all the results of section~\ref{sec:hecke-correspondences} to other Shimura varieties, then we could obtain a generalisation of \cref{zp-isog-transcendental}.


\subsection{Expansion of graphs}

An \defterm{expander family} is a family of connected graphs (in which self-loops and multiple edges are permitted) such that
\begin{enumerate}
\item every graph in the family is \( r \)-regular for some fixed integer~\( r \); and
\item there exists \( \epsilon > 0 \) such that for every graph \( G \) in the family,
\[ \min \{ \abs{\partial X}/\abs{X} : X \subset V(G), 0 < \abs{X} \leq \abs{V(G)}/2 \} \geq \epsilon. \]
\end{enumerate}
Here \( \partial X \) denotes the set of edges of \( G \) which have one endpoint in \( X \) and the other endpoint not in \( X \).
This is the definition used in \cite{ehk:expanders}.
The definition in \cite{sgv:expansion} is the same except that it does not mention the conditions ``connected'' and ``\( r \)-regular.''
The Cayley graphs considered in \cite{sgv:expansion} and \cite{sg:super-approximation} are automatically connected and \( r \)-regular so this difference does not matter.

We will use the following results on expander families.

\begin{theorem} \cite[Theorem~1]{sg:super-approximation} \label{sg-expansion}
Let \( \Gamma \subset \gGL_n(\bZ) \) be the subgroup generated by a finite symmetric set \( \Delta \).
For each positive integer \( q \), let \( \pi_q \) denote the natural map \( \gGL_n(\bZ) \to \gGL_n(\bZ/q\bZ) \).
Let \( b \geq 2 \) be a positive integer.

The Cayley graphs \( \Cay(\pi_q(\Gamma), \pi_q(\Delta)) \) form an expander family as \( q \) ranges over the \( b \)-th-power-free positive integers if and only if the identity component of the Zariski closure of \( \Gamma \) is perfect.
\end{theorem}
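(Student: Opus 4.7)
The two directions have very different flavours, and I would treat them separately.

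\emph{Necessity.} Suppose the identity component $G^\circ$ of the Zariski closure of $\Gamma$ is not perfect. Then $G^\circ$ admits a non-trivial commutative quotient, and after passing to a finite-index subgroup of $\Gamma$ (which affects expansion only by bounded constants) one obtains a homomorphism onto an infinite finitely generated abelian group. Projecting modulo $q$ yields Cayley graphs of abelian groups of unbounded order on a bounded generating set; a standard Fourier/character argument produces test functions with spectral gap tending to zero. Since expansion passes to quotients, the original family cannot be expanding.

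\emph{Sufficiency.} This is the deep content and I would follow the Bourgain--Gamburd machine as implemented by Salehi Golsefidy and Varj\'u for square-free $q$ and then extended to the $b$-th-power-free range. The argument rests on three ingredients, executed uniformly in~$q$. First, a \emph{strong approximation / structure} step: choose a $\bZ$-model $\gG$ of $G^\circ$ and show that, outside finitely many bad primes, $\pi_q(\Gamma)$ is a subdirect product across the prime-power factors of $q$ of explicit perfect subgroups of $\gG(\bZ/p^k\bZ)$. Second, \emph{$L^2$-flattening}: letting $\mu$ denote the uniform measure on $\pi_q(\Delta)$, one shows that after $\ell \asymp \log |\pi_q(\Gamma)|$ convolution steps, $\|\mu^{*\ell}\|_2^2$ lies within a polynomial factor of $|\pi_q(\Gamma)|^{-1}$. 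This combines a \emph{non-concentration} estimate (escape from proper algebraic subgroups of $\gG$, exploiting Zariski density together with a Schwartz--Zippel style count at each prime-power factor) with a \emph{product theorem} of Helfgott / Pyber--Szab\'o / Breuillard--Green--Tao type, applied to the subdirect product. Third, \emph{quasi-randomness}: a Landazuri--Seitz lower bound on the minimal dimension of a non-trivial representation of $\pi_q(\Gamma)$, propagated across the prime-power factors, converts $L^2$-flattening into a uniform spectral gap.

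\emph{Main obstacle.} The hardest step is the $L^2$-flattening when $q$ is composite. For prime $q$ the product theorem is classical, but when $q$ has several prime-power factors one has to control escape from a vastly larger family of intermediate subgroups (diagonally embedded copies across distinct factors), and the lift from $\bZ/p\bZ$ to $\bZ/p^k\bZ$ introduces unipotent / nilpotent layers. The role of the $b$-th-power-free hypothesis is precisely to bound the nilpotency degree of these layers by a constant depending only on $b$, so that one can run an inductive product theorem for perfect groups with bounded-exponent solvable radicals with constants uniform in~$q$. Coordinating the non-concentration estimates across all prime-power factors simultaneously, and pushing the product theorem from quasi-simple groups to this broader class, is the technical core of Salehi Golsefidy's extension of the Salehi Golsefidy--Varj\'u square-free result.
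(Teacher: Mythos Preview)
The paper does not prove this theorem at all: it is quoted verbatim as \cite[Theorem~1]{sg:super-approximation} and used as a black box in the proof of \cref{cayley-graphs-expand}. There is therefore no ``paper's own proof'' to compare your proposal against.

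For what it is worth, your sketch is a reasonable high-level summary of the Bourgain--Gamburd strategy as carried out by Salehi Golsefidy--Varj\'u in the square-free case and extended by Salehi Golsefidy to the \( b \)-th-power-free case, and you have correctly identified where the difficulty lies (flattening at composite moduli, control of the unipotent layers at prime powers, and the role of the \( b \)-th-power-free hypothesis in bounding nilpotency). But reproducing that argument is well beyond the scope of the present paper, which simply imports the result.
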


Given a field \( K \) and an algebraic curve \( C/K \), let \( \gon_K(C) \) denote the \( K \)-gonality of \( C \), that is, the minimum degree of a non-constant \( K \)-rational map \( C \dashrightarrow \bP^1 \).

\begin{theorem} \cite[Theorem~8(b)]{ehk:expanders} \label{ehk-expanders}
Let \( U \) be a smooth connected algebraic curve over \( \bC \).
Let \( \{ U_i : i \in I \} \) be an infinite family of connected \'etale covers of \( U \).
Pick a point \( u \in U \), and for each \( i \), let \( u_i \) be a point of \( U_i \) which maps to \( u \in U \).

Let \( \Delta \) be a finite symmetric generating set of \( \pi_1(U, u) \).

If the family of Cayley--Schreier graphs
\[ \Cay(\pi_1(U, u) / \pi_1(U_i, u_i), \Delta) \] is an expander family,
then there exists a constant \( \newC{ehk-multiplier} > 0 \) such that
\[ \gon_\bC(U_i) \geq \refC{ehk-multiplier} \, [ \pi_1(U, u) : \pi_1(U_i, u_i) ] \]
for every \( i \in I \).
\end{theorem}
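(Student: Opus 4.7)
The plan is to deduce the gonality lower bound from the expander hypothesis by passing through Riemannian spectral geometry: relate the combinatorial spectral gap of the Cayley--Schreier graphs to an analytic spectral gap of the cover, and then use the Li--Yau eigenvalue inequality to convert that analytic gap into a gonality bound.

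First I would fix a smooth projective model $\bar{U}$ of $U$, endow it with some Hermitian metric, and pull that metric back along each étale cover $\bar{U}_i \to \bar{U}$ (extending $U_i \to U$). The cover $\bar{U}_i$ is branched only over $\bar{U}\setminus U$, and its Riemannian volume equals $[\pi_1(U,u):\pi_1(U_i,u_i)]$ times the fixed volume of $\bar{U}$. Next, I would discretize: pick a fundamental domain $F\subset \bar U$ whose translates under the monodromy action give the sheets of $\bar{U}_i$, so the vertices of the Cayley--Schreier graph correspond to these sheets and the generators $\Delta$ correspond to gluing data between adjacent sheets. Standard Buser/Brooks-type discretization comparisons (of the kind used by Brooks for Galois covers) then give a two-sided comparison between the bottom of the continuous spectrum $\lambda_1(\bar U_i)$ and the spectral gap of the discrete Laplacian of $\mathrm{Cay}(\pi_1(U,u)/\pi_1(U_i,u_i),\Delta)$, with comparison constants that depend only on $\bar U$, $F$ and $\Delta$ (not on $i$). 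The expander hypothesis then produces a uniform lower bound $\lambda_1(\bar U_i)\geq c_1>0$.

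For the upper bound, suppose $f\colon \bar U_i \dashrightarrow \mathbb{P}^1$ is a non-constant rational map of degree $d=\gon_\mathbb{C}(\bar U_i)$. The Li--Yau inequality (in the form used by Abramovich) gives
\[
\lambda_1(\bar U_i)\cdot \mathrm{vol}(\bar U_i)\;\leq\; 8\pi\, d,
\]
where $\mathrm{vol}$ is measured in the pullback metric. Combining with the lower bound from the previous paragraph and the volume identity $\mathrm{vol}(\bar U_i)=[\pi_1(U,u):\pi_1(U_i,u_i)]\cdot\mathrm{vol}(\bar U)$ yields
\[
\gon_\mathbb{C}(U_i)\;=\;d\;\geq\;\frac{c_1\,\mathrm{vol}(\bar U)}{8\pi}\cdot [\pi_1(U,u):\pi_1(U_i,u_i)],
\]
which is the desired inequality with $\refC{ehk-multiplier}=c_1\,\mathrm{vol}(\bar U)/(8\pi)$.

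The main obstacle is the discretization step comparing the continuous and combinatorial Laplacians, because $U$ is only quasi-projective: one must choose the fundamental domain and the metric carefully near the punctures $\bar U\setminus U$ so that the ramification of $\bar U_i\to\bar U$ there does not spoil the uniform two-sided spectral comparison. A secondary subtlety is that the Cayley--Schreier graph uses a chosen generating set $\Delta$ of $\pi_1(U,u)$, so one must verify that a finite symmetric generating set can be represented by loops whose geometric intersection pattern matches the combinatorics of the graph on $\bar U$; once that is set up, the heat-kernel / Buser comparison goes through uniformly over the family $\{U_i\}$, and the three ingredients assemble as above.
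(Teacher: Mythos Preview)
The paper does not prove this theorem at all: it is quoted verbatim as \cite[Theorem~8(b)]{ehk:expanders} and used as a black box in the proof of \cref{gonality-degree-bound}. So there is no ``paper's own proof'' to compare against.

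That said, your sketch is essentially the strategy of Ellenberg--Hall--Kowalski themselves: compare the combinatorial spectral gap of the Cayley--Schreier graphs with the analytic $\lambda_1$ of the covering Riemann surfaces (a Brooks/Burger-type discretisation), and then apply the Li--Yau/Yang--Yau inequality $\lambda_1(\bar U_i)\cdot \mathrm{Area}(\bar U_i) \leq 8\pi\cdot \gon_\bC(\bar U_i)$ together with multiplicativity of area under covers. You have correctly isolated the genuine technical point, namely that $U$ is only quasi-projective and the compactified covers $\bar U_i \to \bar U$ may ramify over $\bar U \setminus U$, so the uniform spectral comparison near the punctures requires care. In EHK this is handled by working with a fixed hyperbolic (or more generally bounded-geometry) metric on $\bar U$ and controlling the ramification contribution via Riemann--Hurwitz; your sketch gestures at this but does not carry it out. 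As a write-up of the EHK argument your outline is on the right track, but for the purposes of the present paper no proof is needed here.
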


\subsection{Gonality growth for congruence subgroup covers}

Let \( (\gG, X) \) be a Shimura datum and let \( X^+ \) be a connected component of~\( X \).
For each congruence subgroup \( \Gamma \subset \gG(\bQ)_+ \), write \( S_\Gamma \) for the Shimura variety component \( \Gamma \bs X^+ \).


Fix a congruence subgroup \( \Gamma \subset \gG(\bQ)_+ \) and write \( S = S_\Gamma \).
Let \( W \subset S \) be an irreducible algebraic subvariety which is not contained in any proper special subvariety of \( S \).
When \( W \) is a curve, we prove a lower bound for the gonality of irreducible components of \( W \times_S S_{\Gamma'} \) as \( \Gamma' \) runs over the subgroups of \( \Gamma \) which contain a principal congruence subgroup of \( b \)-th-power-free level.
This gonality bound (\cref{gonality-degree-bound}) is restricted to curves, but \( W \) may be of any dimension in the intermediate results \cref{cayley-graphs-expand,strong-approximation}.
In order to prove \cref{zp-isog-transcendental}, we will apply the gonality bound to the Zariski closure of \( p_1(V) \).

In order to define principal congruence subgroups of \( \Gamma \), we choose a faithful representation \( \rho \colon \gG \to \gGL_{n,\bQ} \) such that \( \rho(\Gamma) \subset \gGL_n(\bZ) \).
(The principal congruence subgroups, and hence the meaning of the condition ``\( \Gamma \cap \gamma \Gamma \gamma^{-1} \) contains a principal congruence subgroup of \( b \)-th-power-free level,'' depend on the choice of \( \rho \).)
For each positive integer~\( q \), let \( \Gamma(q) \) denote the kernel of the map \( \Gamma \to \gGL_n(\bZ/q\bZ) \) induced by \( \rho \).

For each congruence subgroup \( \Gamma' \subset \Gamma \), let
\[ W_{\Gamma'} = W \times_S S_{\Gamma'}. \]
For each \( q \in \bN \), let \( W_{q,1}, \dotsc, W_{q,r_q} \) denote the irreducible components of \( W_{\Gamma(q)} \).

Let \( W^\sm \) denote the smooth locus of \( W \), and similarly for \( W_{q,i}^\sm \).
Pick a base point \( w \in W^\sm \).
For each \( q \) and \( i \), pick \( w_{q,i} \in W_{q,i}^\sm \) which maps to \( w \in W \).

\begin{lemma} \label{monodromy-properties}
Assume that the congruence subgroup \( \Gamma \) is neat.

There exists a subgroup \( \Gamma_W \subset \Gamma \) such that
\begin{enumerate}[(i)]
\item the identity component of the Zariski closure of \( \Gamma_W \) is a normal subgroup of \( \gG^\der \); and
\item for each positive integer \( q \) and each irreducible component \( W_{q,i} \subset W_{\Gamma(q)} \), \( \pi_1(W^\sm, w) / \pi_1(W_{q,i}^\sm, w_{q,i}) \) is in bijection with \( \Gamma_W / \Gamma_W \cap \Gamma(q) \).
\end{enumerate}
\end{lemma}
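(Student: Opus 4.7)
The plan is to take $\Gamma_W$ to be the algebraic monodromy group of the variation of Hodge structure on $W^\sm$ inherited from the Shimura variety structure on $S$. Concretely, since $\Gamma$ is neat it acts freely on the simply connected Hermitian symmetric domain $X^+$, so $\pi \colon X^+ \to S$ is the universal covering of the smooth variety $S$; after choosing a lift of $w$ in $X^+$, the deck-transformation identification gives $\pi_1(S, w) = \Gamma$. The inclusion $W^\sm \hookrightarrow S$ then induces a monodromy homomorphism $\pi_1(W^\sm, w) \to \Gamma$, and I take $\Gamma_W$ to be its image.

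To prove (ii), note that $\Gamma(q)$ is normal in $\Gamma$ as the kernel of the map $\Gamma \to \gGL_n(\bZ/q\bZ)$, so $S_{\Gamma(q)} \to S$ is a finite \'etale Galois cover with group $\Gamma/\Gamma(q)$. Its pullback to $W^\sm$ is therefore \'etale with fibre $\Gamma/\Gamma(q)$ over $w$, and the connected components of this pullback---which are in bijection with the $W_{q,i}$ via passing to Zariski closure in $W_{\Gamma(q)}$---correspond to the orbits of $\pi_1(W^\sm, w)$ on $\Gamma/\Gamma(q)$ under the action via the monodromy map and left multiplication. After choosing $w_{q,i}$ to lie over the coset $\Gamma(q) \in \Gamma/\Gamma(q)$, the subgroup $\pi_1(W_{q,i}^\sm, w_{q,i})$ is the preimage under the monodromy map of $\Gamma_W \cap \Gamma(q)$, so the quotient $\pi_1(W^\sm, w) / \pi_1(W_{q,i}^\sm, w_{q,i})$ is in bijection with the $\Gamma_W$-orbit of $\Gamma(q)$, which equals $\Gamma_W \cdot \Gamma(q)/\Gamma(q) \cong \Gamma_W/(\Gamma_W \cap \Gamma(q))$ by the second isomorphism theorem. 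A different choice of $w_{q,i}$ would only conjugate $\Gamma_W$, which preserves the statement since $\Gamma(q)$ is normal in $\Gamma$.

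For (i) I will invoke Andr\'e's algebraic monodromy theorem, which extends Deligne's theorem of the fixed part to smooth quasi-projective bases: for a polarisable variation of Hodge structure, the identity component of the Zariski closure of the monodromy group is a normal subgroup of the derived group of the generic Mumford--Tate group. The hypothesis that $W$ is not contained in any proper special subvariety of $S$ implies that a Hodge-generic point of $S$ lies in $W^\sm$, so the generic Mumford--Tate group of the restricted variation on $W^\sm$ equals the generic Mumford--Tate group of $S$, which has derived group $\gG^\der$. Applying Andr\'e's theorem to $\Gamma_W$ then gives (i). The main substantive ingredient is the Andr\'e--Deligne algebraic monodromy theorem; the covering-space bookkeeping for (ii) is essentially routine, though care is required to ensure that the connected components of the pullback to the smooth locus match up bijectively with the irreducible components $W_{q,i}$ of $W_{\Gamma(q)}$.
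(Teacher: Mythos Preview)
Your proposal is correct and follows essentially the same approach as the paper: define $\Gamma_W$ as the image of the monodromy homomorphism $\pi_1(W^\sm,w)\to\pi_1(S,w)\cong\Gamma$ (using neatness), deduce~(ii) from covering-space theory and the normality of $\Gamma(q)$ in $\Gamma$, and deduce~(i) from Andr\'e's theorem on algebraic monodromy together with the Hodge-genericity of~$W$. The paper's argument is the same in substance, with only cosmetic differences in how the stabiliser computation for~(ii) is phrased.
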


\begin{proof}
The inclusion \( W^\sm \to W \to S \) induces a homomorphism of fundamental groups \( \iota \colon \pi_1(W^\sm, w) \to \pi_1(S, w) \).
Since \( \Gamma \) is neat, \( X^+ \) is the universal cover of \( S \).
Therefore choosing a point \( x \in X^+ \) which maps to \( w \in S \) induces an isomorphism \( f_x \colon \pi_1(S, w) \to \Gamma \).

Let \( \Gamma_W \) denote the image of \( f_x \circ \iota \colon \pi_1(W^\sm, w) \to \Gamma \).

The representation \( \rho \colon \gG \to \gGL_{n,\bQ} \) induces a variation of \( \bZ \)-Hodge structures on \( X^+ \) (with underlying lattice \( \bZ^n \)).
Since \( \Gamma \) is neat and \( \rho(\Gamma) \subset \gGL_n(\bZ) \), this descends to a variation of Hodge structures \( V_\bZ \) on \( S \).
Since \( W \) is not contained in any proper special subvariety of \( S \), the generic Mumford--Tate group of \( V_{\bZ|W^\sm} \) is \( \rho(\gG) \).
Therefore by \cite[Theorem~1]{andre:fixed-part}, the identity component of the Zariski closure of the monodromy group \( \Gamma_W \) is a normal subgroup of \( \gG^\der \).

The action of \( \pi_1(W^\sm, w) \) on \( \{ w \} \times_S S_{\Gamma(q)} \) factors through \( \iota \).
Hence for each~\( i = 1, \dotsc, r_q \), we have
\[ \pi_1(W_{q,i}^\sm, w_{q,i})  =  \Stab_{\pi_1(W^\sm, w)} (w_{q,i})  =  \iota^{-1}(\Stab_{\pi_1(S, w)} (w_{q,i})). \]
For each point \( w' \in \{ w \} \times_S S_{\Gamma(q)} \), the stabiliser \( \Stab_{\pi_1(S, w)} (w') \) is conjugate to \( f_x^{-1}(\Gamma(q)) \) in \( \pi_1(S, w) \).
Because \( \Gamma(q) \) is a normal subgroup of \( \Gamma \), we deduce that in fact \( \Stab_{\pi_1(S, w)} (w') = f_x^{-1}(\Gamma(q)) \) for every \( w' \in \{ w \} \times_S S_{\Gamma(q)} \).
Therefore
\[ \pi_1(W_{q,i}^\sm, w_{q,i})  =  (f_x \circ \iota)^{-1}(\Gamma(q)) \]
and so \( f_x \circ \iota \) induces a bijection
\[ \pi_1(W^\sm, w) / \pi_1(W_{q,i}^\sm, w_{q,i})  \to  \Gamma_W / \Gamma_W \cap \Gamma(q).
\qedhere
\]
\end{proof}

The fundamental group \( \pi_1(W^\sm, w) \) is finitely generated because \( W^\sm \) is a smooth quasi-projective complex algebraic variety.
Choose a finite symmetric generating set \( \Delta \) for \( \pi_1(W^\sm, w) \)
(\defterm{symmetric} means that \( g \in \Delta \Rightarrow g^{-1} \in \Delta \)).

\begin{corollary} \label{cayley-graphs-expand}
Assume that \( \Gamma \) is neat.

The Cayley graphs
\[ \Cay(\pi_1(W^\sm, w) / \pi_1(W_{q,i}^\sm, w_{q,i}), \Delta) \]
form an expander family as \( q \) runs over the \( b \)-th-power-free positive integers, and \( W_{q,i} \) runs over the connected components of \( W_{\Gamma(q)} \) for each \( q \).
\end{corollary}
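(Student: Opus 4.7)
The plan is to combine Lemma~\ref{monodromy-properties} with Theorem~\ref{sg-expansion} to reduce the statement to a super-approximation result for the monodromy subgroup \( \Gamma_W \subset \gGL_n(\bZ) \).

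First I would transfer the Cayley graphs appearing in the corollary to Cayley graphs of matrix quotients. Let \( \Delta_W \subset \Gamma_W \) be the image of \( \Delta \) under the composition \( f_x \circ \iota \colon \pi_1(W^\sm, w) \twoheadrightarrow \Gamma_W \) used in the proof of Lemma~\ref{monodromy-properties}; since \( \Delta \) is a finite symmetric generating set of \( \pi_1(W^\sm, w) \) and \( \Gamma_W \) is by definition the image of \( f_x \circ \iota \), the set \( \Delta_W \) is a finite symmetric generating set of \( \Gamma_W \). The group isomorphism in Lemma~\ref{monodromy-properties}(ii) is induced by \( f_x \circ \iota \), so it identifies
\[ \Cay\bigl(\pi_1(W^\sm, w)/\pi_1(W_{q,i}^\sm, w_{q,i}),\, \Delta\bigr) \;\cong\; \Cay\bigl(\Gamma_W/(\Gamma_W \cap \Gamma(q)),\, \Delta_W\bigr). \]
Writing \( \pi_q \colon \gGL_n(\bZ) \to \gGL_n(\bZ/q\bZ) \) for the reduction map, we have \( \Gamma(q) = \ker \pi_q \cap \Gamma \), and hence the right-hand side equals \( \Cay(\pi_q(\Gamma_W), \pi_q(\Delta_W)) \). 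In particular this Cayley graph depends only on \( q \) and not on the choice of component \( W_{q,i} \), so it suffices to prove that \( \{ \Cay(\pi_q(\Gamma_W), \pi_q(\Delta_W)) \} \) is an expander family as \( q \) ranges over \( b \)-th-power-free positive integers.

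Second, I would verify the hypothesis of Theorem~\ref{sg-expansion} for \( \Gamma_W \) with generating set \( \Delta_W \): the identity component of the Zariski closure of \( \Gamma_W \) should be perfect. By Lemma~\ref{monodromy-properties}(i), this identity component is a connected normal subgroup of \( \gG^\der \). Since \( \gG \) is reductive, \( \gG^\der \) is semisimple, and every connected normal subgroup of a connected semisimple group is a (possibly trivial) product of simple factors, hence semisimple, hence perfect. Theorem~\ref{sg-expansion} then gives the desired expander property.

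There is essentially no substantive obstacle left in the present corollary: the difficult inputs—the monodromy theorem of André used in Lemma~\ref{monodromy-properties}, and the super-approximation theorem of Salehi Golsefidy—have already been invoked. The only bookkeeping is to note that the bijection of Lemma~\ref{monodromy-properties}(ii) is compatible with the generating set \( \Delta \) (which is automatic from its construction via \( f_x \circ \iota \)) and that all components \( W_{q,i} \) for a fixed \( q \) yield the same Cayley graph up to isomorphism.
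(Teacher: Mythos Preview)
Your proposal is correct and follows essentially the same approach as the paper: identify the quotient with \( \Gamma_W/(\Gamma_W \cap \Gamma(q)) \cong \pi_q(\Gamma_W) \) via Lemma~\ref{monodromy-properties}, observe that the identity component of the Zariski closure of \( \Gamma_W \) is a normal subgroup of the semisimple group \( \gG^\der \) and hence perfect, and apply Theorem~\ref{sg-expansion}. Your write-up simply makes explicit the bookkeeping (transport of the generating set, independence of the component \( W_{q,i} \)) that the paper leaves implicit.
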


\begin{proof}
Let \( \Gamma_W \subset \Gamma \) be the monodromy group of \( W^\sm \) as in \cref{monodromy-properties}.
The identity component of the Zariski closure of \( \Gamma_W \) is semisimple and therefore perfect.
Hence we can apply \cite[Theorem~1]{sg:super-approximation} (\cref{sg-expansion}) to \( \Gamma_W \) to obtain the desired conclusion.
\end{proof}

We have shown that the Cayley graphs associated with the irreducible components of \( W_{\Gamma(q)} \) form an expander family.
In order to use this, we also need the following consequence of Nori's strong approximation theorem which tells us that the number of irreducible components of \( W_{\Gamma(q)} \) is bounded.
This lemma, in combination with \cref{monodromy-properties}(i), is similar to a step in the proof of \cite[Theorem~5.1]{edixhoven-yafaev}.

\begin{lemma} \label{strong-approximation}
Let \( \gH \subset \gGL_n \) be a semisimple \( \bQ \)-algebraic subgroup.

Let \( \Gamma = \gH(\bQ) \cap \gGL_n(\bZ) \).
For each positive integer \( q \), let \( \Gamma(q) \) be the kernel of the natural map \( \Gamma \to \gGL_n(\bZ/q\bZ) \).

Let \( \Gamma_W \subset \Gamma \) be a finitely generated subgroup which is Zariski dense in \( \gH \).

Then \( [ \Gamma / \Gamma(q) : \Gamma_W / \Gamma_W \cap \Gamma(q) ] \) is bounded as \( q \) runs over all positive integers.
\end{lemma}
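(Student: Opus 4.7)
The plan is to deduce this from Nori's strong approximation theorem for Zariski-dense subgroups of semisimple $\bQ$-algebraic groups. First, observe that
\[
    [\Gamma/\Gamma(q) : \Gamma_W / (\Gamma_W \cap \Gamma(q))] = [\Gamma : \Gamma_W \, \Gamma(q)],
\]
so bounding this index as $q$ varies is equivalent to proving that the closure of $\Gamma_W$ in the congruence completion $\overline\Gamma \subset \gGL_n(\hat\bZ)$ is an open (hence finite-index) subgroup of $\overline\Gamma$.

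Next, I would apply Nori's theorem to $\Gamma_W \subset \gGL_n(\bZ)$, whose Zariski closure $\gH$ is connected semisimple (so generated by its one-parameter unipotent subgroups and possessing no nontrivial character). Nori's theorem produces a finite set $T$ of primes such that, for each $p \notin T$, the reduction of $\Gamma_W$ modulo $p$ equals $\gH(\mathbb{F}_p)^+$, the subgroup of $\gH(\mathbb{F}_p)$ generated by its one-parameter unipotent subgroups; moreover $[\gH(\mathbb{F}_p) : \gH(\mathbb{F}_p)^+]$ is bounded uniformly in $p$. Lifting through the pro-$p$ kernel of $\gH(\bZ_p) \to \gH(\mathbb{F}_p)$, using that the $p$-adic Lie algebra of $\gH$ is generated by the image of $\Gamma_W$, yields a uniform bound on the index at every prime power $p^e$ with $p \notin T$. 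For the finitely many primes $p \in T$, the Zariski-density of $\Gamma_W$ in the $p$-adic Lie group $\gH(\bQ_p)$ forces its closure in $\gH(\bZ_p)$ to be open, so the index in $\Gamma/\Gamma(p^e)$ is bounded uniformly in $e$ (with a constant depending on $p$, which is harmless since $T$ is finite).

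The main obstacle is combining these local bounds into a single bound uniform in $q$. The Chinese Remainder embedding $\Gamma/\Gamma(q) \hookrightarrow \prod_{p^e \| q} \Gamma/\Gamma(p^e)$ by itself yields only a product of local indices, which could in principle grow with the number of prime factors of $q$. The resolution is a Goursat-type argument: for distinct good primes $p_1 \neq p_2$ above a threshold, the quasi-simple composition factors of $\gH(\mathbb{F}_{p_1})^+$ and $\gH(\mathbb{F}_{p_2})^+$ are pairwise non-isomorphic, so the image of $\Gamma_W$ in $\gH(\mathbb{F}_{p_1})^+ \times \gH(\mathbb{F}_{p_2})^+$ is the whole product (up to a bounded abelian discrepancy). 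This is precisely the content of the global form of Nori's theorem (compare Matthews--Vaserstein--Weisfeiler), and it furnishes the required bound uniform in $q$.
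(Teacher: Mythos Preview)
Your approach is essentially the same as the paper's: reduce the statement to showing that the closure of \( \Gamma_W \) in \( \overline{\Gamma} \subset \gGL_n(\hat\bZ) \) is open, and then invoke Nori's strong approximation theorem. The paper carries this out more tersely: it cites Nori's theorem directly in the form ``\( \overline{\Gamma_W} \) is open in \( \overline{\Gamma} \)'' (with a pointer to the argument in Edixhoven--Yafaev), extracts a single integer \( m \) with \( \Gamma(m) \subset \overline{\Gamma_W} \), and concludes \( [\Gamma/\Gamma(q) : \Gamma_W/(\Gamma_W\cap\Gamma(q))] \leq [\Gamma:\Gamma(m)] \) for all \( q \). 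Your prime-by-prime discussion and the Goursat-type argument are really a sketch of how Nori's theorem is proved rather than an alternative route; since you end by citing the global Nori/Matthews--Vaserstein--Weisfeiler statement anyway, you could streamline by invoking it at the outset as the paper does. (One phrasing to tighten: ``the \( p \)-adic Lie algebra of \( \gH \) is generated by the image of \( \Gamma_W \)'' is not quite the right formulation---what one uses is that a closed subgroup of \( \gH(\bZ_p) \) surjecting onto \( \gH(\bZ/p\bZ) \) is open, via a pro-\( p \) Frattini/Lie-algebra argument.)
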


\begin{proof}
Let \( \bar\Gamma \) denote the closure of \( \Gamma \) in \( \gGL_n(\hat\bZ) \) (with respect to the profinite topology), and similarly define \( \bar\Gamma_W \).
For each positive integer \( m \), let \( K(m) \) be the kernel of the natural map \( \gGL_n(\hat\bZ) \to \gGL_n(\bZ/m\bZ) \).

By \cite[Theorem~5.3]{nori:strong-approx} \( \bar\Gamma_W \) is open in \( \bar\Gamma \).
Note that \cite{nori:strong-approx} does not contain a proof of this theorem; an outline of a proof can be found at \cite[Remark~5.3]{edixhoven-yafaev}.
Hence there exists \( m \) such that \( \bar\Gamma \cap K(m) \subset \bar\Gamma_W \), and so
\[ \Gamma(m) \subset \bar\Gamma \cap K(m) \subset \bar\Gamma_W. \]

Therefore, for every \( q \in \bN \) and every \( \gamma \in \Gamma(m) \), the open set \( \gamma.K(q) \subset \gGL_n(\hat\bZ) \) intersects \( \Gamma_W \).
It follows that \( \gamma.\Gamma(q) \) is in the image of \( \Gamma_W \to \Gamma/\Gamma(q) \).

Therefore we have
\[ [ \Gamma / \Gamma(q) : \Gamma_W / \Gamma_W \cap \Gamma(q) ]  \leq  [ \Gamma : \Gamma(m) ] \]
for every \( q \), where \( m \) is independent of \( q \).
\end{proof}

Now we restrict to \( W \) being a curve, in order to apply \cite{ehk:expanders}.

\begin{proposition} \label{gonality-degree-bound}
Let \( W \subset S \) be an irreducible algebraic curve.

There exists a constant \( \newC{gonality-degree-multiplier} > 0 \) such that, for every congruence subgroup \( \Gamma' \subset \Gamma \) which contains \( \Gamma(q) \) for some \( b \)-th-power-free positive integer \( q \), each irreducible component \( Z \) of \( W_{\Gamma'} \) satisfies
\[ \gon_\bC(Z)  \geq  \refC{gonality-degree-multiplier} \, [\Gamma : \Gamma']. \]
\end{proposition}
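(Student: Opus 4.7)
The plan is to combine the expansion result (Corollary \ref{cayley-graphs-expand}) with the Ellenberg--Hall--Kowalski gonality bound (Theorem \ref{ehk-expanders}), convert the resulting fundamental-group index into an index of congruence subgroups via Lemma \ref{strong-approximation}, and finally descend from principal congruence level to the general $\Gamma'$ by a degree-of-morphism argument. First I would reduce to the case where $\Gamma$ is neat: replacing $\Gamma$ by a neat normal subgroup $\Gamma_0$ of finite index changes every index $[\Gamma : \Gamma']$ and the corresponding one inside $\Gamma_0$ by a factor bounded by $[\Gamma : \Gamma_0]$, which can be absorbed into $\refC{gonality-degree-multiplier}$.

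Next, I treat the case $\Gamma' = \Gamma(q)$ for $q$ a $b$-th-power-free positive integer. Fix a base point $w \in W^{\sm}$, a finite symmetric generating set $\Delta$ of $\pi_1(W^{\sm}, w)$, and lifts $w_{q,i}$ of $w$ to each component $W_{q,i}$ of $W_{\Gamma(q)}$. By Corollary \ref{cayley-graphs-expand}, the family of Cayley--Schreier graphs $\Cay(\pi_1(W^{\sm}, w)/\pi_1(W_{q,i}^{\sm}, w_{q,i}), \Delta)$ is an expander family. Applying Theorem \ref{ehk-expanders} to the smooth locus $W^{\sm}$ and its étale covers $W_{q,i}^{\sm}$ (noting that gonality is a birational invariant, so we may replace $W_{q,i}$ by its normalisation) gives
\[ \gon_\bC(W_{q,i}) \geq \refC{ehk-multiplier} \, [\pi_1(W^{\sm}, w) : \pi_1(W_{q,i}^{\sm}, w_{q,i})]. \]
By Lemma \ref{monodromy-properties}(ii), the right-hand index equals $[\Gamma_W : \Gamma_W \cap \Gamma(q)]$. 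The identity component of the Zariski closure of $\Gamma_W$ is semisimple by Lemma \ref{monodromy-properties}(i), so Lemma \ref{strong-approximation} (applied inside this algebraic hull, after intersecting $\Gamma$ with the relevant subgroup) supplies a constant $M$, independent of $q$, with $[\Gamma : \Gamma(q)] \leq M \cdot [\Gamma_W : \Gamma_W \cap \Gamma(q)]$. Combining yields $\gon_\bC(W_{q,i}) \geq (\refC{ehk-multiplier}/M) \, [\Gamma : \Gamma(q)]$.

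For the general case, let $Z$ be an irreducible component of $W_{\Gamma'}$, where $\Gamma' \supset \Gamma(q)$. The natural map $W_{\Gamma(q)} \to W_{\Gamma'}$ is finite and surjective, so some irreducible component $W_{q,i}$ of $W_{\Gamma(q)}$ dominates $Z$ by a finite morphism whose degree is bounded by $[\Gamma' : \Gamma(q)]$. Using the standard fact that, for a dominant morphism $X \to Y$ of smooth projective curves of degree $e$, one has $\gon_\bC(Y) \geq \gon_\bC(X)/e$ (pre-compose any map $Y \to \bP^1$ with the cover to bound $\gon_\bC(X)$ by $e \cdot \gon_\bC(Y)$), we obtain
\[ \gon_\bC(Z) \;\geq\; \frac{\gon_\bC(W_{q,i})}{[\Gamma' : \Gamma(q)]} \;\geq\; \frac{\refC{ehk-multiplier}}{M} \cdot \frac{[\Gamma : \Gamma(q)]}{[\Gamma' : \Gamma(q)]} \;=\; \frac{\refC{ehk-multiplier}}{M} \, [\Gamma : \Gamma'], \]
completing the proof with $\refC{gonality-degree-multiplier} = \refC{ehk-multiplier}/M$.

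The main obstacle is verifying that Lemma \ref{strong-approximation} applies cleanly to the monodromy group: one must identify $\Gamma_W$ as a Zariski-dense subgroup of its semisimple algebraic hull $\gH$ (using Lemma \ref{monodromy-properties}(i)) and then compare indices inside $\gH(\bQ) \cap \gGL_n(\bZ)$ with those inside $\Gamma$. The neatness reduction at the start, and the need to pass to smooth models to invoke Theorem \ref{ehk-expanders}, are routine bookkeeping but must be carried out carefully so that all the indices remain commensurable up to constants independent of $q$ and $\Gamma'$.
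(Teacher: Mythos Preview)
Your proposal is correct and follows essentially the same route as the paper: reduce to neat $\Gamma$, apply Ellenberg--Hall--Kowalski via the expansion Corollary~\ref{cayley-graphs-expand}, convert the fundamental-group index into $[\Gamma_W : \Gamma_W \cap \Gamma(q)]$ and then into $[\Gamma : \Gamma(q)]$ via Lemmas~\ref{monodromy-properties} and~\ref{strong-approximation}, and finally descend from $\Gamma(q)$ to $\Gamma'$ using the degree bound $\deg(W_{q,i} \to Z) \leq [\Gamma' : \Gamma(q)]$. The subtlety you flag about matching the hypotheses of Lemma~\ref{strong-approximation} to the monodromy group is glossed over in the paper as well, so your treatment is, if anything, slightly more careful.
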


\begin{proof}
Note first that we may replace \( \Gamma \) by a finite index subgroup \( \Gamma_0 \).
At the same time we replace \( W \) by an irreducible component of \( W_{\Gamma_0} \), \( \Gamma' \) by \( \Gamma' \cap \Gamma_0 \) (which contains \( \Gamma(q) \cap \Gamma_0 \)) and \( Z \) by an irreducible component of \( W_{\Gamma' \cap \Gamma_0} \).
These replacements change \( \gon_\bC(Z) \) and \( [\Gamma: \Gamma'] \) by bounded factors, so they do not affect the conclusion of the proposition.
Thus we may assume that \( \Gamma \) is neat.
This ensures that \( S_{\Gamma'} \to S \) is \'etale.

Consider the family of connected \'etale covers \( W_{q,i}^\sm \to W^\sm \), as \( q \) runs over the \( b \)-th-power-free positive integers and \( W_{q,i} \) runs over the connected components of \( W_{\Gamma(q)} \) for each \( q \).
Thanks to the expansion result of \cref{cayley-graphs-expand}, we can apply \cite[Theorem~8(b)]{ehk:expanders} (\cref{ehk-expanders}) to get
\[ \gon_\bC(W_{q,i}^\sm)
   \geq  \refC{ehk-multiplier} \, [ \pi_1(W^\sm, w) : \pi_1(W_{q,i}^\sm, w_{q,i}) ].
\]

Let \( \Gamma_W \subset \Gamma \) be the monodromy group of \( W^\sm \) as in \cref{monodromy-properties}.
By \cref{monodromy-properties,strong-approximation} we have
\[ [ \pi_1(W^\sm, w) : \pi_1(W_{q,i}^\sm, w_{q,i}) ]
   =  [\Gamma_W : \Gamma_W \cap \Gamma(q)]
   \geq  \frac{[\Gamma : \Gamma(q)]} {\refC{strong-approx-bound}}
\]
for some constant \( \newC{strong-approx-bound} > 0 \).

Given an irreducible component \( Z \) of \( W_{\Gamma'} \), choose a \( b \)-th-power-free positive integer~\( q \) such that \( \Gamma(q) \subset \Gamma' \).
Then some irreducible component \( W_{q,i} \) of \( W_{\Gamma(q)} \) maps onto~\( Z \).
We have
\[ \deg(W_{q,i} \to Z)  \leq  \deg(S_{\Gamma(q)} \to S_{\Gamma'})  =  [\Gamma' : \Gamma(q)]. \]

Noting that \( \gon_\bC(W_{q,i}^\sm) = \gon_\bC(W_{q,i}) \) because \( W_{q,i}^\sm \) is a dense open subset of~\( W_{q,i} \), we can combine the above inequalities to get
\[ \gon_\bC(Z)
   \geq  \frac{\gon_\bC(W_{q,i})} {\deg(W_{q,i} \to Z)}
   \geq  \frac{\refC{ehk-multiplier} \, [\Gamma : \Gamma(q)]} {\refC{strong-approx-bound} \, [\Gamma' : \Gamma(q)]}
   \geq  \frac{\refC{ehk-multiplier}} {\refC{strong-approx-bound}} \, [\Gamma : \Gamma'].
\qedhere
\]
\end{proof}

\subsection{Large Galois orbits}

Before proving our large Galois orbits results, we need a lemma relating Galois orbits of transcendental points of an algebraic curve to the gonality of the curve.
In this lemma and its proof, all fields are considered as subfields of \( \bC \) and algebraic closures are taken inside \( \bC \).

\begin{lemma} \label{galois-gonality-bound}
Let \( K \) be an algebraically closed subfield of \( \bC \).
Let \( L \) be a finitely generated extension of \( K \) inside \( \bC \) such that \( \trdeg(L/K) \geq 1 \).

There exists a constant \( \newC{galois-gonality-multiplier} > 0 \) depending only on \( K \) and \( L \) such that,
for every irreducible algebraic curve \( Z \) defined over \( K \) and every point \( s \in Z(\Lalg) \setminus Z(K) \),
\[ \# (\Aut(\bC/L) \cdot s)  \geq  \refC{galois-gonality-multiplier} \, \gon_\bC(Z). \]
\end{lemma}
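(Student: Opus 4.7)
The plan is to show $\gon_\bC(Z) \leq d \cdot [L(s):L]$ for a constant $d$ depending only on $K$ and $L$; since $\#(\Aut(\bC/L) \cdot s) = [L(s):L]$ for $s \in \Lalg$, this yields the conclusion with $\refC{galois-gonality-multiplier} = 1/d$. The first step is to reduce to the case that $L$ is purely transcendental over $K$. I would fix once and for all a transcendence basis $t_1, \dotsc, t_r$ of $L/K$ and set $L' = K(t_1, \dotsc, t_r)$, so that $d := [L:L']$ is finite and depends only on $K$ and $L$. The tower $L' \subset L \subset L(s)$ gives $[L(s):L] \geq [L'(s):L']/d$, so it suffices to prove $[L'(s):L'] \geq \gon_\bC(Z)$ in the purely transcendental case.

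Assuming henceforth $L = K(t_1, \dotsc, t_r)$, I would view $L$ as the function field of $Y := \bA^r_K$. The $\Lalg$-point $s$ together with the generic point $\mathrm{Spec}(L) \to Y$ combine to a morphism $\mathrm{Spec}(L(s)) \to Z \times_K Y$ whose image point has residue field $L(s)$. Let $W \subset Z \times Y$ denote the Zariski closure of this point. Then $\dim W = r$, the projection $W \to Y$ is generically finite of degree $n := [L(s):L]$, and $W \to Z$ is dominant: otherwise $W \to Z$ would factor through a closed point of $Z$, which would be $K$-rational because $K$ is algebraically closed, contradicting $s \notin Z(K)$.

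Next I would cut $W$ with a sufficiently generic affine line $\ell \subset \bA^r$ (defined over $K$) and set $W_\ell := W \cap (Z \times \ell)$. Since $W \to Y$ is finite of degree $n$, so is $W_\ell \to \ell$. For the dominance of $W_\ell \to Z$: the generic fibre $W_z$ of $W \to Z$ has dimension $r-1$ in $\bA^r$, so for a generic direction vector $v \in K^r$ the sum $W_z + \bA^1 v$ equals $\bA^r$ for all $z$ in a dense open subset of $Z$; then every line $\ell$ in direction $v$ meets $W_z$ for such $z$, ensuring $W_\ell \to Z$ is dominant. Compactifying $\ell$ to $\bP^1$ and passing to the closure gives a curve $\bar W_\ell \subset Z \times \bP^1$ whose fibres over $\bP^1$ form a $\bP^1$-family of effective degree-$n$ divisors on $Z$, all linearly equivalent (since rational equivalence of points on $\bP^1$ pulls back to linear equivalence on $Z$). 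This is a $g^1_n$ on $Z$ (possibly with base points), yielding a morphism $Z \dashrightarrow \bP^1$ of degree at most $n$, so $\gon_\bC(Z) \leq n$.

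The main technical care lies in the generic-line argument: once the direction $v$ is chosen to avoid tangency with the generic fibre $W_z$ (a Zariski-open condition which is non-empty because $K$ is algebraically closed and hence infinite), the dominance conclusion holds for every line in direction $v$. Two minor preliminaries are the identifications $\#(\Aut(\bC/L) \cdot s) = [L(s):L]$ (valid for $s \in \Lalg$ in characteristic zero) and $\gon_\bC(Z) = \gon_K(Z)$ (valid because $K$ is algebraically closed, by spreading out and specialising a minimal-degree map defined over $\bC$ to one defined over $K$), so that the $K$-rational map $Z \dashrightarrow \bP^1$ constructed above indeed bounds $\gon_\bC(Z)$.
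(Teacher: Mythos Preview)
Your argument is correct and arrives at the same constant, but takes a more geometric route than the paper. The paper avoids both the spreading-out to $\bA^r$ and the generic-line cut by a field-theoretic reduction: it picks the least $m$ for which $s$ is algebraic over $K(t_1,\dotsc,t_m)$, sets $M=K(t_1,\dotsc,t_{m-1})$, and observes that $M(t_m,s)$ then has transcendence degree exactly one over $M$, hence is already the function field of a curve $Y$ over $M$---no cutting required. Evaluation at $s$ (well-defined because $s\notin Z(\overline{M})$ by minimality of $m$) gives a dominant $M$-rational map $Y\to Z$, and the inclusion $M(t_m)\hookrightarrow M(t_m,s)$ gives a map $Y\to\bP^1_M$ of degree $[M(t_m,s):M(t_m)]$; one then finishes by quoting the standard fact that gonality is non-increasing under dominant maps of curves. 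This is shorter and sidesteps your Bertini-type line argument as well as your Picard/linear-equivalence step (the latter is essentially a direct proof of that gonality inequality in the case at hand---you could equally have noted that the component $C\subset W_\ell$ dominating $Z$ already gives $\gon(Z)\leq\gon(C)\leq\deg(C\to\bP^1)\leq n$). Your approach has the mild advantage of working entirely over the algebraically closed base $K$ rather than the non-closed field $M$, and your bound $\gon_\bC(Z)\leq[L'(s):L']$ is at least as sharp as the paper's $\gon_\bC(Z)\leq[M(t_m,s):M(t_m)]$, since adjoining the remaining transcendentals $t_{m+1},\dotsc,t_r$ can only decrease the degree of $s$.
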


\begin{proof}
Choose a transcendence basis \( t_1, \dotsc, t_r \) for \( L/K \).
We make this choice once, independent of \( Z \) and \( s \).
Then \( [L:K(t_1, \dotsc, t_r)] \) is a finite constant.

Because \( s \in Z(\Lalg) \setminus Z(K) \), there exists \( m \in \{ 1, \dotsc, r \} \) such that \( s \) can be defined over an algebraic extension of \( K(t_1, \dotsc, t_m) \) but not over an algebraic extension of \( K(t_1, \dotsc, t_{m-1}) \).
Let \( M = K(t_1, \dotsc, t_{m-1}) \).
Let \( M(t_m, s) \) denote the smallest extension of \( M(t_m) \) inside \( \Mtmalg \) over which \( s \) is defined.
By Galois theory,
\[ [M(t_m, s):M(t_m)] = \# (\Gal(\Mtmalg/M(t_m)) \cdot s). \]

Consider the restriction maps
\[ \Aut(\bC/K(t_1, \dotsc, t_r)) \to \Gal(\overline{K(t_1, \dotsc, t_r)}/K(t_1, \dotsc, t_r)) \to \Gal(\Mtmalg/M(t_m)). \]
The first is surjective because \( \bC \) is algebraically closed and the second is surjective because \( K(t_1, \dotsc, t_r) = M(t_m, \dotsc, t_r) \) is a regular extension of \( M(t_m) \).
Therefore
\begin{align*}
    \# (\Gal(\Mtmalg / M(t_m)) \cdot s)
  & =  \# (\Aut(\bC/K(t_1, \dotsc, t_r)) \cdot s)
\\& \leq  [L:K(t_1, \dotsc, t_r)] \, \# (\Aut(\bC/L) \cdot s).
\end{align*}

Let \( M(Z) \) denote the field of rational functions on \( Z \) with values in \( M \).
Because \( Z \) is a curve, the locus of indeterminacy of any rational function in \( M(Z) \) is an \( M \)-scheme of dimension zero.
Since \( s \) is not defined over an algebraic extension of \( M \), we deduce that \( s \) is not contained in the locus of indeterminacy of any element of \( M(Z) \).
Therefore ``evaluation at \( s \)'' gives a well-defined field homomorphism \( M(Z) \to M(t_m, s) \).

Since \( M(t_m, s) \) has transcendence degree~\( 1 \) over \( M \), it is isomorphic to the field of rational functions of some irreducible curve \( Y \) over \( M \).
The field homomorphism \( M(Z) \to M(t_m, s) \) corresponds to a dominant \( M \)-rational map \( Y \to Z \).
By \cite[Proposition~1.1(vii)]{poonen:gonality}, the existence of such a map implies that
\[ \gon_M(Z)  \leq  \gon_M(Y). \]

The inclusion \( M(t_m) \to M(t_m, s) \) corresponds to a dominant \( M \)-rational map \( Y \to \bP^1_M \), so by the definition of gonality,
\[ \gon_M(Y)  \leq  [M(t_m, s) : M(t_m)]. \]

Combining the above inequalities and equations, we get
\[ \gon_M(Z)  \leq  [M(t_m, s) : M(t_m)]  \leq  [L:K(t_1, \dotsc, t_r)] \, \# (\Aut(\bC/L) \cdot s). \]
Since \( \gon_\bC(Z)  \leq \gon_M(Z) \) and \( [L:K(t_1, \dotsc, t_r)] \) is constant, this proves the lemma.
\end{proof}

\begin{proposition} \label{galois-orbits-transcendental}
Let \( (\gG, X) \) be a Shimura datum and let \( S = \Gamma \bs X^+ \) be an associated Shimura variety component.
Let \( b \) be a positive integer.

Let \( V \subset S \times S \) be an irreducible algebraic curve such that there exists an algebraically closed field \( K \subset \bC \) satisfying the hypotheses (i) and (ii) of \cref{zp-isog-transcendental}.
Suppose that \( V \) is not contained in any proper special subvariety of \( S \times S \), and that \( p_{1|V} \) is not constant.

Let \(L \) be a finitely generated field over which \( V \) is defined.

There exists a constant \( \newC{goh-transcendental-multiplier} > 0 \) depending only on \( S \), \( V \) and \( L \) such that,
for every \( \gamma \in \gG(\bQ)_+ \) such that \( \Gamma \cap \gamma^{-1} \Gamma \gamma \) contains \( \Gamma(q) \) for some \( b \)-th-power-free integer \( q \) and for every \( s \in V \cap T_\gamma \),
\[ \#(\Aut(\bC/L) \cdot s)  \geq  \refC{goh-transcendental-multiplier} \, [\Gamma : \Gamma \cap \gamma^{-1} \Gamma \gamma]. \]
\end{proposition}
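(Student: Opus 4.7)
My strategy combines the gonality lower bound from \cref{gonality-degree-bound} with the Galois-orbit-versus-gonality estimate of \cref{galois-gonality-bound}, after transferring the problem from $V \subset S \times S$ to its first projection. Let $W$ be the Zariski closure of $p_1(V)$ in $S$. Hypothesis~(i) says that $W$ is defined over the algebraically closed field $K$; since $p_{1|V}$ is non-constant, $W$ is an irreducible curve; and since $V$ is not contained in any proper special subvariety of $S \times S$, $W$ is not contained in any proper special subvariety of $S$ (otherwise $W \subset S_0 \subsetneq S$ would give $V \subset S_0 \times S$). Writing $\Gamma' = \Gamma \cap \gamma^{-1}\Gamma\gamma$, the congruence hypothesis on $\gamma$ lets me apply \cref{gonality-degree-bound} to $W$ and $\Gamma'$, yielding a constant $c_1 > 0$ (independent of $\gamma$) such that every irreducible component of $W_{\Gamma'} = W \times_S S_{\Gamma'}$ has complex gonality at least $c_1 [\Gamma : \Gamma']$.

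Next I identify $T_\gamma$ with $S_{\Gamma'}$ via the canonical morphism $\Psi \colon S_{\Gamma'} \to S \times S$, $[x] \mapsto ([x]_\Gamma, [\gamma x]_\Gamma)$. This morphism is defined over the reflex field of $(\gG, X)$, which is a number field and therefore contained in the algebraically closed subfield $K$; hence $\Psi$ is $K$-rational. For $s \in V \cap T_\gamma$ set $\tilde s = \Psi^{-1}(s) \in S_{\Gamma'}$; since $p_1(s) \in W$, we have $\tilde s \in W_{\Gamma'}$, and I take $Z$ to be the irreducible component of $W_{\Gamma'}$ through $\tilde s$. Because $W_{\Gamma'}$ is defined over the algebraically closed $K$, its irreducible components coincide with the geometrically irreducible ones and are all defined over $K$. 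Set $L_0 = L \cdot K$: hypothesis~(ii) forces $L \not\subset K$, which together with $K$ being algebraically closed in $\bC$ gives $\trdeg(L_0/K) \geq 1$, so $(K, L_0)$ satisfies the hypotheses of \cref{galois-gonality-bound}. Since $V \not\subset T_\gamma$ (otherwise $V$ would lie in a proper special subvariety), the scheme $V \cap T_\gamma$ is zero-dimensional and defined over a finitely generated subfield of $L_0$, so $\tilde s \in Z(\overline{L_0})$.

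Provided $\tilde s \notin Z(K)$, \cref{galois-gonality-bound} yields a constant $c_2 > 0$ with $\#(\Aut(\bC/L_0) \cdot \tilde s) \geq c_2 \gon_\bC(Z) \geq c_1 c_2 [\Gamma:\Gamma']$. Since $\Psi$ is $K$-rational and injective, the action of $\Aut(\bC/L_0)$ commutes with $\Psi$, so the orbits of $\tilde s$ and of $s$ under this group have the same cardinality. Combined with the containment $\Aut(\bC/L_0) \subseteq \Aut(\bC/L)$, this yields
\[
\#(\Aut(\bC/L) \cdot s) \;\geq\; \#(\Aut(\bC/L_0) \cdot s) \;=\; \#(\Aut(\bC/L_0) \cdot \tilde s) \;\geq\; c_1 c_2 \, [\Gamma:\Gamma'],
\]
as required. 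The main obstacle is the excluded case $\tilde s \in Z(K)$, which by $K$-rationality and injectivity of $\Psi$ corresponds exactly to $s \in V(K)$. The set $V(K)$ is finite because $V$ is not defined over $K$, but each such $s$ can lie in $T_\gamma$ for $\gamma$ with unbounded $[\Gamma : \Gamma']$, so these exceptional points cannot be absorbed simply by shrinking the constant; they will need a direct auxiliary argument, or else the conclusion must be interpreted away from the finite set $V(K)$, which is still enough for the intended application to \cref{zp-isog-transcendental}.
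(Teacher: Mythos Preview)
Your proposal is correct and follows essentially the same route as the paper: identify $T_\gamma$ with $S_{\Gamma'}$, take the irreducible component $Z$ of $W \times_S S_{\Gamma'}$ through the point, apply \cref{galois-gonality-bound} to bound the Galois orbit by $\gon_\bC(Z)$, and finish with \cref{gonality-degree-bound}. The paper works with $s$ sitting directly in $T_\gamma \subset S \times S$ rather than pulling back through your $\Psi$, but this is cosmetic; your introduction of $L_0 = L\cdot K$ is in fact a bit more careful than the paper, which tacitly applies \cref{galois-gonality-bound} with $(K,L)$. On the obstacle you flag, the paper does exactly what you propose in your final sentence: it opens by observing that $V(K)$ is finite (since $V$ is not defined over $K$) and then declares ``we can ignore finitely many points while proving the proposition,'' i.e.\ the stated inequality is asserted only away from $V(K)$, which is all that is needed for \cref{zp-isog-transcendental}.
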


\begin{proof}
By hypothesis~(ii) from \cref{zp-isog-transcendental}, \( V \) is an algebraic curve not defined over \( K \).
Hence \( V \) contains only finitely many points defined over \( K \).
We can ignore finitely many points while proving the proposition, so we assume that \( s \) is not defined over \( K \).

Let \( W \) denote the Zariski closure of \( p_1(V) \) in \( S \).
By hypothesis~(i) from \cref{zp-isog-transcendental}, \( W \) is defined over \( K \).
Since \( p_{1|V} \) is not constant, \( \dim W = 1 \).

We interpret the Hecke correspondence \( T_\gamma \) in two ways.
Firstly, it is by definition a subset of \( S \times S \).
Secondly, it is isomorphic to the Shimura variety component \( (\Gamma \cap \gamma^{-1} \Gamma \gamma) \bs X^+ \), and the natural map \( (\Gamma \cap \gamma^{-1} \Gamma \gamma) \bs X^+ \to S \) is the same as the restriction to \( T_\gamma \) of \( p_1 \colon S \times S \to S \).
Therefore
\[ W \times_S T_\gamma = p_1^{-1}(W) \cap T_\gamma. \]
Consequently \( V \cap T_\gamma \subset W \times_S T_\gamma \) (where the fibre product notation \( \times_S \, T_\gamma \) refers to the projection \( p_{1|T_\gamma} \colon T_\gamma \to S \)).

Choose an irreducible component \( Z \subset W \times_S T_\gamma \) such that \( s \in Z \).
Since \( W \) is defined over \( K \) and \( T_\gamma \) is defined over \( \Qalg \subset K \), \( Z \) is defined over \( K \).
Because \( p_{1|T_\gamma} \colon T_\gamma \to S \) is a finite open map in the complex topology, \( \dim Z = 1 \).

Since \( V \) is Hodge generic in \( S \times S \), it is not contained in \( T_\gamma \).
Since \( V \) is an irreducible algebraic curve, it follows that \( V \cap T_\gamma \) is finite.
Since \( V \) is defined over \( L \) and \( T_\gamma \) is defined over \( \Qalg \), the intersection \( V \cap T_\gamma \) is defined over \( \Lalg \).
We conclude that \( s \in Z(\Lalg) \).

Therefore we can apply \cref{galois-gonality-bound} to \( s \in Z(\Lalg) \setminus Z(K) \).
We get
\[ \#(\Aut(\bC/L) \cdot s)  \geq  \refC{galois-gonality-multiplier} \, \gon_\bC(Z). \]
By \cref{gonality-degree-bound} we have
\[ \gon_\bC(Z)  \geq  \refC{gonality-degree-multiplier} \, [\Gamma \colon \Gamma \cap \gamma^{-1} \Gamma \gamma]. \]
Combining these inequalities proves the proposition.
\end{proof}

Using the notation of \cref{zp-isog-transcendental}, for each point \( (s_1, s_2) \in \Sigma \) let \( N(s) \) denote the smallest \( b \)-th-power-free positive integer \( n \) such that there exists a polarised isogeny \( A_{s_1} \to A_{s_2} \) of degree~\( n \).

\begin{corollary} \label{ag:galois-orbits-transcendental}
Let \( \Sigma \subset \Ag \times \Ag \) be the set defined in \cref{zp-isog-transcendental}.

Let \( V \subset \Ag \times \Ag \) be a curve satisfying the hypotheses of \cref{zp-isog-transcendental}.
Suppose that \( p_{1|V} \) is not constant.

Let \( L \) be a finitely generated field over which \( V \) is defined.

There exists a constant \( \newC{goh-ag-transcendental-multiplier} > 0 \) depending only on \( V \) and \( L \) such that,
for every \( s \in V \cap \Sigma \),
\[ \#(\Aut(\bC/L) \cdot s)  \geq  \refC{goh-ag-transcendental-multiplier} N(s)^{1/g}. \]
\end{corollary}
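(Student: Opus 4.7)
The plan is to pick a well-chosen matrix $\gamma$ representing $s$ and then combine \cref{galois-orbits-transcendental} with \cref{ag:index-complexity-bound}. The former gives a Galois orbit bound proportional to $[\Gamma : \Gamma \cap \gamma^{-1}\Gamma\gamma]$, while the latter lets us replace this index by $\nu(\gamma)$ provided the entries of $\gamma$ are coprime. Thus the task reduces to exhibiting such a $\gamma$ with $\nu(\gamma) = N(s)^{1/g}$.

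Fix $s = (s_1, s_2) \in V \cap \Sigma$ and choose a polarised isogeny $f \colon (A_{s_1}, \lambda_{s_1}) \to (A_{s_2}, \lambda_{s_2})$ of minimum $b$-th-power-free degree, namely $N(s)$. By \cref{ag:isogeny-to-hecke} this produces $\gamma \in \gGSp_{2g}(\bQ)_+ \cap \rM_{2g}(\bZ)$ with $s \in T_\gamma$ and $\det \gamma = N(s)$. Since the polarisations are principal, the standard degree computation gives $\deg f = \nu(\gamma)^g$, so $\nu(\gamma) = N(s)^{1/g}$ is a positive integer.

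The crux of the argument is verifying that the entries of $\gamma$ are coprime. Suppose instead that the gcd were some $d > 1$ and write $\gamma = d\gamma'$ with $\gamma' \in \rM_{2g}(\bZ)$. Evaluating the standard symplectic form on a basis pair with $\psi(e_i, e_{g+i}) = 1$ forces $\nu(\gamma') = \nu(\gamma)/d^2$ to be an integer, so $d^2$ divides $\nu(\gamma)$. The matrix $\gamma'$ then represents a polarised isogeny $f' \colon A_{s_1} \to A_{s_2}$ of degree $(\nu(\gamma)/d^2)^g$. Comparing $p$-adic valuations, using that $d^2 \mid \nu(\gamma)$ and $N(s) = \nu(\gamma)^g$ is $b$-th-power-free, shows $\deg f'$ is also $b$-th-power-free; but $\deg f' < N(s)$, contradicting the minimality in the definition of $N(s)$. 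Hence $d = 1$.

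With a primitive $\gamma$ in hand, the remaining steps are routine. An adjugate computation shows that $\Gamma(N(s)) \subseteq \Gamma \cap \gamma^{-1}\Gamma\gamma$, and since $N(s)$ is $b$-th-power-free, \cref{galois-orbits-transcendental} (whose hypothesis that $p_{1|V}$ is non-constant is in force) applies with $q = N(s)$ to give a constant $C_1 > 0$ with $\#(\Aut(\bC/L) \cdot s) \geq C_1 [\Gamma : \Gamma \cap \gamma^{-1}\Gamma\gamma]$. Since $\gamma$ has coprime entries, \cref{ag:index-complexity-bound} yields $[\Gamma : \Gamma \cap \gamma^{-1}\Gamma\gamma] \geq \nu(\gamma) = N(s)^{1/g}$, and combining the two inequalities proves the corollary. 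The only substantive step is the primitivity argument, and its dependence on $b$-th-power-freeness explains why this hypothesis is built into the definition of $\Sigma$.
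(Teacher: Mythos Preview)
Your proof is correct and follows the same strategy as the paper: choose $\gamma$ with $\det\gamma=N(s)$, apply \cref{galois-orbits-transcendental}, and finish with \cref{ag:index-complexity-bound}. There are two minor differences worth noting. First, the paper takes $q=\nu(\gamma)=N(s)^{1/g}$ using \cref{symplectic-elem-div} (observing that this $q$ is $\lceil b/g\rceil$-th-power-free), whereas you take $q=N(s)$ via an adjugate argument; both are valid and lead to the same conclusion. Second, you explicitly verify the coprimality hypothesis of \cref{ag:index-complexity-bound} by a minimality argument, while the paper glosses over this point; your treatment is in fact more careful here, and the argument you give (that a common factor $d>1$ would produce a polarised isogeny of strictly smaller $b$-th-power-free degree $N(s)/d^{2g}$) is exactly what is needed to justify the paper's appeal to \cref{ag:index-complexity-bound}.
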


\begin{proof}
By \cref{ag:isogeny-to-hecke}, if \( s \in \Sigma \) then \( s \in T_\gamma \) for some \( \gamma \in \gGSp_{2g}(\bQ)_+ \cap \rM_{2g}(\bZ) \) with \( \det \gamma = N(s) \).
Using \cref{symplectic-elem-div}, we can see that \( \Gamma(q) \subset \Gamma \cap \gamma^{-1} \Gamma \gamma \) where \( q = \nu(\gamma) = N(s)^{1/g} \).
Because \( N(s) \) is \( b \)-th-power-free, \( q \) is \( \lceil b/g \rceil \)-th-power-free and so we can apply \cref{galois-orbits-transcendental} to all \( s \in V \cap T_\gamma \).
Combining this with \cref{ag:index-complexity-bound} proves the corollary.
\end{proof}

\begin{proof}[Proof of \cref{zp-isog-transcendental}]
The case in which \( p_{1|V} \) is constant is easily dealt with: let \( w \) be the image of \( p_{1|V} \).
By hypothesis~(i), \( w \in \Ag(K) \).
Then every abelian variety isogenous to \( A_w \) is defined over \( K \), so every point of \( V \cap \Sigma \) is defined over \( K \).
But by hypothesis~(ii), \( V \) is a curve not defined over \( K \), so \( V \cap \Sigma \) must be finite.

Otherwise, when \( p_{1|V} \) is not constant, we use \cref{zp-translates-conditional,ag:galois-orbits-transcendental}.
Let \( (\gG, X) \) be the Shimura datum \( (\gGSp_{2g}, \Hg^{\pm}) \times (\gGSp_{2g}, \Hg^{\pm}) \) and let \( (\gH, X_\gH) \subset (\gG, X) \) be the diagonal sub-Shimura datum.
Let
\[ \Omega = \{ (1, \gamma) \in \gG(\bQ)_+ : \gamma \in \gGSp_{2g}(\bQ)_+ \cap \rM_{2g}(\bZ) \text{ and } \det \gamma \text{ is \( b \)-th-power-free} \}. \]
For each \( \gamma \in \gGSp_{2g}(\bQ)^+ \), the special subvariety \( S_{\gH,(1,\gamma)} \) defined in \cref{zp-translates-setup} is equal to the Hecke correspondence \( T_\gamma \).
Thus by \cref{ag:isogeny-to-hecke}, \( \bigcup_{\gamma \in \Omega} S_{\gH,\gamma} \) is the same as the set \( \Sigma \) defined in \cref{zp-isog-transcendental}.
Furthermore the complexity functions \( N(s) \) defined in \cref{zp-translates-setup,ag:galois-orbits-transcendental} are the same.

\Cref{conj:galois-orbits} holds in this setting by \cref{ag:galois-orbits-transcendental} (which we can apply because \( p_{1|V} \) is not constant) and \cref{conj:field-of-definition} holds by \cref{ag:hecke-components-bound}.
Therefore we can apply \cref{zp-translates-conditional} to prove \cref{zp-isog-transcendental}.
\end{proof}

\bibliographystyle{amsalpha}
\bibliography{reduction3}

\end{document}